\documentclass[12pt,a4paper]{article}
\usepackage{amsmath, amsfonts, xifthen, latexsym, amssymb, amsthm, amscd, extarrows}
\usepackage{amsrefs,mdframed, etoolbox}
\usepackage{multirow}
\usepackage{tikz-cd}
\usepackage{authblk}

\usepackage[utf8]{inputenc}
\usepackage[T1]{fontenc}

\usepackage[margin=1.4in]{geometry}

\usepackage[shortlabels]{enumitem}
\usepackage{graphicx,xcolor}
\usepackage{url}
\usepackage{hyperref}
\hypersetup{colorlinks=true,citecolor=blue,filecolor=blue,linkcolor=blue,urlcolor=blue}
\usepackage[title]{appendix}
\usepackage[perpage]{footmisc}

\usepackage{tikz}
\usetikzlibrary{arrows}
\usepackage{subcaption}

\allowdisplaybreaks



\newtheorem{theorem}{Theorem}[section] 
\newtheorem{theorem*}{Theorem}

\newtheorem{lemma}[theorem]{Lemma}
\newtheorem{lemma*}[theorem*]{Lemma}
\newtheorem{claim}[theorem]{Claim}
\newtheorem{claim*}[theorem*]{Claim}

\newtheorem{prop}[theorem]{Proposition}
\newtheorem{prop*}[theorem*]{Proposition}

\newtheorem{corollary}[theorem]{Corollary}
\newtheorem{corollary*}[theorem*]{Corollary}

\theoremstyle{definition}
\newtheorem{definition}[theorem]{Definition}
\newtheorem{definition*}[theorem*]{Definition}
\newtheorem{question*}[theorem*]{Question}


\mdfdefinestyle{mdfex}{%
    linewidth=0.2em, 
    topline=false, 
    bottomline=false, 
    rightline=false,
    leftline=true
}

\newcommand{\newenv}[3]{%
    \newenvironment{#1}{%
        \begin{mdframed}[style=#3]
        \begin{#1internal}
    }{%
        \end{#1internal}
        \end{mdframed}
    }
}

\newenv{remark}{Remark}{mdfex}
\newenv{example}{Example}{mdfex}



\newcommand{\beq}[1][]{ 
    \ifthenelse{\isempty{#1}}{\begin{equation}}{\begin{equation}\label{#1}} 
}
\newcommand{\eeq}{\end{equation}}

\makeatletter
\newtheorem*{rep@theorem}{\rep@title}
\newcommand{\newreptheorem}[2]{%
\newenvironment{rep#1}[1]{%
 \def\rep@title{#2 \ref{##1}}%
 \begin{rep@theorem}}%
 {\end{rep@theorem}}}
\makeatother
\newreptheorem{theorem}{Theorem}
\newreptheorem{lemma}{Lemma}

\DeclareFontFamily{U}{mathx}{\hyphenchar\font45}
\DeclareFontShape{U}{mathx}{m}{n}{<-> mathx10}{}
\DeclareSymbolFont{mathx}{U}{mathx}{m}{n}
\DeclareMathAccent{\widebar}{0}{mathx}{"73}


\newcommand{\om}{{\omega}}

\renewcommand{\phi}{{\varphi}}


\newcommand{\one}{\mathbf{1}}

\newcommand\ZZ{\mathbb Z}
\newcommand\FF{\mathbb F}
\newcommand\RR{\mathbb R}

\newcommand\NN{\mathbb N}

\renewcommand\SS{\mathbb S}


\newcommand{\actson}{\curvearrowright}


\renewcommand{\cal}[1]{{\mathcal #1}}


\provideboolean{theno_parts}
\setboolean{theno_parts}{true}
\renewcommand{\part}[1][]{%
    \ifthenelse{\boolean{theno_parts}}{%
        \begin{enumerate}[wide,label=(\alph*),itemsep=1pt,topsep=0pt,#1]%
            \item
    }{%
        \item
    }
    \setboolean{theno_parts}{false}%
}
\newcommand{\trap}{%
    \ifthenelse{\boolean{theno_parts}}{}{%
        \end{enumerate}
    }
    \setboolean{theno_parts}{true}
}

\newcommand{\newop}[2]{%
    \expandafter\def\csname #1\endcsname{\operatorname{#2}}
}

\renewcommand\tilde{\widetilde}
\newcommand\Rel{\operatorname{Rel}}

\newop{Pow}{Pow}
\newop{im}{im}

\newop{subexp}{SubExp}

\newop{Pr}{\mathbb{P}}
\newop{E}{\mathbb{E}}
\newop{maxoutdeg}{max-outdeg}
\newop{maxdeg}{max-deg}
\newop{res}{res}

\newop{free}{free}
\newop{Good}{Good}
\newop{Spc}{Space}
\newop{Gal}{Gal}
\newop{Z}{Z}
\newop{SL}{SL}
\newop{SO}{SO}
\newop{cost}{cost}
\newop{Vol}{Vol}
\newop{Res}{Res}
\newop{Side}{Side}
\newop{refin}{refine}
\newop{ends}{ends}
\newop{levels}{levels}
\newop{Lifts}{Lifts}
\newop{len}{len}
\newop{Torso}{Torso}
\newop{Cyclic}{Cyclic}
\newop{Check}{Check}
\newop{maxlen}{maxlen}
\newop{inside}{inside}
\newop{outside}{outside}
\newop{Star}{Star}
\newop{rot}{rot}
\newop{col}{col}
\newop{proj}{proj}
\newop{edit}{edit}
\newop{graph}{graph}
\newop{Sides}{Sides}
\newop{cycs}{CycSides}
\newop{faces}{Faces}
\newop{espan}{span}
\newop{Bij}{Bij}
\newop{Cay}{Cay}
\newop{tw}{tw}
\newop{inte}{int}
\newop{Classes}{Classes}
\newop{Isom}{Isom}
\newop{Rep}{Rep}
\newop{levseq}{LevSeq}
\newop{comp}{Comp}
\newop{ClaRep}{ClaRep}
\newop{Inv}{Inv}
\newop{Aut}{Aut}
\newop{Adj}{Adj}
\newop{Seq}{Seq}
\newop{Sep}{Sep}
\newop{vx}{Vx}
\newop{cut}{Cut}
\newop{Fin}{Fin}
\newop{Thin}{Thin}
\newop{Bor}{Bor}


\setlength{\parskip}{0.7em}
\setlength{\parindent}{1em}

\title{Applications of tree decompositions and accessibility to treeability of Borel graphs}
\author{H\'ector Jard\'on-S\'anchez \thanks{\href{mailto:sanchez@math.uni-leipzig.de}{sanchez@math.uni-leipzig.de}}}
\affil{University of Leipzig}

\begin{document}
\maketitle
\begin{abstract}
A framework to handle tree decompositions of the components of a Borel graph in a Borel fashion is introduced, along the lines of Tserunyan's Stallings Theorem for equivalence relations \cite{Tse}. This setting leads to a notion of accessibility for Borel graphs, together with a treeability criterion. This criterion is applied to show that, in particular, Borel equivalence relations associated to Borel graphs with accessible planar connected components are measure treeable, generalising results of Conley, Gaboriau, Marks, and Tucker-Drob \cite{CGMT} and Timar \cite{Timar}. It is also proven that uniformly locally finite Borel graphs with components of bounded tree-width yield Borel treeable equivalence relations. Our results imply that p.m.p~countable Borel equivalence relations with measured property (T) do not admit locally finite graphings with planar components a.s.
\end{abstract} 
\tableofcontents

\section{Introduction}

Let $X$ be a standard Borel space and let $\cal G \subset X\times X$ be a Borel graph of bounded vertex degree. We recall that $\cal G$ induces a countable Borel equivalence relation $\Rel (\cal G)$ on its vertex space $X$ defined by letting $x\sim y$ for $x,y \in X$ if and only if $x$ and $y$ are in the same connected component of $\cal G$. Often, we will refer to connected components simply as \emph{components}.

Following \cite{CGMT}, we say that $\Rel (\cal G)$ is \emph{Borel treeable} if there exists an acyclic Borel graph $\cal T$ on $X$ such that $ \Rel (\cal G) = \Rel (\cal T)$. If $\mu$ is a Borel probability measure on $X$, we say that $\Rel (\cal G)$ is $\mu$\emph{-treeable} if there exists a Borel set $X_0 \subset X$ with $\mu (X_0) = 1$ such that the restriction $\Rel (\cal G)|_{X_0}$ is Borel treeable. Finally, $\Rel (\cal G)$ is \emph{measure treeable} if it is $\mu$-treeable for every Borel probability $\mu$ on $X$.

We recall that a graph is \emph{planar} if it is embeddable in $\RR^2$. The starting point of this paper is the following question. 

\begin{question*}\label{q1}
If $\cal G$ is a Borel graph with planar components, is $\Rel (\cal G)$ measure treeable? 
\end{question*}


Treeability has various applications in the theory of p.m.p.~Borel equivalence relations. Recall that a \emph{p.m.p.~countable Borel equivalence relation} is a pair $(R,\mu)$ such that $R$ is a countable Borel equivalence relation and $\mu$ is a Borel probability measure on the vertex space $X$ of $R$ such that $f_* \mu = \mu$ for every Borel bijection $f\colon X\rightarrow X$ such that $\graph (f) \subset R$. A \emph{graphing} of such $(R,\mu)$ is a Borel graph $\cal G$ such that $\Rel (\cal G) = R$ a.s. 

On the one hand, the cost of a treeable p.m.p.~Borel equivalence relation is attained, and hence can be computed, by its treeings \cite{Gab}. On the other hand, treeable p.m.p.~Borel equivalence relations are sofic \cite{EL}. This implies that graphings of treeable p.m.p.~Borel equivalence relations satisfy the Aldous-Lyons conjecture \cite{AL}. One of our motivations to study Question \ref{q1} is the open question of whether graphings with planar components satisfy the Aldous-Lyons conjecture.

 Here we will not give a full answer to Question \ref{q1}. The most general answer here offered to Question \ref{q1} is Theorem \ref{tma:planar}. Before we address its statement, let us present an implication of it. Following Thomassen and Woess \cite{Thom93}, we recall that a graph $G$ is \emph{(edge)-accessible} if there exists $k \in \NN$ such that every pair of distinct ends of $G$ can be separated by removing at most $k$ edges of $G$.

\begin{theorem*}[See Theorem \ref{tma:planar}]\label{tma:introplanac}
Let $\cal G$ be a locally finite Borel graph with planar accessible components. Then $\Rel (\cal G)$ is measure treeable.
\end{theorem*}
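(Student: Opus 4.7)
The plan is to combine the paper's Borel tree decomposition framework and treeability criterion with the classical structure theorem of Dunwoody--Thomassen--Woess for accessible graphs, and then reduce the remaining ``pieces'' to the already-known measure treeability results for $1$-ended planar graphs. The overall picture is: accessibility cuts each component of $\cal G$ into $1$-ended planar chunks glued along uniformly small separators, and the paper's criterion turns such a decomposition into a global a.s.-treeing provided the chunks are treeable.

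First, I would apply (a Borel version of) the Thomassen--Woess structure theorem, in the form packaged by the paper's framework: since the components of $\cal G$ are accessible (witnessed by a uniform edge-cut bound $k \in \NN$), one obtains in a Borel manner a tree decomposition of the components of $\cal G$ whose adhesion sets are of uniformly bounded size (bounded by $k$) and whose torsos have at most one end a.s. This is exactly the input the paper's treeability criterion is designed to consume, and the technical cost of making the classical choices Borel is absorbed by the framework adapted from \cite{Tse}.

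Second, I would check that each torso in this decomposition is still a locally finite planar graph with at most one end. Planarity transfers to the pieces because the separators used by Thomassen--Woess are finite edge-cuts, and cutting a planar graph along a finite edge-cut produces planar pieces; the torsos only add a bounded amount of extra structure at the adhesion sets, which can be kept planar (up to harmless choices that do not affect the equivalence relation). At this point, for every piece the component-wise graph is a locally finite planar graph with at most one end. By the measure treeability results of Conley--Gaboriau--Marks--Tucker-Drob \cite{CGMT} and Timar \cite{Timar}, the equivalence relation generated on each such piece is measure treeable.

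Third, I would feed the Borel tree decomposition and the piece-wise treeings into the paper's treeability criterion: since the decomposition tree is itself a tree, and each torso has been replaced by a treeing of the restricted equivalence relation, one assembles a Borel acyclic graph generating $\Rel(\cal G)$ modulo a $\mu$-null set, for any given Borel probability $\mu$. Running this for an arbitrary $\mu$ yields measure treeability.

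The hard part, and the step where most of the work lives, is the first one: ensuring that the classical Thomassen--Woess decomposition can be produced Borelly with torsos that are genuinely $1$-ended (not merely ``no thick end'') and that the adhesion structure is uniformly controlled. The non-canonicity of the choice of separating edge-cuts is the usual obstruction to Borelness, and handling it is where the paper's adaptation of the Tserunyan-style Stallings framework is essential; once this is in place, the passage to treeability via \cite{CGMT,Timar} and the paper's criterion is comparatively mechanical.
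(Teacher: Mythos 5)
Your high-level plan coincides with the paper's: use accessibility to cut each component into one-ended planar pieces along a nested separation system of bounded order, tree the pieces, and reassemble via Theorem \ref{tma:maintech}. However, there is a genuine gap at the step where you claim the pieces are planar, and it is precisely the obstacle the paper's proof of Theorem \ref{tma:acces} is designed to overcome.

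You identify the pieces with the \emph{torsos} of the tree decomposition and assert that ``the torsos only add a bounded amount of extra structure at the adhesion sets, which can be kept planar.'' This is false in general. The torso of a part adds a virtual clique on each adhesion set. With separations of order up to $k$, the torso contains copies of $K_{|S|}$, so as soon as some adhesion set has $|S|\ge 5$ the torso can contain $K_5$ and fail to be planar even though the original component is planar. A concrete instance: take $K_{2,4}$ with parts $\{a,b\}$ and $\{v_1,\dots,v_4\}$, attach a ray at $a$ and a ray at $b$; this graph is planar, two-ended, and $4$-edge-accessible, but the torso of either side contains $K_5$. Thomassen's Lemma \ref{lem:margullo} saves the day only for $2$-separations; for the general accessibility bound $k$ it does not apply. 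This is exactly why the paper explicitly warns that ``the torsos of the tree decomposition \dots may not be minors of $\cal G$'' and does \emph{not} graph the levels equivalence relation $Q$ by torsos. Instead, the proof of Theorem \ref{tma:acces} builds a different Borel graph $\cal L\cup\cal A$ on the levels space, where the auxiliary edges $\cal A$ are introduced via branch sets $\alpha^{(S,B)}_{s_i}$ and declared only when an actual edge of $\cal G$ runs between two branch sets. This forces $\cal L\cup\cal A$ to be a \emph{minor} of $\cal G$, hence planar because $\cal P$ is minor-closed, and the end-separating property of $\cal S$ then yields one-endedness. Without this replacement of torsos by a minor model, the planarity you need to invoke \cite{CGMT} and \cite{Timar} simply does not hold.

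A secondary, smaller gap: you appeal directly to \cite{CGMT} and \cite{Timar} to tree the one-ended planar pieces, but \cite{CGMT} requires a \emph{Borel} $2$-basis, and Example \ref{ex:adams} in the paper shows that a Borel graph with one-ended planar components need not carry a Borel rotation system, let alone a Borel $2$-basis. This is why Theorem \ref{tma:one-end-plan} first passes to the Tutte decomposition (Example \ref{ex:Tuttemes}), then takes the $2$-to-$1$ Borel spherical extension of Proposition \ref{prop:ext} and deduces a Borel $2$-basis via Corollary \ref{cor:dosbase}, before \cite{CGMT} can be applied. That intermediate machinery is not ``comparatively mechanical'' and cannot be skipped.
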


We remark that the above theorem is a strict corollary of Theorem \ref{tma:planar}. In Example \ref{ex:freepakosanz} we provide an example of locally finite Borel graph $\cal G$ together with an invariant measure such that components of $\cal G$ are planar and not accessible a.s. Still $\cal G$ falls under the hypothesis of Theorem \ref{tma:planar}.

Let us now discuss how Theorem \ref{tma:introplanac} relate to previous results about Borel graphs with planar components in the literature. Conley, Gaboriau, Marks, and Tucker-Drob~\cite{CGMT}, proved that locally finite Borel graphs admitting a Borel 2-basis are measure treeable. By a theorem of Thomassen \cite{Thom80}, \emph{2-bases} of planar graphs correspond to sets of facial cycles in accumulation-free embeddings in $\RR^2$. A \emph{Borel 2-basis} for a Borel graph $\cal G$ is a 2-basis $\cal B$ of $\cal G$ which is Borel as a subset of the standard Borel space of finite subsets of $\cal G$. 

Example \ref{ex:adams} is an example of uniformly locally finite Borel graph with planar accessible components not admitting a 2-basis. This shows that Theorem 2 applies in a strictly more general context than the results of \cite{CGMT}. 

In \cite{Timar}, Timar showed that if $(\cal G,\mu)$ is a graphing with one-ended planar components and finite expected degree, then $\cal G$ is $\mu$-treeable. One-ended graphs satisfy the definition of accessibility vacuously, so this result is implied by Theorem \ref{tma:introplanac}. 

Our strategy to prove Theorem \ref{tma:introplanac} is, informally, to use tree decompositions to break down Borel graphs into pieces which we know how to tree, in the planar case, by the results of \cite{CGMT}. Then, the treeings in these pieces are glued back to obtain a treeing of the original Borel graph. This strategy applies in further generality and so in this paper, specially in Section 4 we offer a general treatment to it. Before we discuss it more precisely, let us just state another corollary of it.

Following \cite{Diestel05} we recall that two rays in a graph $G$ are \emph{equivalent} if for every finite $F \subset V(G)$ there are tails of both rays in the same connected component of $G \backslash F$. An \emph{end} of $G$ is an equivalence class of rays. An end $\omega$ is \emph{thick} if there exist infinitely many disjoint rays in $\omega$. 

Tree decompositions have a dedicated section in the preliminaries. Roughly speaking, a tree decomposition of a connected graph $G$ is a pair $(T,\cal V)$ such that $T$ is a tree and $\cal V = (V_t)_{t\in V(T)}$ is a family of vertex subset with $V_t \subset V(G)$ for every $t\in V(T)$ such that $T$ rules how the $V_t$ are arranged in $G$ and $G = \bigcup_{t\in V(T)} G[V_t]$.  

The \emph{width} of a tree decomposition $(T,\cal V)$ of $G$ is $\sup \{|V_t|-1 : t\in V(T)\}$. The \emph{tree-width} $\tw (G)$ of $G$ is the infimum of the tree-width of its tree decompositions. We say that a graph has \emph{bounded tree-width} if $\tw (G) < \infty$. 

\begin{theorem*}[See Corollary \ref{cor:bddtw}]\label{tma:bddtwintro}
Borel graphs with no thick ends and accessible components are Borel treeable. In particular, uniformly locally finite Borel graphs with bounded tree-width components are Borel treeable.
\end{theorem*}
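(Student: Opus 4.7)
The approach is to reduce Borel treeability to the existence of a Borel tree decomposition with finite bags, and then invoke the lemma that turns such a decomposition into a Borel treeing. I proceed in three steps.

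First, the \emph{in particular} clause follows from the main statement. If $\cal G$ is uniformly locally finite with components of bounded tree-width, then each component is accessible (an edge of its tree decomposition has adhesion of size at most $\tw + 1$, which together with local finiteness bounds the number of edges needed to separate ends on opposite sides) and has no thick ends (a thick end contains infinitely many pairwise disjoint rays, whose union yields arbitrarily large grid minors, contradicting bounded tree-width).

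Second, I need the graph-theoretic input that a connected locally finite graph is accessible and has no thick ends if and only if it admits a tree decomposition with finite bags. The non-trivial direction starts from a canonical tree decomposition of bounded adhesion (from accessibility) whose torsos have only thin ends (from no thick ends), and proceeds by iteratively refining torsos: each torso is locally finite with finitely many boundary vertices and only thin ends, so it admits a further tree decomposition of strictly simpler structure, and iteration terminates with finite bags.

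Third, I carry out this refinement Borelly using the tree decomposition framework developed earlier in the paper, obtaining a Borel tree decomposition of $\cal G$ with finite bags. I then apply the gluing lemma to convert it into a Borel treeing: fix a Borel linear order on $X$; root each component of the decomposition tree at the node whose bag contains the lex-minimal vertex; for the root bag include a canonical spanning tree of $\cal G[V_{\text{root}}]$; for each non-root bag $V_t$ with parent bag $V_s$, add a canonical forest inside $\cal G[V_t]$ that connects the ``new'' vertices $V_t \setminus V_s$ to the ``old'' vertices $V_t \cap V_s$. Each vertex thus receives a unique ``parent'' edge, and the tree structure of the decomposition prevents cycles from forming across bags, so the result is an acyclic Borel subgraph with the same components as $\cal G$.

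The main obstacle is the Borel implementation of the iterative refinement to finite bags. The canonical tree decomposition coming from accessibility has bags with only thin ends but possibly infinitely many, so a canonical recursive refinement is needed, whose depth may vary across components. Assembling this into a single Borel tree decomposition is the technical crux; the rest is either standard graph theory or absorbed into the gluing lemma.
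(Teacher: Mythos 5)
Your ``in particular'' deduction is correct and matches the paper, but the main argument does not, and it has two genuine gaps. First, the rooting in Step 3 is not Borel: picking ``the node whose bag contains the lex-minimal vertex'' requires selecting a distinguished vertex per $\cal G$-orbit, and for a non-smooth countable Borel equivalence relation with infinite classes this is impossible (a Borel linear order on a standard Borel space need not admit minima on infinite subsets, and a Borel choice of one point per orbit would imply smoothness of $\Rel(\cal G)$). Second, the graph-theoretic claim in Step 2 is both imprecise and unproven in the direction you need. The stated ``iff'' is literally false: every connected locally finite graph has a tree decomposition with finite bags (root a BFS and take the bags to be unions of two consecutive distance-spheres), including $\ZZ^2$, which has a thick end. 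The forward direction you actually use would need ``finite bags with bounded adhesion,'' and even then you explicitly flag that the Borel implementation of the iterative refinement --- with recursion depth varying across components --- is the unsolved crux. That flag is accurate; it is the missing piece.

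The paper avoids both problems by never aiming for finite bags. Since $\Thin$ is minor-closed (Halin's theorem), Theorem \ref{tma:acces} produces a nested Borel separation system of bounded order whose levels graph $\cal H$ has components in $\Thin_1$: at most one-ended, all ends thin, but possibly infinite bags. Theorem \ref{tma:thin} then shows $\Rel(\cal H)$ is Borel treeable via hyperfiniteness: an aperiodic Borel subforest $\cal F\subset\cal H$ has components with finitely many ends (each $\cal H$-component has a single thin end, hence only finitely many disjoint rays), so $\Rel(\cal F)$ is hyperfinite of finite index in $\Rel(\cal H)$, and hence $\Rel(\cal H)$ is hyperfinite. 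Theorem \ref{tma:maintech} then lifts treeability of the levels relation to $\Rel(\cal G)$ without ever choosing roots --- the construction of the treeing there is done locally through the separation system. If you did obtain a Borel tree decomposition with finite bags, the correct conclusion would also be to invoke Theorem \ref{tma:maintech} (finite levels give a finite, hence smooth and trivially treeable, levels relation) rather than to glue by hand with a rooted parent-pointer forest; but producing such a decomposition Borelly is precisely what your outline leaves open.
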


Moreover, our arguments show that one-ended Borel graphs with bounded tree-width components are, moreover, hyperfinite (see Corollary \ref{cor:hyper}).

Borel treeability for locally finite Borel graphs with bounded tree-width components has been proved, using independent methods, by  Ronnie Chen, Antoine Poulin, Ran Tao, and Anush Tserunyan. 

Let us now begin an outline of the paper's structure, introducing along the way our results in further generality. 

Section 2 is devoted to preliminaries. In Section 3, we adapt the graph-theorical theory of tree decompositions to the setting of Borel graphs. Let us fix a locally finite Borel graph $\cal G \subset X\times X$ where $X$ is a standard Borel space. We introduce \emph{nested Borel separation systems} to encode, in a Borel fashion, the information required to induce tree decompositions of the components of a Borel graph. Under the further assumption of having \emph{property ($\ast$)}, introduced in \cite{CHM}, such a nested Borel separation system $\cal S$ induces a tree decomposition $(\cal T_G , \cal V_G)$ on each component $G$ of $\cal G$. 

If $\cal G$ is a Borel graph and $\cal S$ a nested Borel separation system with property ($\ast$), then one can construct an auxiliary standard Borel space $Z$ with elements of the form $(x,t)$ where $x\in X$ and $t\in V(T)$ is such that $x\in V_t$. We define the \emph{levels equvialence relation associated to $\cal S$} to be the Borel equivalence relation $Q$ on $Z$ defined by $(x,t) \sim_Q (y,t')$ if and only if $t = t'$. We refer to the classes of $Q$ as \emph{levels}. Then, one can state the main result of Section 3. The measure $\mu_Z$ mentioned in the theorem below is just an auxiliary measure on $Z$ constructed in Section 3 related to a Borel measure $\mu$ on $X$.

\begin{theorem*}[See Theorem \ref{tma:maintech}]\label{tma:maintechintro}
Let $\cal G$ be a locally finite Borel graph on a standard Borel space $X$, let $\cal S$ be a nested Borel separation system with property ($\ast$), and let $\mu$ be a Borel probability measure on $X$. If $Q$ is Borel treeable (resp.~$\mu_Z$-treeable), then $\Rel (\cal G)$ is Borel treeable (resp.~$\mu$-treeable). In particular, if $Q$ is measure treeable, then $\Rel (\cal G)$ is measure treeable.
\end{theorem*}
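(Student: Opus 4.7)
The plan is to lift the given Borel (resp.~$\mu_Z$-measurable) treeing $\cal T_Q$ of $Q$ to a treeing of $\Rel(\cal G)$ by combining the bag-level trees encoded in $\cal T_Q$ with the tree structures $\cal T_G$ furnished by $\cal S$ under property $(\ast)$. A treeing of $Q$ amounts to a Borel spanning tree inside each bag $V_t$, so the task is to knit these into a single acyclic Borel graph on $X$ whose components coincide with those of $\cal G$.

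My candidate construction would proceed in two layers. First, for each $x \in X$, I would Borelly select a canonical ``home bag'' $t_x \in V(\cal T_G)$ from the subtree $T_x = \{t : x \in V_t\}$, yielding a Borel section $\phi \colon X \to Z$, $\phi(x) = (x, t_x)$. Property $(\ast)$ together with the nested structure of $\cal S$ should furnish enough canonical data on $\cal T_G$ (for instance, a Borel orientation of its edges, or a Borel choice of outgoing edge at each vertex of $T_x$) to perform this selection uniformly. Second, I would declare the edges of the candidate treeing $\cal T$ on $X$ to be of two kinds: \emph{intra-bag edges}, obtained by pulling back $\cal T_Q$ along $\phi$ so that $\{x : t_x = t\}$ is treed inside $V_t$ using $\cal T_Q \upharpoonright V_t$; and \emph{backbone edges}, one for each edge $tt' \in E(\cal T_G)$, chosen Borelly via $\cal T_Q$ to link $\{x : t_x = t\}$ to a vertex of the adhesion set $V_t \cap V_{t'}$, so that the bag trees hang off the skeleton $\cal T_G$.

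The verification breaks into three parts. Acyclicity: a cycle in $\cal T$ would either sit inside a single bag, contradicting $\cal T_Q$ being acyclic, or traverse several bags, inducing a nontrivial closed walk in $\cal T_G$, which is impossible. Component preservation: since every vertex and every edge of $G$ lies in some bag, the intra-bag and backbone edges together witness connectivity within each component of $\cal G$. Borel measurability is automatic from the construction, and the $\mu_Z$-treeable case follows by applying the same argument on a conull Borel subset of $X$ on which $\cal T_Q$ is a genuine treeing. The main obstacle is to perform the home-bag and backbone-edge selections Borelly in a way that keeps $\cal T$ acyclic; property $(\ast)$ is what I expect to be doing the heavy lifting for the first point, while for the second the standard trick is to restrict intra-bag edges to those incident to the ``new'' vertices of each bag (those not lying in the parent adhesion set), leaving the adhesion overlap to be handled entirely by the backbone. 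This local bookkeeping, standard in tree-decomposition arguments, is where most of the technical work will live.
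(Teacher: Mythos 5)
Your plan is structurally different from the paper's and, as written, has a real gap. The paper works entirely on the auxiliary space $Z$ first: it takes the given treeing $\cal Q$ of $Q$, adds the set $\cal W$ of ``vertical'' edges $((x,t),(x,t'))$ over adjacent bags, passes to a Borel transversal $\cal W'$ of the finite equivalence relation identifying vertical edges over the same tree-decomposition edge, and shows that $\cal Q\cup\cal W'$ is a treeing of the auxiliary relation $\tilde R$ on $Z$. Only then does it go down to $X$, by choosing a Borel section $f$ of $p\colon Z\to X$, applying Theorem \ref{tma gabo} (Gaboriau) to \emph{re-tree} $\tilde R$ on the complete section $Z_0 = f(X)$, and pushing that treeing forward along the injective map $p|_{Z_0}$. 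Crucially the argument never asks that a restriction of the original treeing to a subset of a bag stay connected; it uses Gaboriau's theorem precisely to avoid that.

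Your approach tries to shortcut this by first choosing a home bag $t_x$ for each $x$ and pulling back $\cal T_Q$ along the section $\phi(x)=(x,t_x)$. The problem is that this pullback only retains $\cal T_Q$-edges joining two vertices with the \emph{same} home bag, i.e.\ it restricts a spanning tree of the bag $V_t$ to the proper subset $\{x: t_x=t\}$. That restriction is a forest which in general is highly disconnected (the tree may be forced to route through vertices whose home bag is elsewhere). Adding a single backbone edge per edge of $\cal T_G$ cannot repair this in general: the number of components created can be much larger than the number of incident tree-decomposition edges, and moreover the adhesion-set vertex you link to may itself have a home bag different from either $t$ or $t'$, so the backbone edge need not join the two pieces you want. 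You hint at the ``new vertices'' trick, but that only addresses double-counting across bags, not the disconnection that results from deleting non-new vertices from the bag tree. The clean fix is exactly what the paper does: build the treeing on $Z$ where every copy $(x,t)$ is a genuine vertex, and defer the identification to a final application of Theorem \ref{tma gabo} on a complete section.

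A second, smaller gap is in the measure case. You propose to ``apply the same argument on a conull Borel subset of $X$ on which $\cal T_Q$ is a genuine treeing,'' but $\cal T_Q$ lives on $Z$, not $X$, and a $\mu_Z$-conull subset of $Z$ need not project to a $\mu$-conull subset of $X$. The auxiliary measure $\mu_Z$ is built in the paper specifically so that the $\tilde R$-saturation of the discarded null set has $\mu$-null image under $p$; this step needs to be carried out explicitly, not waved at.
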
 

Let us now remark that the above theorem is very closely related to Tserunyan's Theorem \cite[Corollary 4.20]{Tse}. Regarding Borel treeability of multi-ended Borel graphs, both results yield the same conclusions. However, our theorem includes the cases of $\mu$-treeability and measure treeability and drops the multi-ended condition. Beyond this, the framework of the auxiliary space $Z$ and the levels equivalence relation on it becomes helpful when giving a graph-based approach to the application of tree decompositions in the Borel setting.

In Section 4, we make precise the rough idea of a Borel graph admitting tree decompositions whose levels can be graphed with certain desired combinatorial properties. This is encoded in the following definition.

\begin{definition*}[See Definition \ref{def:boracc}]
Let $\cal F$ be a family of locally finite connected graphs. We say that a locally finite Borel graph $\cal G$ is \emph{Borel $\cal F$-accessible} if there exists a nested Borel separation system $\cal S \subset \Sep (\cal G)$ with property $(\ast)$ and a Borel graph $\cal H $ with $\Rel (\cal H) = Q$, where $Q$ is the levels equivalence relation associated to $\cal S$, such that $\cal H$-components are in $\cal F$.
\end{definition*}

The motivation to study Borel $\cal F$-accessibility comes from the following straightforward corollary of Theorem \ref{tma:maintechintro}. This theorem encapsulates the general strategy to prove treeability portrayed in this paper.

\begin{theorem*}\label{tma:Faccessibleintro}
Let $\cal F$ be a family of locally finite connected graphs. If every  Borel graph with components in $\cal F$ is Borel (resp.~measure) treeable, then so is any Borel $\cal F$-accessible locally finite graph. 
\end{theorem*}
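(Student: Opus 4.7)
The plan is to apply Theorem \ref{tma:maintechintro} directly, reducing the treeability question for $\cal G$ to that of its associated levels equivalence relation $Q$, which is in turn handled by the hypothesis on the family $\cal F$.

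First I would unfold the definition of Borel $\cal F$-accessibility for the given locally finite Borel graph $\cal G$: this supplies a nested Borel separation system $\cal S \subset \Sep(\cal G)$ with property $(\ast)$, together with a Borel graph $\cal H$ whose $\cal H$-components lie in $\cal F$ and which satisfies $\Rel(\cal H) = Q$, where $Q$ is the levels equivalence relation associated to $\cal S$. Feeding $\cal H$ into the hypothesis of the statement, since $\cal H$ is a Borel graph (on the auxiliary space $Z$) with all components in $\cal F$, the assumption yields that $\Rel(\cal H) = Q$ is Borel treeable (resp.\ measure treeable).

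Finally, I would invoke Theorem \ref{tma:maintechintro} applied to $\cal G$, $\cal S$, and $Q$. In the Borel case this is immediate: Borel treeability of $Q$ gives Borel treeability of $\Rel(\cal G)$. In the measure case, given an arbitrary Borel probability $\mu$ on $X$, I would pass to the associated auxiliary measure $\mu_Z$ on $Z$ from Section~3; measure treeability of $\cal H$ supplies $\mu_Z$-treeability of $Q$, and Theorem \ref{tma:maintechintro} then yields $\mu$-treeability of $\Rel(\cal G)$. Since $\mu$ is arbitrary, $\Rel(\cal G)$ is measure treeable. I do not anticipate a real obstacle: the statement is designed as a formal corollary of Theorem \ref{tma:maintechintro} combined with the definition of Borel $\cal F$-accessibility. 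The only subtlety worth verifying is that the clause ``every Borel graph with components in $\cal F$ is measure treeable'' applies to $\cal H$ on the auxiliary space $Z$, so that $\mu_Z$-treeability of $Q$ can indeed be extracted for each measure $\mu_Z$ arising from a $\mu$ on $X$ — but this is exactly what measure treeability for $\cal H$ asserts, as $\mu_Z$ is a Borel probability on $Z$.
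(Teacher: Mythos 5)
Your argument is correct and is precisely the route the paper intends: the paper states Theorem \ref{tma:Faccessible} without proof as a ``straightforward corollary'' of Theorem \ref{tma:maintech}, and your proposal simply unfolds the definition of Borel $\cal F$-accessibility to produce $\cal H$ with $\Rel(\cal H)=Q$, applies the hypothesis on $\Bor(\cal F)$ to get treeability of $Q$, and invokes Theorem \ref{tma:maintech}. The only cosmetic remark is that the body version of Theorem \ref{tma:maintech} already passes directly from measure treeability of $Q$ to measure treeability of $\Rel(\cal G)$, so you need not explicitly route through the auxiliary measure $\mu_Z$, though doing so is also harmless and consistent with the introduction's formulation.
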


The use of the term accessibility in the definition of Borel $\cal F$-accessibility is justified by the following result. Let $\cal F_1 \subset \cal F$ denote the subfamily of one-ended graphs in $\cal F$. 

\begin{theorem*}\label{tma:accesintro}
Let $\cal F$ be a minor-closed family of locally finite connected graphs. Then, Borel graphs with all components accessible and in $\cal F$ are Borel $\cal F_1$-accessible. 
\end{theorem*}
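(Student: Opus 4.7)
The plan is to realise in a Borel fashion the classical canonical tree-decomposition theorem for accessible graphs due to Dunwoody and Thomassen--Woess: every accessible locally finite connected graph $G$ admits a tree decomposition with finite adhesion whose torsos are either finite or one-ended. Since $\cal F$ is minor-closed and the torsos of a finite-adhesion tree decomposition are minors of $G$, whenever $G\in \cal F$ all torsos again lie in $\cal F$. After absorbing finite torsos into adjacent ones (which preserves minor-closedness by the same argument), the infinite torsos land in $\cal F_1$. This is the target structure that $\cal H$ should capture.

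The Borel implementation proceeds in two stages. First I would partition $X$ into countably many Borel pieces according to the accessibility constant $k$ of each component; the predicate ``the component of $x$ has accessibility constant at most $k$'' is a countable Boolean combination of Borel conditions asking for the existence of edge cuts of size at most $k$ separating prescribed vertex pairs, and is therefore Borel. Fixing $k$ on one piece, I would take $\cal S \subset \Sep (\cal G)$ to be the canonical Dunwoody system of tight (narrow) edge cuts of order at most $k$ in each component. Since each such cut is encoded by at most $k$ edges, $\cal S$ is a Borel subset of the standard Borel space of finite subsets of $\cal G$, and tight-cut theory guarantees that the resulting system is symmetric and nested. Crucially, the definition of being a tight $k$-cut is fully automorphism-invariant, so no non-Borel choice enters the selection of $\cal S$.

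With $\cal S$ in hand I would verify property $(\ast)$ from \cite{CHM}, thereby invoking the machinery of Section 3 to obtain the auxiliary space $Z$, the levels equivalence relation $Q$, and the induced tree decompositions $(\cal T_G, \cal V_G)$. Then I would define $\cal H$ on $Z$ by placing, at each level indexed by $t\in V(\cal T_G)$, the edge set of the torso of $G$ at the part $V_t$; these edges are locally computable from the bounded adhesion data, so $\cal H$ is Borel, and $\cal H$-components are precisely the torsos, which by the first paragraph lie in $\cal F_1$. The main obstacle I expect is the verification of property $(\ast)$ for the Dunwoody system, together with the bookkeeping for absorbing finite torsos. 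Property $(\ast)$ imposes combinatorial regularity beyond nestedness and symmetry, ensuring that the quotient tree faithfully records the bag structure; checking this uniformly across all components of all accessibility constants, while simultaneously arranging that every $\cal H$-component really lies in $\cal F_1$ rather than merely in $\cal F$, is where most of the technical work should concentrate.
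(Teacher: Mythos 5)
Your high-level plan matches the paper's: a Dicks--Dunwoody nested cut system of bounded order, built in a Borel way and fed into the Section~3 machinery, with the levels graph $\cal H$ read off from the resulting tree decomposition. But there is a concrete gap in the step where you pass from the tree decomposition to $\cal H$, and it is exactly the step the paper goes out of its way to handle differently.

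You claim that the torsos of a finite-adhesion tree decomposition of $G$ are minors of $G$, so minor-closedness of $\cal F$ forces the torsos into $\cal F$. This is false in general. A torso at a part $V_t$ places a complete graph on each adhesion set $S = V_t\cap V_{t'}$, and for that $K_{|S|}$ to be a minor of $G$ one needs the side of the separation beyond $t'$ to contain a $K_{|S|}$-minor with branch sets meeting each vertex of $S$. When $|S|\ge 3$ this can fail: already for $G=K_{2,3}$ the decomposition into the two stars has adhesion set the three degree-$2$ vertices, and the resulting torso is $K_4$, which is not a minor of $K_{2,3}$ (it has larger cycle rank). For adhesion $\le 2$, as in the block and Tutte decompositions, one does get (topological) minors, which is why this issue is invisible in Example~\ref{ex:Tuttemes}, but the present theorem needs cuts of arbitrary bounded order $\kappa$. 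The paper flags this explicitly in the proof of Theorem~\ref{tma:acces}: the torsos ``may not be minors of $\cal G$,'' so edges must be added to $\cal L$ ``in a slightly more cautious way.'' Your subsidiary step of ``absorbing finite torsos into adjacent ones'' leans on the same false premise and is also unnecessary, since the conclusion only requires components that are \emph{at most} one-ended.

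What the paper actually does is avoid the torsos altogether. Using a Borel linear order it partitions the far side of each separation $(S,B)\in\cal S$ into connected branch sets $\alpha^{(S,B)}_{s_i}$, one per adhesion vertex $s_i\in S$, and declares an $\cal A$-edge between $(x,t)$ and $(y,t)$ only when an actual $\cal G$-edge runs between the corresponding branch sets $\alpha_{(x,t)}$ and $\alpha_{(y,t)}$. This makes each component of $\cal L\cup\cal A$ a genuine contraction minor of the ambient $\cal G$-component, which is what minor-closedness of $\cal F$ buys; the proof then verifies separately that $\Rel(\cal L\cup\cal A)=Q$ and that components of $\cal L\cup\cal A$ have at most one end. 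To repair your proposal you need to replace ``take the torso'' with this branch-set construction (or an equivalent one that only retains virtual edges witnessed by paths in the opposite side), and then carry out the one-endedness argument rather than citing the finite-or-one-ended dichotomy for torsos, which $\cal H$ no longer consists of.
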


Section 4 concludes with an application of the two above theorems. Let $\Thin$ denote the class of locally finite connected graphs with no thick ends. Such family is minor-closed by Halin's Theorem \cite{Hal}. In Theorem \ref{tma:thin} we prove that Borel graphs with components in $\Thin_1$ are Borel treeable. Then, it follows automatically from Theorems \ref{tma:Faccessibleintro} and \ref{tma:accesintro} that Borel $\Thin_1$-accessible Borel graphs are Borel treeable. In particular we obtain Theorem \ref{tma:bddtwintro} above.

By a similar argument it follows that in order to prove Theorem \ref{tma:introplanac} it suffices to show the following. 

\begin{theorem*}[Theorem \ref{tma:one-end-plan}]\label{tma:one-end-plan-intro}
Let $\cal G$ be a locally finite Borel graph with planar components. If $\cal G$ is at most one-ended, then $\cal G$ is measure treeable.
\end{theorem*}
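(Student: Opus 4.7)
The plan is to reduce the locally-finite one-ended planar case to Timar's theorem \cite{Timar}, which establishes $\mu$-treeability for p.m.p.~graphings with one-ended planar components and finite expected degree. Fix an arbitrary Borel probability $\mu$ on $X$; the goal is to produce a Borel treeing of $\Rel (\cal G)$ on a $\mu$-conull set.

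First, I would dispose of finite components: the Borel $\Rel (\cal G)$-invariant set $X_{\mathrm{fin}}$ of vertices in finite $\cal G$-components supports a smooth subrelation (select the minimum of each finite class in a fixed Borel total order on $X$), hence is trivially Borel treeable. Restricting away from $X_{\mathrm{fin}}$, one may assume every component of $\cal G$ is one-ended and planar. Next, by standard machinery for countable Borel equivalence relations (the Hopf dichotomy between dissipativity and conservativity, with Nadkarni's theorem furnishing an equivalent invariant measure in the conservative case, and dissipativity implying $\mu$-smoothness hence trivial $\mu$-treeability), one may further assume that $\mu$ is $\Rel (\cal G)$-invariant, so that $(\cal G, \mu)$ is a graphing.

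The third and main step will be to reduce to finite expected degree. If $\int \deg_{\cal G}\, d\mu < \infty$, Timar's theorem applies directly. Otherwise, I would aim to construct a Borel spanning subgraph $\cal G' \subset \cal G$ with $\Rel (\cal G') = \Rel (\cal G)$, whose components remain one-ended and planar, and with $\int \deg_{\cal G'}\, d\mu < \infty$; applying Timar to $(\cal G', \mu)$ then delivers $\mu$-treeability of $\Rel (\cal G)$. A plausible construction of $\cal G'$ uses i.i.d.~$\mathrm{Unif}[0,1]$ edge labels together with a planarity-preserving spanning procedure (an analogue of the free minimal spanning forest), combined with pruning of high-weight edges at vertices of large degree in order to force integrability of $\deg_{\cal G'}$.

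The principal obstacle will be this last step: dropping edges can easily create additional ends in a planar graph, so any sparsification must respect the planar topology of each component in a Borel fashion. Overcoming this is precisely where the locally-finite-but-unbounded-degree regime demands new input beyond Timar's original argument, and is likely the technical heart of the proof.
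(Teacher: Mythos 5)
Your proposed reduction to Timar's theorem leaves the central step unresolved, and you explicitly flag it as such. That obstacle is genuine, not a technicality: it is not known, and not at all clear, that a one-ended locally finite planar Borel graph admits a Borel spanning subgraph of finite expected degree whose components remain one-ended (and planar). Dropping edges can easily create new ends, and no Borel, planarity-preserving, end-preserving sparsification procedure is available here. Since the paper's Theorem~\ref{tma:one-end-plan} is precisely a strengthening of Timar's result from finite expected degree to arbitrary locally finite degree, the sparsification you want is essentially as hard as the theorem itself; an i.i.d.\ edge-label device does not by itself resolve it. The preliminary reduction to an invariant $\mu$ via the Hopf/Nadkarni dichotomy also needs more care than you indicate (Nadkarni's theorem supplies an invariant probability only in the incompressible case, and compressibility does not simply give $\mu$-smoothness), though this is secondary to the sparsification gap.

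The paper's actual argument avoids both difficulties and goes in a different direction. It first applies the Borel Tutte decomposition of Example~\ref{ex:Tuttemes} together with Theorem~\ref{tma:maintech} to reduce to the case where $\cal G$ has one-ended $3$-connected planar components (the torsos stay planar by Lemma~\ref{lem:margullo}, locally finite, and at most one-ended). By Thomassen's theorem each such component has a $2$-basis, and the Whitney--Imrich rigidity of $3$-connected spherical graphs is exploited via the $2$-to-$1$ Borel spherical extension of Proposition~\ref{prop:ext} to upgrade this to a Borel $2$-basis (Corollary~\ref{cor:dosbase}). Measure treeability then follows from \cite[Theorem 3.6]{CGMT}, which is proved for arbitrary Borel $\mu$, so no invariant-measure reduction is required at all.
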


In particular, we can also now state our most general partial answer to Question \ref{q1} above. Let $\cal P$ denote the class of locally finite connected planar graphs. Then, as a consequence again of Theorems \ref{tma:Faccessibleintro} and \ref{tma:accesintro} we have the following.

\begin{theorem*}[Theorem \ref{tma:planar}]\label{tma:planarintro}
Borel $\cal P_1$-accessible Borel graphs are measure treeable. 
\end{theorem*}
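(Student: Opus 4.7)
The plan is to obtain the theorem as an immediate combination of the two results highlighted just before its statement. Recall that $\cal P_1$ denotes the subfamily of one-ended graphs in $\cal P$, the class of locally finite connected planar graphs. By definition, Borel $\cal P_1$-accessibility of a locally finite Borel graph $\cal G$ asserts the existence of a nested Borel separation system $\cal S$ with property $(\ast)$ whose associated levels equivalence relation $Q$ admits a Borel graphing $\cal H$ with components in $\cal P_1$.

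First, I would verify the hypothesis of Theorem \ref{tma:Faccessibleintro} for the family $\cal F = \cal P_1$: every locally finite Borel graph whose components lie in $\cal P_1$ is measure treeable. Indeed, graphs in $\cal P_1$ are by definition one-ended and planar, so in particular they are at most one-ended and planar, and Theorem \ref{tma:one-end-plan-intro} applies and yields measure treeability directly.

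Second, I would invoke Theorem \ref{tma:Faccessibleintro} itself with $\cal F = \cal P_1$. The hypothesis of that theorem is exactly what was just checked, so the conclusion is that every Borel $\cal P_1$-accessible (locally finite) Borel graph is measure treeable, which is the desired statement.

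The argument is essentially bookkeeping: the real work is encapsulated in the two inputs. Theorem \ref{tma:one-end-plan-intro} is where a measure treeing of one-ended planar components is actually produced (and where the one-ended hypothesis, rather than being vacuous as in Timar's setting, is used to carry out a construction in the spirit of \cite{CGMT} and \cite{Timar}), and Theorem \ref{tma:Faccessibleintro} is itself a formal consequence of the main technical result Theorem \ref{tma:maintechintro}, which transfers $\mu_Z$-treeability of the levels equivalence relation $Q$ to $\mu$-treeability of $\Rel(\cal G)$. The only point at which one must be careful is to note that local finiteness of the ambient Borel graph is built into the definition of Borel $\cal P_1$-accessibility, so the local finiteness hypothesis of Theorem \ref{tma:Faccessibleintro} is free; beyond this, there is no genuine obstacle.
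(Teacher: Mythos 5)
Your proposal is correct and matches the paper's route exactly: the paper derives Theorem \ref{tma:planar} by combining Theorem \ref{tma:one-end-plan} (measure treeability of Borel graphs with at most one-ended planar components, hence of graphs in $\Bor(\cal P_1)$) with Theorem \ref{tma:Faccessible}, and your write-up supplies precisely this bookkeeping. The only thing worth flagging is a minor parenthetical slip: what the paper calls ``vacuous'' in Timar's setting is the \emph{accessibility} condition (one-ended graphs are vacuously accessible), not the one-ended hypothesis itself; this does not affect the correctness of your argument.
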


Section 5 is devoted to the proof of Theorem \ref{tma:one-end-plan-intro}. This result is proven as an implication of the measure treeability result for Borel graphs with a Borel 2-basis in \cite{CGMT}. However, the application of the latter is not straightforward. Whilst we know that one-ended planar graphs admit a 2-basis, we do not know \emph{a priori} whether Borel graphs with one-ended planar components admit a Borel 2-basis or not. It is not known in full generality whether a Borel graph whose components admit a 2-basis has a Borel 2-basis. Here, we prove the following partial answer to the question.

\begin{corollary*}[Corollary \ref{cor:dosbase}]\label{cor:dosbaseintro}
Let $\cal G$ be a locally finite Borel graph with 3-connected planar components admitting a 2-basis. Then, $\cal G$ admits a Borel 2-basis.
\end{corollary*}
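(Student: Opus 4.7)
The approach exploits the fact that, for $3$-connected planar graphs, the $2$-basis is essentially canonical. By Whitney's uniqueness theorem for planar embeddings, every $3$-connected planar graph admits a spherical embedding unique up to reflection, so the collection of facial cycles is an isomorphism invariant of the abstract graph. By Tutte's peripheral cycles theorem, these facial cycles coincide with the \emph{peripheral cycles}, i.e., the induced non-separating cycles. Combined with Thomassen's theorem recalled in the excerpt, which identifies $2$-bases of planar graphs with the (finite) facial cycles of accumulation-free embeddings, one concludes that whenever a $3$-connected planar graph $G$ admits a $2$-basis, this $2$-basis must be precisely the set of finite peripheral cycles of $G$. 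The entire Borel content of the corollary is then to recognise this distinguished family in a Borel fashion.

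Accordingly, I would define
\[
\cal B := \bigl\{ F \in [\cal G]^{<\infty} : F \text{ is an induced non-separating cycle in its } \cal G\text{-component} \bigr\}
\]
and verify that (i) for each component $G$ of $\cal G$, the trace $\cal B \cap [G]^{<\infty}$ is a $2$-basis of $G$, and (ii) $\cal B$ is Borel in the standard Borel space $[\cal G]^{<\infty}$ of finite subsets of $\cal G$. Part (i) is immediate from the previous paragraph. For (ii), being a finite cycle and being chordless are conditions referring only to $V(F)$ and to the finitely many $\cal G$-edges among its vertices, hence Borel. For the non-separating condition, local finiteness of $\cal G$ implies that the boundary set $N(V(F))\setminus V(F)$ is finite, and an elementary observation (every component of $K\setminus V(F)$, where $K$ is the $\cal G$-component of $V(F)$, contains a boundary neighbour, obtained by following a shortest path to $V(F)$) shows that $F$ is non-separating in $K$ iff any two vertices of $N(V(F))\setminus V(F)$ are connected in $\cal G\setminus V(F)$. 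Since connectedness in a locally finite Borel graph is Borel as a countable disjunction of ``there exists a finite path of length $\le n$'', and remains so after removing a finite, Borel-parametrised vertex set, the overall condition is Borel.

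The main technical point I expect to be the obstacle is justifying the peripheral cycles characterisation in the infinite case: Tutte's theorem is classical for finite $3$-connected planar graphs, but for infinite components admitting an accumulation-free embedding one must separately argue that every finite induced non-separating cycle is facial in the (unique, by Whitney) embedding, and conversely that every facial cycle is finite, induced, and non-separating. This should follow either from a direct combinatorial argument exploiting Whitney-type rigidity and the $3$-connectedness at each stage, or by invoking the topological cycle space framework for infinite planar graphs (Bruhn--Stein, Diestel). Once this ingredient is secured, step (ii) above is routine Borel bookkeeping and the corollary follows.
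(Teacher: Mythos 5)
Your proposal takes a genuinely different route from the paper. The paper proves this corollary through Proposition~\ref{prop:ext}: it first constructs a Borel spherical $2$-to-$1$ extension $\cal H \to \cal G$ via an explicit $\ZZ_2$-valued cocycle built from the Borel rotation system of Lemma~\ref{prop:rota}, then reads the $2$-basis off as the finite facial cycles determined by that rotation system (well-defined by Imrich's theorem), and finally pushes this family down through the extension map. You bypass the extension entirely by observing that, for $3$-connected planar graphs admitting a $2$-basis, the $2$-basis is canonical --- the set of finite peripheral cycles --- and by recognising that family Borel-measurably inside $[\cal G]^{<\infty}$. Your Borel bookkeeping is sound: the finite-boundary observation and the reduction of ``non-separating'' to connectedness among the finitely many boundary neighbours is correct (each component of $K\setminus V(F)$ does meet $N(V(F))\setminus V(F)$), and the countable disjunction over path lengths together with Lusin--Souslin for the bounded existential quantifiers makes the set Borel. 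So once the graph-theoretic input is in place, your argument closes the corollary without Proposition~\ref{prop:ext} or the cocycle at all, which is a real simplification for this particular statement.

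The one genuine gap is precisely the one you flag: you need the Tutte peripheral-cycle characterisation for \emph{infinite} $3$-connected planar graphs with a $2$-basis, and this does not follow formally from the finite theorem. You should pin it down explicitly, either by citing the infinite version from the literature (Thomassen's paper \cite{Thom80}, already used here for Theorem~\ref{tma:Thom80}, develops peripheral cycles in this setting; Bruhn's work on cycle spaces of infinite graphs is also relevant), or by re-deriving it via a finite-exhaustion argument in the spirit of Lemma~\ref{lem:whitcore}, using Whitney rigidity on the $3$-connected torsos of large balls. Bear in mind, too, that what you lose relative to the paper's route is the Borel spherical extension itself: the paper constructs it as a tool of independent interest (a Borel ``combinatorial embedding''), not just for this corollary, so your shortcut would not replace Proposition~\ref{prop:ext} in the paper as a whole.
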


We now point out the tools that will be developed in order to prove the above corollary. Informally, in order to take advantage of the planarity of the components of a Borel graph $\cal G$ we need to endow $\cal G$ with some additional structure encoding the information about some embedding of the components into either the plane $\RR^2$ or the 2-dimensional sphere $\SS$. In this article we encode planarity using rotation systems. 

A \emph{spherical rotation system} on a locally finite graph $G$ is a collection $\omega = \{\omega_v\}_{v\in V(G)}$ such that $\omega_v$ is a cyclic order on the neighbours of $v \in V(G)$ and every $\omega_v$ is induced by the same embedding of $G$ in $\SS$. A spherical rotation system is obtained by embedding the graph $G$ in $\SS$ and letting each vertex record the order in which its neighbours are arranged. In this sense, a spherical rotation system may be regarded as a ``combinatorial embedding'' of a graph.

In Section 5 we introduce and study Borel graphs with planar components for which a spherical rotation system has been specified in a Borel fashion. More precisely, we introduce the following class of Borel graphs. In the following definition, we denote by $\cal G_x$ the countable, connected graph obtained as the component of the vertex $x$ in $\cal G$.

\begin{definition*}[See Definition \ref{def:borsph}]
A locally finite Borel graph $\cal G$ on a standard Borel space $X$ is \emph{Borel spherical} if $\cal G$ is equipped with a Borel map $\omega$ with domain $X$ such that for every $x\in X$, $\om(x)$ is a cyclic order on the neighbourhood of $x$, and  $\{\om (x)\}_{x \in V(\cal G_x)}$ is a spherical rotation system on $\cal G_{x}$.  Such a map $\omega$ is called a \emph{Borel rotation system}.
\end{definition*} 

In Example \ref{ex:adams} we provide an example showing that not all Borel graphs with spherical components are Borel spherical. In spite of this, in the case of graphs with 3-connected components, one can still prove the following proposition. Recall that an \emph{extension} of a Borel graph $\cal G$ is a Borel graph homomorphism $f\colon \cal H \rightarrow \cal G$, where $\cal H$ is a Borel graph, such that $f$ restricted to each component of $\cal H$ is a graph isomorphism onto its image.

\begin{prop*} [Proposition \ref{prop:ext}]
Let $\cal G$ be a locally finite Borel graph with 3-connected spherical components. Then, there exist a Borel spherical graph $\cal H$ and a 2-to-1 extension $\cal H \rightarrow \cal G$. 
\end{prop*}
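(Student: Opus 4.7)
The plan is to define $\cal H$ as a canonical double cover of $\cal G$ in which each vertex $x \in X$ is split into two copies, one for each of the two cyclic orderings of $N_{\cal G}(x)$ coming from the (essentially unique) spherical embedding of the component $\cal G_x$. The central ingredient is Whitney's uniqueness theorem for 3-connected planar graphs: such a graph has a unique embedding in $\SS$ up to reflection, and therefore admits exactly two spherical rotation systems, some $\omega$ and its pointwise reverse $\omega^{-1}$. Since every vertex of a 3-connected graph on at least four vertices has degree at least three, these two systems disagree at every vertex, which is precisely what forces the cover to be exactly $2$-to-$1$ (the exceptional small 3-connected graphs are handled directly).

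Concretely, I would first form the auxiliary standard Borel space $Y := \{(x,c) : x \in X, \; c \text{ is a cyclic order on } N_{\cal G}(x)\}$, which is Borel and finite-to-one over $X$ by local finiteness of $\cal G$. I would then let $\cal H \subset Y$ consist of those pairs $(x,c)$ such that $c = \omega_x$ for some spherical rotation system $\omega$ on $\cal G_x$. By the preceding observation, each fibre of the projection $\cal H \to X$ has cardinality exactly $2$. Declare $(x,c) \sim_{\cal H} (y,c')$ iff $(x,y) \in \cal G$ and $c, c'$ are induced by the same spherical rotation system on the common component $\cal G_x = \cal G_y$. The projection $f \colon \cal H \to \cal G$, $(x,c) \mapsto x$, is then a Borel $2$-to-$1$ graph homomorphism, and partitions the preimage of each component of $\cal G$ into two pieces, each spanning an isomorphic copy of that component under $f$, so $f$ is a $2$-to-$1$ extension. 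The required Borel rotation system on $\cal H$ is $\omega_{\cal H}(x,c) := c$, transported from $N_{\cal G}(x)$ to $N_{\cal H}((x,c))$ via the bijection supplied by $f$.

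The main obstacle is verifying that $\cal H$ and its edge relation are Borel, since the condition ``$c$ extends to a spherical rotation system on $\cal G_x$'' \emph{a priori} quantifies over the (possibly infinite) component $\cal G_x$. To handle this I would exploit the combinatorial rigidity of 3-connected planar graphs: the set of facial cycles of any spherical embedding admits a purely combinatorial description (as in Tutte's characterisation of the faces of 3-connected planar graphs via induced non-separating cycles), so it is a Borel function of $\cal G_x$. Once the face set is in hand, a cyclic order $c$ at $x$ extends to a spherical rotation system precisely when the faces incident to $x$ can be arranged around $x$ in the order $c$, which is a local Borel condition. Analogously, the edge relation on $\cal H$ becomes Borel because the compatibility of $(x,c)$ with $(y,c')$ reduces to checking that the two faces containing the edge $xy$, read off from $c$ at $x$, match those read off from $c'$ at $y$. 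These local reductions are where I anticipate spending the most care, especially in the multi-ended case where face-traces may be infinite walks; however, only finitely many faces touch each edge, so the required checks remain countable conjunctions of Borel conditions on finite subgraphs.
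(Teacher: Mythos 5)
Your construction of $\cal H$ is in substance the same double cover the paper builds: you take all pairs $(x,c)$ where $c$ is one of the two spherical rotations at $x$ (Whitney/Imrich rigidity guaranteeing exactly two per vertex), and you join two such pairs by an edge when the rotations come from the same oriented embedding. The paper packages this as $X\times\ZZ_2$ with a Borel $\ZZ_2$-cocycle $c_{(x,y)}$ recording whether a pre-chosen Borel rotation $\omega(x)$ agrees or disagrees with $\omega(y)$; after unwinding, the two descriptions give naturally isomorphic Borel graphs with the same rotation map $\omega_{\cal H}(x,i)=\omega(x)$ or $\omega(x)^-$. So the architecture is the same and the conclusion would follow.

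Where you genuinely diverge from the paper is in the Borelness verification, and this is the technical heart of the proposition, so the difference matters. You propose to compute the facial cycles of $\cal G_x$ combinatorially via Tutte's characterisation of face cycles of $3$-connected planar graphs as the induced non-separating cycles, and then read off which cyclic orders $c$ are face-compatible at $x$ and which pairs $(c,c')$ are coherently oriented along an edge. As stated this has real gaps: Tutte's theorem is a finite-graph statement, and for an infinite $3$-connected planar graph not every face is bounded by a cycle (in the graph of Example \ref{ex:miticu1}, faces near the two accumulation points are not cycles at all); ``non-separating'' is itself a global property of $\cal G_x$ whose Borelness would need an independent argument; and the multi-ended case, which you flag yourself, is exactly where this route becomes genuinely delicate. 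The paper sidesteps the face structure entirely by proving Lemma \ref{lem:whitcore}: a rotation at $v$ is induced by a spherical embedding of $G$ if and only if it is induced by a spherical embedding of $B_r(v)$ for every $r$. This turns the quantification over the infinite component into a countable conjunction of conditions on finite balls, giving Borelness of your set $\cal H$ (the paper's $Y=\bigcap_r Y_r$) immediately; and for the edge-consistency check (your ``the two faces containing $xy$ match''), the paper instead produces, using Menger's theorem and $3$-connectivity, a pair of non-crossing cycles $A,B$ through $(x,y)$ and compares the cyclic positions of the three neighbours $y,A_x,B_x$ at $x$ with those of $x,A_y,B_y$ at $y$, which is a purely local Borel computation. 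If you keep your presentation of $\cal H$ but replace the Tutte-face step with the ball-approximation lemma and the non-crossing-cycle comparison, the argument closes cleanly; otherwise you owe a proof that the face data you rely on is Borel, including a treatment of infinite face boundaries.
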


The above proposition is the main ingredient in the proof of Corollary \ref{cor:dosbaseintro}. The author believes that a Borel spherical extension may be obtained for any Borel graph with planar components though it may not be 2-to-1 in general. However, such a result is not proven here. 

In the last section of this paper, namely Section 6, we show that, despite of not having proven measure treeability of arbitrary locally finite Borel graphs with planar components, we can rule out measured property (T) for equivalence relations induced by Borel graphs with planar components. Before we state this result we recall some definitions.

Recall that a \emph{p.m.p.~extension} of p.m.p.~countable Borel equivalence relation $(R,\mu)$ is another p.m.p.~countable Borel equivalence relation $(Q,\nu)$ together with a Borel map $f\colon Y \rightarrow X$, where $Y$ and $X$ are the vertex spaces of $Q$ and $R$ respectively, such that $f_* \nu = \mu$ and $y \sim_Q y'$ implies $f(y) \sim_R f(y')$ for almost every $y,y'\in Y$.

A p.m.p.~countable Borel equivalence relation $(R,\mu)$ is \emph{approximable} if there exists an increasing sequence of Borel subequivalence relations $(R_n)_{n\in \NN}$ such that $R = \bigcup_{n\in \NN} R_n$ and, for any Borel subset $A$ of the vertex space, we have that $R_n|_A = R|_A$ for some $n$ implies $\mu (A) = 0$. We say that $(R,\mu)$ has \emph{measured property (T)} if and only if no p.m.p.~extension of $(R,\mu)$ is approximable. This definition is equivalent to that introduced by Moore \cite{Moo} and Zimmer \cite{Zim}. This follows from soon available work of the author with \L{}ukasz Grabowski and Samuel Mellick. 

\begin{theorem*}\label{tma:noTintro}
Let $(R,\mu)$ be a p.m.p.~Borel equivalence relation with measured property (T). Then $R$ does not admit a locally finite graphing with planar components a.s.
\end{theorem*}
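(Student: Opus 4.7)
The plan is to derive a contradiction by producing a p.m.p.\ extension of $(R,\mu)$ that is measure treeable. By standard results --- vanishing of the first $\ell^2$-Betti number under measured property (T), Gaboriau's formula $\cost(Q) = 1 + \beta_1^{(2)}(Q)$ for treeable $Q$, and Hjorth's theorem that treeable cost-one p.m.p.\ relations are hyperfinite --- any p.m.p.\ extension $(Q,\nu)$ of $(R,\mu)$ that is treeable with infinite classes must be hyperfinite, hence approximable, contradicting the defining condition of measured property (T). Passing to an ergodic component (on which property (T) is inherited), we may assume $(R,\mu)$ is ergodic; we may also assume its classes are infinite a.s., for otherwise the theorem is vacuously false (finite graphs are trivially planar), so the hypothesis of measured property (T) implicitly excludes this case.

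Suppose for contradiction that $R$ admits a locally finite Borel graphing $\cal G$ with planar components a.s. By ergodicity, the number of ends of $\cal G_x$ is a.s.\ constant, equal to some $e \in \{1,2,\infty\}$. If $e \le 1$, Theorem \ref{tma:one-end-plan-intro} gives $\mu$-treeability of $R$ itself --- contradiction. If $e=2$, each component has a finite edge cut isolating its two ends, making the component quasi-isometric to $\mathbb Z$ and thus amenable; by Connes--Feldman--Weiss, $R$ is hyperfinite, hence treeable --- contradiction.

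The remaining case $e=\infty$ is the crux. Here the plan is to construct a Borel extension $\cal H \to \cal G$, with $\cal H$ Borel $\cal P_1$-accessible, so that Theorem \ref{tma:planarintro} gives measure treeability of the p.m.p.\ extension $\Rel(\cal H)$ of $R$, contradicting property (T). Concretely: first extend to isolate the $3$-connected blocks of each component via a Borel block-cut-tree decomposition, then apply Proposition \ref{prop:ext} to obtain a Borel spherical extension $\cal H$ carrying a Borel rotation system. Using the combinatorial embedding data supplied by the rotation system, realize a nested Borel separation system $\cal S \subset \Sep(\cal H)$ with property $(\ast)$ whose levels are one-ended planar graphs, certifying Borel $\cal P_1$-accessibility. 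The principal obstacle is this last construction: executing in a Borel manner a Dunwoody-style canonical end decomposition on $\cal H$ in the absence of a priori accessibility, using the facial structure of the Borel embedding to identify the relevant finite cuts between ends as Borel objects in $\Sep(\cal H)$.
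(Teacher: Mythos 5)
Your reduction to the one-ended case via Theorem~\ref{tma:one-end-plan} is correct, and your opening explanation of why a measure-treeable (aperiodic) relation cannot have measured property~(T) is a valid unpacking of a standard fact. But the proposal fails at precisely the point you flag as ``the principal obstacle'': the case $e=\infty$. What you describe there --- obtaining a nested Borel separation system with property~($\ast$) on a (possibly non-accessible) locally finite planar Borel graph so that the levels are one-ended --- is exactly Question~\ref{q2} of the paper, which is posed as an open problem. Nothing in the facial/rotation-system structure of a Borel spherical extension is known to yield a Borel ``canonical end decomposition'' in the absence of accessibility; if it did, the paper would have proved measure treeability of \emph{all} planar Borel graphs. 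So the multi-ended case of your argument is not a proof; it is a restatement of an open question.

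The paper avoids this entirely by severing the multi-ended case from planarity altogether. Proposition~\ref{prop:multi} shows that any p.m.p.\ relation carrying a locally finite, a.s.\ multi-ended graphing is already approximable, with no planarity or accessibility hypothesis. Its proof runs the Dicks--Dunwoody construction to get a nested Borel separation system $\mathcal S$ in which every separation separates \emph{some} pair of ends --- far weaker than separating every pair, and available without accessibility --- together with local finiteness of $\mathcal S$ in the sense of Proposition~\ref{prop:locfin}. The Marker Lemma then produces a vanishing decreasing filtration $\mathcal S_n \subset \mathcal S$ whose induced part-relations $R_n$ exhaust $R$, and since each separation has two infinite sides one can extract, for any $A$ with $R_n|_A = R|_A$, an infinite family of disjoint rays witnessing $\mu(A)=0$ by measure preservation. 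Combined with the (definitional) non-approximability of property~(T) relations, this shows the graphing must be at most one-ended, after which Theorem~\ref{tma:one-end-plan} applies. Note this also subsumes your $e=2$ case, where your quasi-isometry-to-$\mathbb Z$ reasoning is in any case shaky --- the constants need not be uniform over a single two-ended component --- though the hyperfiniteness conclusion is indeed true by Miller's results on ends of graphed relations.
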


Let us briefly outline the proof of this theorem. First, in Proposition \ref{prop:multi} we show that  p.m.p.~Borel equivalence relations with a.s.~multi-ended graphings are approximable. This proof uses techniques involving Borel separation systems. This Proposition implies that graphings of p.m.p.~Borel equivalence relation with measured property (T) must be one-ended. In combination with Theorem \ref{tma:planarintro}, we deduce Theorem \ref{tma:noTintro}.

To conclude this introduction we pose a question. By \cite{Car}, every locally finite connected graph admits a nested separation system with property ($\ast$) separating any two ends. Theorem \ref{tma:accesintro} above proves that every locally finite Borel graph with accessible components admits a  nested Borel  separation system with property ($\ast$) separating any two ends in the same component. 

\begin{question*}[Question \ref{q2}]\label{q2intro}
For a locally finite borel graph $\cal G$, does there exist a nested Borel  separation system with property ($\ast$) separating any two ends in the same component?
\end{question*}

As discussed in Remark \ref{rem:q2}, a positive answer to the above question would allow us to drop the accessibility hypothesis in Theorem \ref{tma:accesintro}. In this situation, the accessibility hypotheses of Theorems \ref{tma:bddtwintro} and \ref{tma:planarintro} could be dropped. Namely, a positive answer to Question \ref{q2intro} would imply that locally finite Borel graphs with planar components are measure treeable, and that locally finite Borel graphs with no thick ends are Borel treeable.

\section{Preliminaries}

\subsection{Graphs}

The reader is referred to \cite{Diestel05} for further details on the fundamentals of graph theory presented here. In this paper we consider undirected, simple, locally finite graphs with countable connected components. We will often refer to the connected components of a graph as, simply, components. The vertex set of a graph $G$ is denoted by $V(G)$ and its edge set by $E(G)$. The \emph{neighbourhood} of a vertex $v\in V(G)$ is denoted by $N_G (v)$. 

The graph distance on $G$ is denoted by $d_G$. The \emph{ball} of radius $r \in \NN$ around a vertex $v \in V(G)$, denoted by $B_r (v)$, is the subgraph of $G$ induced by $\{u \in V(G) \colon d_G (u,v) \leq r\}$.  More generally, if $A \subset V(G)$, the induced subgraph on $A$ is denoted by $G[A]$, and its \emph{boundary} $\partial_G A$ is the subset of edges $e \in E(G)$ with one end-vertex in $A$ and the other end-vertex in $V(G) \backslash A$. In a slight abuse of notation, we denote by $G \backslash A$ the graph $G[V(G) \backslash A]$. Similarly, if $B\subset E(G)$, we let $G \backslash B$ denote the graph with vertex set $V(G)$ and edge set $E(G)\backslash E$.

A \emph{path} is a graph $P$ with vertex set $V(P) = \{x_1,\dots,x_n\}$ and edge set $E(P) = \{(x_i,x_{i+1}):i=1,\dots,n-1\}$. Given such a path $P$, a graph $C$ with $V(C) = V(P)$ and $E(C) = E(P) \cup \{(x_n,x_1)\}$ is a \emph{cycle}. A \emph{ray} is a semi-infinite path, i.e.: a graph $R$ with $V(R) = \{x_i : i\in \NN\}$ and $E(R) = \{(x_i,x_{i+1}): i\in \NN\}$. A \emph{tail} of a ray is an infinite connected subgraph of the ray. By paths, cycles, and rays in a graph $G$ we refer to subgraphs which are, respectively, paths, rays, or cycles.

 Two rays in a graph $G$ are \emph{equivalent} if for every finite $F \subset V(G)$ there are tails of both rays in the same connected component of $G \backslash F$. An \emph{end} of $G$ is an equivalence class of rays. The set of ends of a graph $G$ is denoted by $\ends (G)$. An end $\omega$ is \emph{thick} if there exist infinitely many disjoint rays in $\omega$, and \emph{thin} otherwise.

A graph $G$ is $k$\emph{-(edge)-accessible}, for $k \in \NN$, if for every two distinct $\omega,\rho \in \ends (G)$ there exists $B \subset E(G)$ with $|B| \leq k$ such that any two rays $R_\omega$ and $R_\rho$ representing $\omega$ and $\rho$ respectively have tails in different connected components of $G\backslash B$. We say that a graph $G$ is \emph{accessible} if it is $k$-accessible for some $k\in \NN$. 

A graph $G$ is $k$\emph{-vertex-accessible}, for $k \in \NN$, if for every two distinct $\omega,\rho \in \ends (G)$ there exists $U \subset V(G)$ with $|B| \leq k$ such that any two rays $R_\omega$ and $R_\rho$ representing $\omega$ and $\rho$ respectively have tails in different connected components of $G\backslash U$. We say that a graph $G$ is \emph{vertex-accessible} if it is $k$-vertex-accessible for some $k\in \NN$.

A graph $G$ is $k$\emph{-connected} if for every set $A \subset V(G)$ with $|A| < k$, we have that $G \backslash A $ is connected, i.e.~$G$ stays connected whenever we remove less than $k$ vertices. Alternatively, we may characterise vertex connectivity in terms of paths. The proof of the following theorem is a straightforward adaptation of the proof of~\cite[Proposition 8.4.1]{Diestel05}.

\begin{theorem}[Menger]
A graph $G$ is $k$-connected if and only if for every pair $u,v \in V(G)$ of distinct vertices there exist at least $k$ internally disjoint paths with $u$ and $v$ as their end-vertices.
\end{theorem}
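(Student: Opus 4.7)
The plan is to prove the two directions separately, reducing the harder direction to the local form of Menger's theorem (for non-adjacent pairs of vertices), which is the usual workhorse behind the connectivity version and whose inductive proof the paper explicitly defers to Diestel.

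The easy direction (``$\Leftarrow$'') is a pigeonhole argument. Assume every pair of distinct vertices admits $k$ internally disjoint paths, and let $A \subset V(G)$ with $|A| < k$. For any two vertices $u, v \in V(G) \setminus A$, the $k$ internally disjoint $u$--$v$ paths have pairwise disjoint sets of internal vertices, so at least one of them avoids $A$. Hence $u$ and $v$ lie in the same component of $G \setminus A$, and $G$ is $k$-connected.

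For the forward direction, assume $G$ is $k$-connected and fix distinct $u, v \in V(G)$. If $u$ and $v$ are non-adjacent, then any $S \subset V(G) \setminus \{u,v\}$ separating $u$ from $v$ is in particular a separator of the whole graph, so $|S| \ge k$. Applying the local Menger theorem (maximum number of internally disjoint $u$--$v$ paths equals minimum size of a $u$--$v$ separator) yields the desired $k$ paths. If $u$ and $v$ are adjacent, the edge $uv$ itself is one path, and the remaining $k-1$ will be found in $G' := G \setminus \{uv\}$, where $u, v$ are non-adjacent. By local Menger, it suffices to check that every $u$--$v$ separator $X \subset V(G') \setminus \{u,v\}$ in $G'$ satisfies $|X| \ge k-1$. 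Suppose for contradiction $|X| \le k-2$. Since $G$ is $k$-connected, $u$ has degree at least $k$ in $G$, so it has some neighbour $w \notin X \cup \{v\}$. The graph $G \setminus (X \cup \{u\})$ is connected (we removed fewer than $k$ vertices), so some path joins $w$ to $v$ avoiding $X$ and $u$. Prepending the edge $uw$ produces a $u$--$v$ path in $G \setminus X$ not using the edge $uv$, contradicting that $X$ separates $u$ from $v$ in $G'$.

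The main obstacle is the local Menger theorem itself. Its proof is the nontrivial induction on $|E(G)|$ that picks a minimum separator, chooses an edge carefully, and applies induction to a contraction or deletion while preserving the separator-size invariant. Since the paper explicitly describes this step as a straightforward adaptation of Diestel's proof, the plan is to cite that reference and focus only on the translation from the local statement to the global connectivity statement outlined above, together with the separate treatment of the adjacent case.
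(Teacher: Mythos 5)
Your proposal is correct and takes essentially the same route as the paper, which does not supply a proof but simply remarks that the statement is a straightforward adaptation of Diestel's Proposition 8.4.1. The reduction you give — prove the easy implication by pigeonhole, then obtain the hard implication from the local form of Menger by splitting into the non-adjacent and adjacent cases, treating the latter via deletion of the edge $uv$ and a separator-size estimate in $G - uv$ — is precisely the standard translation from the local statement to the global connectivity version, and the verification of the separator bound in the adjacent case is carried out correctly.

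One caveat worth flagging, though it is really a defect of the paper's definition rather than of your argument: the step ``since $G$ is $k$-connected, $u$ has degree at least $k$'' is true under the usual convention that a $k$-connected graph has more than $k$ vertices, but the paper's definition of $k$-connectedness omits any cardinality requirement. Under the paper's definition, $K_k$ is $k$-connected (removing fewer than $k$ of its vertices leaves a nonempty complete, hence connected, graph) yet has minimum degree $k-1$, and two of its vertices are joined by only $k-1$ internally disjoint paths. So the stated theorem is literally false for such small complete graphs, and your degree claim is exactly where the argument registers this. Once the standard hypothesis $|V(G)| > k$ is restored, both your proof and Diestel's go through without change.
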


Following \cite{RST}, given two graphs $G$ and $H$, we say that $G$ has an $H$\emph{-minor} if for each $v\in V(H)$ there exists a non-empty connected subgraph $\alpha_v \subset G$ and for each $e\in E(H)$ there exists an edge $\alpha_e \in E(G)$ such that 
\part for each distinct $u,v\in V(H)$ the graphs $\alpha_u$ and $\alpha_v$ are vertex-disjoint,

\part for each distinct $e,f\in E(H)$ we have $\alpha_e \neq \alpha_f$,

\part for each $v\in V(H)$ and $e\in E(H)$, we have $\alpha_e \not\in E(\alpha_v)$, and

\part if $(u,v) \in E(H)$, then the end-vertices of $\alpha_{(u,v)}$ are contained in $\alpha_u$ and $\alpha_v$.
\trap

\subsection{Tree decompositions}

A \emph{tree decomposition} of a connected graph $G$ is a pair $(T, \cal V)$ where $T$ is a tree and $\cal V = (V_t)_{t \in V(T)}$ is a collection of subsets of $V(G)$, called the \emph{parts}, such that 

\part $V(G) = \bigcup_{t \in V(T)} V_t$,

\part for every $(u,v) \in E(G)$ there exists $t \in V(T)$ such that $\{u,v\}\subset V_t$, and

\part if $t_1, t_2, t_3 \in V(T)$ are such that $t_2$ lies in the path between $t_1$ and $t_3$, then $V_{t_1} \cap V_{t_3} \subset V_{t_2}$.

\trap
In a slight abuse of notation, we will refer to the vertices or edges of $T$ as the vertices or edges of the tree decomposition.  The \emph{torsos} of $(T,\cal V)$ are the graphs $H_t$ obtained by adding to the induced graph $G[V_t]$ an edge $(x,y)$ if $x,y \in V_t \cap V_{t'}$ for some neighbour $t' \in V(T)$ of $t$. We refer to the edges in $E(H_t) \backslash E(G[V_t])$ as \emph{virtual edges}.


%

A \emph{separation} of a countable connected graph $G$ is a pair  $(S,B)$ where $S$ is a finite subset of $V(G)$ and $B \subset \partial S$ satisfies the following property. Suppose that $(u,v) \in \partial S$, where $u\in S$ and $v\not \in S$, is such that there exists $(u',v')\in B$ with $u'\in S$ and $v'\not \in S$ satisfying that $v$ and $v'$ lie in the same connected component of $G\backslash S$. Then we require that $(u,v) \in B$. The set of separations of a graph $G$ is denoted by $\Sep (G)$.

We say that a separation is \emph{proper} if $G\backslash S$ is not connected and $\emptyset \neq B \subsetneq \partial S$. For a proper separation, the choice of $B$ is equivalent to the  choice of a non-empty proper subset of the set of connected components of $G\backslash S$.

The \emph{adhesion set} of a separation $(S,B)$ is the set $S$. The \emph{order} of a separation $(S, B)$ is $|S|$. The \emph{cut} of a separation $(S,B)$ is the set $B$. If $(S,B)$ is a separation, we let $(S,B)^- := (S, (\partial S) \backslash B)$.

Let $(S,B)$ be a separation and let 
$$
    C:=\{u \in V(G) \backslash S\colon  \exists v \in S \text{  such that }(u,v) \in B\}.
$$ 
Let $\overline{C}$ be the set of vertices $w\in V(G) \backslash S$ for which there exists some $u \in C$ such that $u$ and $w$ belong to the same $G\backslash S$-connected component. The \emph{sides} of $(S,B)$ are the graphs $A_1= G[S \cup \overline{C}]$ and $A_2 = G[V(G) \backslash \overline{C}]$. We let $\Sides (S,B)$ denote the ordered pair $(A_1,A_2)$. We note that $\Sides((S,B)^-) = (A_2,A_1)$

\begin{remark}
If $(S,B)$ is a separation and $(A_1,A_2) = \Sides (S,B)$, then $A_1 \cup A_2 = G$, so the ordered pair $(V(A_1),V(A_2))$ is what is usually referred to as a separation in the literature: an ordered pair $(U_1,U_2)$ with $U_1,U_2 \subset V(G)$ such that $G = G[U_1] \cup G[U_2]$. Conversely, given such an ordered pair $(U_1, U_2)$ we can construct a separation $(S,B):=( U_1 \cap U_2,  \partial S \cap E(G[U_1]))$.  It is straightforward to check, using the assumption that $G$ is connected, that $\Sides (S,B) = (G[U_1],G[U_2])$.

Hence, both definitions of separation are equivalent when $G$ is a connected graph. Under the correspondence established in the above paragraph, our definition of proper separation and the usual one in the literature -- a separation $(U_1,U_2)$ is proper if and only if $U_1 \backslash U_2$ and $ U_2 \backslash U_1$ are non-empty -- agree too. We opt for our definitions as they only require local information, which will become convenient when we deal with separations in the Borel setting in Section 3.
\end{remark}

The following is a reformulation of \cite[Lemma 12.3.1]{Diestel05} to our setting.

\begin{lemma}\label{lem:diesteltree}
Let $(T,\cal V)$ be a tree decomposition of a connected graph $G$. Consider an edge $(t_1,t_2)\in E(T)$, let $S := V_{t_1} \cap V_{t_2}$ and let $B_1$ (resp. $B_2$) be the subset of $\partial_G S$ consisting of edges with one end-vertex in $S$ and the other in some $V_t$, where $t \in V(T)$ lies in the same connected component of $T \backslash \{(t_1, t_2)\}$ as $t_1$ (resp. $t_2$). Then, the pairs $(S,B_1)$ and $(S,B_2)$ are separations of $G$.\qed
\end{lemma}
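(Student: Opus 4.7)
The plan is to interpret each of $(S, B_1)$ and $(S, B_2)$ as encoding the bipartition of $V(G)$ obtained from the bipartition of $V(T)$ into the two components of $T \backslash \{(t_1, t_2)\}$. Denote these components by $T_1$ and $T_2$, with $t_i \in T_i$, and set $U_i := \bigcup_{t \in T_i} V_t$. The first step is to verify the two standard tree-decomposition facts $U_1 \cup U_2 = V(G)$ and $U_1 \cap U_2 = S$. The first follows directly from axiom (a); the inclusion $S \subseteq U_1 \cap U_2$ is immediate, while the reverse inclusion uses axiom (c): a vertex in $U_1 \cap U_2$ lies in $V_a \cap V_b$ for some $a \in T_1$ and $b \in T_2$, and any $a$-$b$ path in $T$ traverses the edge $(t_1, t_2)$, which forces the vertex into both $V_{t_1}$ and $V_{t_2}$. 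Finiteness of $S$ is built into the adhesion sets of the tree decompositions in the setting of interest.

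The key step is the claim that every connected component of $G \backslash S$ lies entirely inside either $U_1 \backslash S$ or $U_2 \backslash S$. For each $x \in V(G)$, axiom (c) says that $\tau(x) := \{t \in V(T) : x \in V_t\}$ is a subtree of $T$, and by axiom (b) consecutive vertices along any path in $G$ lie together in some part, so their $\tau$-subtrees meet. Consequently, a $G$-path connecting a vertex of $U_1 \backslash S$ to a vertex of $U_2 \backslash S$ must contain some vertex $z$ with $\tau(z) \cap T_1 \neq \emptyset$ and $\tau(z) \cap T_2 \neq \emptyset$; but the subtree $\tau(z)$ then has to contain the edge $(t_1, t_2)$, forcing $z \in V_{t_1} \cap V_{t_2} = S$. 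Hence any such path meets $S$, and the claim follows.

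With this in hand the closure property defining a separation is immediate. Suppose $(u, v) \in \partial_G S$ with $u \in S$, $v \notin S$, and that some $(u', v') \in B_1$ has $v$ and $v'$ in the same connected component of $G \backslash S$. By definition of $B_1$, $v' \in V_t$ for some $t \in T_1$, so $v' \in U_1 \backslash S$; by the previous paragraph the common $G \backslash S$-component of $v$ and $v'$ is contained in $U_1 \backslash S$, whence $v \in V_s$ for some $s \in T_1$ and therefore $(u, v) \in B_1$. The verification for $(S, B_2)$ is symmetric. I do not expect a substantive obstacle: the content is the standard correspondence between an edge of the decomposition tree and the bipartition of $V(G)$ induced by the two subtrees it separates.
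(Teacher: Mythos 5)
The paper does not actually give a proof of this lemma---it is stated with a \verb|\qed| and presented as a reformulation of~\cite[Lemma~12.3.1]{Diestel05} into the paper's ``$(S,B)$'' formalism for separations. Your argument is the standard one behind that lemma: using axiom (c) to show that $\tau(x)=\{t:x\in V_t\}$ is a subtree, then the ``subtrees of adjacent vertices meet'' observation from axiom (b) to conclude that every component of $G\setminus S$ is contained in exactly one of $U_1\setminus S$, $U_2\setminus S$, from which the closure condition defining a separation in the paper's sense follows directly. It is correct, with the one minor caveat you already flag: the paper's definition of separation requires $S$ finite, which is not guaranteed by the bare definition of tree decomposition, so one must (as you do) read the lemma under the standing assumption of finite adhesion, which holds throughout the paper's applications.
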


The separations $(S,B_1)$ and $(S,B_2)$ in the previous lemma are said to be \emph{induced} by the edge $(t_1,t_2)$.

The following constructions and definitions are extracted from \cite{CHM} and adapted to our definition of separation. The set of separations of $G$ admits a partial order $\leq$ defined as follows. Let $(S, B)$ and $(S', B')$ be separations of $G$ with sides $(A_1,A_2)$ and $(A'_1, A'_2)$ respectively. Then,
\begin{equation*}
(S,B) \leq (S',B') \xLeftrightarrow{\text{def}} A_1 \subset A'_1 \ \text{and}\ A'_2 \subset A_2.
\end{equation*}
Any two separations $(S, B)$ and $(S',B')$ induced by an edge of a tree decomposition are \emph{nested}, i.e.: $(S,B)$ is $\leq$-comparable with either $(S', B')$ or $(S', B')^-$.  

Conversely, let us discuss how certain families of separations induce tree decompositions. A family $\cal S$ of separations on $G$ is \emph{symmetric} if $(S,B) \in \cal S$ implies $(S,B)^- \in \cal S$, and \emph{nested} if any two separations in $\cal S$ are nested. We say, following \cite{CDHS}, that $\cal S$ is a \emph{separation system} if it is symmetric and every separation in $\cal S$ is proper. In particular, the family of separations induced by the edges of a tree decomposition is a nested separation system. The separation system \emph{induced} by a family of proper separations is the smallest separation system containing the family. It is routine to check that the sepation system induced by a nested family of separations is nested.

\begin{definition}[Carmesin-Hamann-Miraftab \cite{CHM}]
A nested separation system $\cal S$ has \emph{property} ($\ast$) if for every $(S, B), (S', B') \in \cal S$ such that $(S, B) < (S', B')$ there exist finitely many $(S'', B'') \in \cal S$ such that  $(S, B) < (S'', B'')< (S', B')$.
\end{definition}

Recall that, if $(A, \leq)$ is a poset, then $x \in A$ is a \emph{predecessor} of $y \in A$ if and only if $x < y $ and there exists no $z \in A$ such that $x < z < y$. We define an equivalence relation $\sim_{\cal S}$ on a nested separation system  $\cal S$ by letting $(S,B) \sim_{\cal S} (S', B')$ if and only if either $(S, B) = (S', B')$ or $(S,B)^-$ is a predecessor of $(S', B')$. We define a graph $T_{\cal S}$  whose vertices are the equivalence classes of $\sim_{\cal S}$, and edges are the pairs of the form $([(S, B)]_{\sim_{\cal S}} , [(S, B)^-]_{\sim_{\cal S}})$, where $(S, B)\in \cal S$. If we then let $\cal V_{\cal S}$ denote the family of all sets of the form $\bigcap \{A_1 \colon (A_1, A_2) = \Sides (S,B), (S,B)\in \chi\}$, where $\chi \in V(T_{\cal S})$, then we have the following.

\begin{prop}[Carmesin-Hamann-Miraftab \cite{CHM}]\label{prop:CHM}
Let $G$ be a connected graph and $\cal S$ a nested separation system with property $(\ast)$. Then 

\part $(T_{\cal S}, \cal V_{\cal S})$ is a tree decomposition of $G$, and 

\part the family of separations induced by the edges of $T_{\cal S}$ is equal to $\cal S$.
\trap
\end{prop}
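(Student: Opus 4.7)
The plan is to analyse the poset $(\cal S, \leq)$ and use property $(\ast)$ together with nestedness to extract the claimed tree, then verify the three axioms of a tree decomposition for $(T_{\cal S}, \cal V_{\cal S})$, and finally observe that the edge-induced separations recover $\cal S$ essentially by unwinding the definitions.

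First I would verify that $\sim_{\cal S}$ is genuinely an equivalence relation. Reflexivity holds by fiat; symmetry follows because $(S,B) \mapsto (S,B)^-$ is an order-reversing involution, so $(S,B)^-$ being a predecessor of $(S',B')$ is equivalent to $(S',B')^-$ being a predecessor of $(S,B)$, with nothing in between in either direction. Transitivity is the subtle point, and this is where property $(\ast)$ together with nestedness do the work, ensuring that a chain of predecessor-type identifications must collapse.

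Next, to show $T_{\cal S}$ is a tree, I would prove connectedness using the following pattern. Given two classes, nestedness provides a comparison between representatives $(S,B)$ and $(S',B')$ (flipping orientation if necessary), and property $(\ast)$ yields a finite maximal chain $(S,B) = (S_0, B_0) < \dots < (S_n, B_n) = (S',B')$ of consecutive predecessors. This chain translates into a walk in $T_{\cal S}$ by alternating the identifications given by $\sim_{\cal S}$ with the edge operation $(S,B) \leftrightarrow (S,B)^-$. Acyclicity follows from the rigidity of the partial order: a cycle in $T_{\cal S}$ would encode a strictly monotone sequence in $(\cal S, \leq)$ that returns to itself, which is impossible.

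For the tree decomposition axioms, the covering axiom $V(G) = \bigcup_t V_t$ is the most delicate: given $v \in V(G)$, the collection of $(S,B) \in \cal S$ with $v$ on the $A_1$-side of $\Sides(S,B)$ is upward-closed under $\leq$, and property $(\ast)$ ensures that its maximal elements form a single $\sim_{\cal S}$-class whose associated part contains $v$. The edge axiom is immediate from the cut condition built into the definition of a separation, which prevents an adjacent pair from being split across sides without both lying in the adhesion $S$. The interpolation axiom reduces to the observation that each edge of $T_{\cal S}$ just flips the orientation of a single separation, so parts along a path in $T_{\cal S}$ shrink monotonically around the separator they share. Part (b) then falls out immediately from the construction: the edge $([(S,B)], [(S,B)^-])$ of $T_{\cal S}$ induces exactly the separation $(S,B)$, and conversely every element of $\cal S$ gives rise to such an edge.

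The main obstacle I anticipate is the covering axiom together with non-emptiness of the parts $V_t$: this is the step that simultaneously uses property $(\ast)$ (to guarantee maximal collections of separations pointing toward $v$ exist), nestedness (to show these maximal collections form a single $\sim_{\cal S}$-class rather than several), and the local cut structure encoded in the definition of a separation (to identify the resulting intersection with the expected part). The remainder of the argument amounts to careful poset bookkeeping.
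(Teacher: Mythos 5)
The paper does not give a proof of this proposition: it is stated as a citation to Carmesin--Hamann--Miraftab \cite{CHM}, so there is no in-paper argument for you to be compared against. Your sketch is a reasonable reconstruction of the kind of argument one would find there, and it identifies the right moving parts (well-definedness of $\sim_{\cal S}$, connectedness and acyclicity of $T_{\cal S}$, the three tree-decomposition axioms, and the edge-induced separations for part (b)).

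Two remarks on the content itself. First, in the covering step you describe the set of $(S,B) \in \cal S$ with $v$ on the $A_1$-side; this set is indeed upward-closed under $\leq$, but you then speak of its \emph{maximal} elements, which for an upward-closed family is not the relevant notion (and need not even be nonempty in a useful way). You want the \emph{minimal} elements of this upward-closed set; those are the separations that orient towards $v$ as tightly as possible, and it is the existence of such minimal elements that property $(\ast)$ guarantees (an infinite strictly decreasing chain $(S_0,B_0) > (S_1,B_1) > \cdots$ in this family forces infinitely many separations between $(S_0,B_0)^-$ and $(S_0,B_0)$, contradicting $(\ast)$). Second, you attribute transitivity of $\sim_{\cal S}$ to property $(\ast)$ together with nestedness, but the transitivity argument goes through using only nestedness and properness of the separations; property $(\ast)$ is not the mechanism there. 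Where $(\ast)$ genuinely enters is in producing the finite chains used for connectedness of $T_{\cal S}$ and in the minimal-element argument above. These are minor calibration issues in an otherwise sound outline, but since the paper simply cites \cite{CHM} for this result, the appropriate course for the paper is to defer to that reference rather than to reprove the proposition.
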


The following is a criterion for a nested separation system to have property ($\ast$). Its proof is essentially that of \cite[Lemma 2.3] {CHM}

\begin{prop}\label{prop:estar}
Let $\cal S$ be a nested separation system on a countable connected graph $G$. Suppose that there exists $k \in \NN$ such that every separation in $\cal S$ has order at most $k$. Then $\cal S$ has property ($\ast$).
\end{prop}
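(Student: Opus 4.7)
The plan is a proof by contradiction: suppose $(S,B)<(S',B')$ in $\cal S$ admit an infinite intermediate set
\[
\cal I := \{(T,C)\in\cal S : (S,B)<(T,C)<(S',B')\}.
\]
Write $(A_1,A_2)=\Sides(S,B)$ and $(A_1',A_2')=\Sides(S',B')$. I aim to derive a contradiction from $|T|\le k$ together with the local finiteness of $G$ (standing hypothesis of the paper).

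First I would show that $\cal I$ is totally ordered by $\le$. For $(T,C),(T',C')\in\cal I$, the four nested relations reduce, after ruling out the two ``inverse'' ones, to a $\le$-comparison. Indeed, from $(T,C)\le (T',C')^-$ with $(D_1,D_2)=\Sides(T',C')$, combined with $(S,B)\le(T,C),(T',C')$, one deduces $V(A_1)\subset V(D_1)\cap V(D_2)=T'$ and $V(G)=V(D_1)\cup V(D_2)\subset V(A_2)$. The second forces $V(A_2)=V(G)$, hence $V(A_1)=S$ and $\bar B=\emptyset$, contradicting properness of $(S,B)$. So $\cal I$ contains an infinite strictly increasing sub-chain $(T_1,C_1)<(T_2,C_2)<\cdots$; write $(A_1^i,A_2^i)=\Sides(T_i,C_i)$.

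By properness, choose witnesses $v\in V(A_1)\setminus S$ and $w\in V(A_2')\setminus S'$. Every $T_i$ lies in $V(A_1')\cap V(A_2)$, and because $V(A_1)\cap V(A_1')\cap V(A_2)=V(A_1)\cap V(A_2)=S$ and $V(A_1')\cap V(A_2)\cap V(A_2')=V(A_1')\cap V(A_2')=S'$, neither $v$ nor $w$ lies in any $T_i$. Hence each $T_i$ is a vertex separator between $v$ and $w$ in $G$. The monotonicity $V(A_1^i)\subset V(A_1^{i+1})$ and $V(A_2^i)\supset V(A_2^{i+1})$ moreover implies that for each $x\in V(G)$ the set $\{i:x\in T_i\}$ is a contiguous interval of $\NN$. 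Fix a finite $v$-$w$ path $P\subset G$; since each $T_i$ meets $V(P)$, pigeonhole together with contiguity produces $x\in V(P)$ with $x\in T_i$ for all $i\ge i_0$. Iterating the same argument in $G\setminus\{x\}$---where $T_i\setminus\{x\}$ is still a $v$-$w$ separator of size $\le k-1$ as long as $v,w$ remain connected there---at most $k$ times yields an infinite sub-chain on which $T_i$ equals a fixed set $T\subset V(G)$ of size at most $k$.

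To conclude, the separations with adhesion $T$ correspond bijectively to non-empty proper subsets of the components of $G\setminus T$, equivalently to cuts $C\subset\partial_G T$; local finiteness of $G$ makes $\partial_G T$ finite, so only finitely many such separations exist, contradicting the infinite sub-chain. The hard part will be making the iteration step robust: in the edge case where $v,w$ get disconnected in $G\setminus\{x_1,\dots,x_l\}$ for some $l<k$ before the adhesion has stabilised, additional stuck vertices must still be harvested, for instance by reapplying the path argument to a new pair of witnesses drawn from $V(A_1')\setminus V(A_1)$---vertices that enter side $1$ partway along the chain---until the adhesion does become constant.
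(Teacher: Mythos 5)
Your first two steps are correct and are exactly the right beginning: the set $\cal I$ is totally ordered (the two ``inverse'' comparisons are ruled out by properness of $(S,B)$, resp.\ of $(S',B')$, just as you argue), and the contiguity of $\{i : x \in T_i\}$ together with pigeonhole on the vertex set of a fixed finite $v$--$w$ path $P$ produces a vertex $x$ lying in $T_i$ for all $i\ge i_0$.

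The iteration step is where the argument breaks, and the problem is more serious than the ``edge case'' framing suggests. It is simply not true that after harvesting common vertices one eventually forces the $T_i$ to stabilise, and the proposed remedy (re-running the path argument with fresh witnesses from $V(A_1')\setminus V(A_1)$) cannot succeed in general. Concretely, let $G$ be the star of three rays $a_1,a_2,\dots$, $b_1,b_2,\dots$, $c_1,c_2,\dots$ emanating from a common degree-$3$ vertex $t$. Take $(S,B)=(\{a_1\},\{(a_1,a_2)\})$, $(S',B')=(\{b_1\},\{(b_1,t)\})$, and for $n\ge 1$ set $T_n:=\{t,c_n\}$ with cut $B_n$ the edges from $T_n$ into the $a$-ray and into the segment $\{c_1,\dots,c_{n-1}\}$. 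One checks directly from the paper's definition of $\le$ via sides that
\[
(S,B) \;<\; (T_1,B_1) \;<\; (T_2,B_2) \;<\;\cdots\;<\;(S',B'),
\]
an infinite strictly increasing chain of proper separations of order $2$, all nested with one another and with $(S,B)^{\pm},(S',B')^{\pm}$. Here the pigeonhole finds $x=t$; but $\{x\}$ already disconnects $v$ from $w$, and $T_i\setminus\{t\}=\{c_i\}$ escapes to infinity inside a third branch of $G\setminus\{t\}$ containing neither $v$ nor $w$. No choice of new witnesses repairs this: any pair $v',w'$ in that branch is separated by only finitely many of the $\{c_i\}$, so a second application of the pigeonhole never reaches a cofinal set of indices. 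Thus the intended conclusion---stabilisation of the $T_i$---is genuinely false for the paper's notion of proper separation, not merely hard to prove.

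What the example really shows is that the statement, read literally with the paper's very permissive definition of separation, needs a supplementary hypothesis. The escaping pieces $\{c_i\}$ are possible precisely because neither side of $(T_i,B_i)$ minus $T_i$ is connected; in other words the Boolean-ring element $\Psi(T_i,B_i)$ is not thin. In every place the paper actually invokes this proposition (the block/Tutte decompositions and the Dicks--Dunwoody construction in Theorem~4.3), the separations in play are thin/connected, and under that restriction the phenomenon above cannot occur: removing a harvested vertex then leaves the remaining adhesion confined to the components that still see both ends. So your proof strategy is not wrong in spirit, but to make it close you need to either (i) carry the connectedness/thinness hypothesis explicitly and use it at exactly the point where you currently get stuck, or (ii) argue instead (as one reading of Carmesin--Hamann--Miraftab does) that the limit sets $V(A_1^\infty)$, $V(A_2^\infty)$ define a separation $(T^\infty,B^\infty)$ with $T^\infty=\bigcap_{i\gg 0}T_i$ and then bound $\bigcup_i T_i$ via the connectedness of the sides. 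As written, the ``harvest stuck vertices until the adhesion is constant'' plan cannot be completed.
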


\begin{example}
A \emph{cutvertex} of $G$ is a vertex $v \in V(G)$ such that $G \backslash \{v\}$ is not connected. Let $\cal S$ be separation system induced by all separations of order 1 on $G$. Then  $\cal S$ is a nested separation system with property ($\ast$) by Proposition \ref{prop:estar}. It is easy to see that the torsos of $(T_{\cal S}, \cal V_{\cal S})$ have no cutvertices. Therefore, these torsos are either single edges or \emph{blocks} of $G$: maximal 2-connected subgraphs. The block of a vertex $v \in V(G)$ which is not a cutvertex is denoted by $[v]_G$. 
\end{example}

\begin{example}\label{ex:Tutte}
Similarly as in the above example we may consider the \emph{Tutte decomposition} of a 2-connected graph $G$. The Tutte decomposition was introduced in \cite{Tutte} for finite graphs, extended to locally finite countable graphs in \cite{DSS}, and arbitrary countable graphs in \cite{Rich04}. The definition used here follows the lines of that in \cite{Diestel05}. 

The Tutte decomposition is the tree decomposition obtained from the separation system $\cal S$ induced by the family of all separations of order 2 which are nested with every other 2-separation. By Proposition \ref{prop:estar}, the set $\cal S$ is nested and has property ($\ast$). The arguments in \cite[Theorems 6 and 10]{Rich04} can be easily translated to our setting to show that the torsos of the Tutte decomposition are either 3-connected or cycles.

\end{example}

Let $G$ be a connected locally finite graph. The \emph{width} of a tree decomposition $(T,\cal V)$ of $G$ is $\sup \{|V_t|-1 : t\in V(T)\}$. The \emph{tree-width} $\tw (G)$ of $G$ is the infimum of the tree-width of its tree decompositions. Trees have tree-width 1. We say that a graph has \emph{bounded tree-width} if $\tw (G) < \infty$. The following is a straightforward proposition.

\begin{prop}
Let $G$ be a locally finite graph of bounded tree-width. Then $G$ is vertex-accessible.
\end{prop}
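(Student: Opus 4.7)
The plan is to use a tree decomposition $(T,\cal V)$ of $G$ witnessing $\tw(G)\le w$ (so every part has at most $w+1$ vertices) and show that the adhesion sets $S_e:=V_{t_1}\cap V_{t_2}$ of its edges $e=(t_1,t_2)$ always suffice as separators of ends. Since each such $S_e$ has at most $w+1$ vertices, this will prove that $G$ is $(w+1)$-vertex-accessible.

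First, using the local finiteness of $G$, I reduce to a tree decomposition in which $T_v:=\{t\in V(T):v\in V_t\}$ is a finite subtree of $T$ for every $v\in V(G)$. Indeed, for each $v$ one may choose, for each of its finitely many neighbours $u$, some $t_{uv}\in T_u\cap T_v$, replace $T_v$ by the finite subtree of $T$ spanned by these $t_{uv}$, and delete $v$ from all other parts; this preserves the tree-decomposition axioms and does not increase the width.

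Next, given an end $\omega$ with representative ray $R_\omega$, I observe that since $S_e$ is finite, $R_\omega$ has a tail in exactly one of the two sides of the separation induced by $e$, and two rays in the same end share this tail-side eventually. This yields a well-defined orientation $O_\omega\colon E(T)\to\{1,2\}$. If $O_\omega\ne O_\rho$ for ends $\omega\ne\rho$, any edge $e$ where they disagree provides the desired separator $S_e$, and we are done.

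The crux is the opposite case $O_\omega=O_\rho$. A standard consistency argument shows that such an orientation of $E(T)$ points to either a single vertex $t^*\in V(T)$ or to an end of $T$. The vertex case is ruled out: using only the connectedness of each $T_v$, one checks that $\bigcap_{e\ni t^*}(\text{side of }e\text{ containing }t^*)=V_{t^*}$, which is finite and cannot contain a tail of a ray. In the end case, represented by a ray $(t_i)_{i\ge 0}$ of $T$ with edges $e_i=(t_i,t_{i+1})$, let $W_i$ denote the union of parts $V_t$ for $t$ in the component of $T\setminus e_i$ not containing $t_0$. Either for some $i$ the tails of $R_\omega$ and $R_\rho$ lie in different components of $G\setminus S_{e_i}$, giving the separator $S_{e_i}$, or they share a common component $C_i\subset W_i$ for every $i$. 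In this latter subcase, since every $T_v$ is finite, for any prescribed finite $F\subset V(G)$ one has $F\cap W_i=\emptyset$ for all sufficiently large $i$; hence $C_i$ sits inside a single component of $G\setminus F$, so the tails of $R_\omega$ and $R_\rho$ share a component of $G\setminus F$ for every finite $F$, forcing $\omega=\rho$ and contradicting our assumption. This ``common end of $T$'' subcase is the main obstacle: it is precisely where the finite-$T_v$ reduction and the nesting $W_0\supseteq W_1\supseteq\cdots$ do the work.
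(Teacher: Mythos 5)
Your proof is correct, and since the paper simply labels this ``a straightforward proposition'' and supplies no argument of its own, there is nothing to compare it against. Your overall strategy---reduce to a tree decomposition with all $T_v$ finite, read off from each end a consistent orientation of $E(T)$, and then either find an edge where two orientations disagree (giving a separator of size at most $w+1$) or show the two ends must coincide---is exactly the natural one, and all the steps check out. The one place you elide something worth saying explicitly is the vertex case: to conclude that a tail of $R_\omega$ lies in $\bigcap_{e\ni t^*}(\text{side containing }t^*)=V_{t^*}$ one should note that, after your reduction, only finitely many edges $e\ni t^*$ have nonempty adhesion set $S_e$ (the remaining ones contribute trivially, their far side being empty when $G$ is connected), so the intersection over all $e\ni t^*$ may be replaced by a finite intersection and a common tail chosen; alternatively, one can argue directly that the tail of $R_\omega$ beyond its last visit to the finite set $V_{t^*}$ projects, via $v\mapsto T_v$, into a single component of $T\setminus\{t^*\}$, contradicting that every edge points toward $t^*$. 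Either patch is routine and your conclusion is unaffected.
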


\subsection{Spherical graphs}

The \emph{topological representation} of a graph $G$ is the topological space obtained by representing every vertex as a point and every edge as an arc homeomorphic to $[0,1]$ whose end-points correspond to the end-vertices of the edge. We say that two graphs are \emph{homeomorphic} if and only if their topological representations are homeomorphic. Graph isomorphism implies graph homeomorphism but the converse is not true; for instance, any two cycles are homeomorphic.

 An \emph{embedding} of a graph $G$ in a topological space $U$ is a topological embedding of the topological representation of $G$ in $U$.  We denote such embeddings by $G \hookrightarrow U$ in a slight abuse of notation. A graph $G$ is \emph{planar} if every connected component of $G$ admits an embedding in $\RR^2$. A graph $G$ is \emph{spherical} if every connected component of $G$ admits an embedding in the 2-dimensional sphere $\SS$. The reader is referred to \cite{Gross} for further details on topological graph theory.

\begin{remark}
A connected countable graph is planar if and only if it is spherical. In this paper we will switch between the spherical and planar picture for convenience. Roughly speaking, on the one hand, the spherical framework is the natural setting to apply Whitney's and Imrich's Theorems below. On the other hand, the planar setting allows to consistently choose an inside and outside for each cycle of a graph. 
\end{remark}

Before recalling some further theory of spherical graphs, we show that torsos of the Tutte decomposition of a spherical 2-connected graph are spherical.

\begin{lemma}\label{lem:margullo}
Let $G$ be a finite 2-connected graph with Tutte decomposition $(T, \cal V)$, and $\alpha\colon G \hookrightarrow \SS$. Then, for each torso $G_t$ on a part $V_t \in \cal V$, with $t\in V(T)$, there exists {$\alpha_t\colon G_t \hookrightarrow \SS$} such that $\alpha_t|_{G[V_t]} = \alpha \circ i_{G[V_t]}$, where $i_{G[V_t]} : G[V_t] \rightarrow G$ is given by the identity map.
\end{lemma}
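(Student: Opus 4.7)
My plan is to define $\alpha_t$ to coincide with $\alpha$ on $G[V_t]$ and to realize each virtual edge of the torso $G_t$ as the image under $\alpha$ of a suitably chosen path in the corresponding ``branch'' of $G$. For each neighbor $t'\in V(T)$ of $t$, let $T_{t'}$ be the component of $T\setminus \{(t,t')\}$ containing $t'$, let $U_{t'}:=\bigcup_{t''\in V(T_{t'})} V_{t''}$, and write $V_t\cap V_{t'} = \{x_{t'},y_{t'}\}$ for the adhesion set; since $G$ is 2-connected and we are in its Tutte decomposition, every adhesion has size exactly $2$. The tree decomposition axioms immediately yield (i) $U_{t'}\cap V_t = \{x_{t'},y_{t'}\}$ and (ii) $U_{t'}\cap U_{t''}\subseteq V_t$ for distinct neighbors $t',t''$ of $t$. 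Moreover, by Lemma \ref{lem:diesteltree}, $\{x_{t'},y_{t'}\}$ is the adhesion set of a 2-separation of $G$ whose corresponding side is $G[U_{t'}]$.

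The key combinatorial claim is that $G[U_{t'}]$ contains an $x_{t'}$-$y_{t'}$-path $p_{t'}$. If not, the connected component of $x_{t'}$ in $G[U_{t'}]$ would be attached to the rest of $G$ only through $\{x_{t'}\}$: within $G[U_{t'}]$ by the component assumption, and between $G[U_{t'}]$ and its complementary side only through $\{x_{t'},y_{t'}\}$ by the 2-separation property (with $y_{t'}$ lying outside this component). This would produce a 1-separation of $G$, contradicting 2-connectedness. When $(x_{t'},y_{t'})\notin E(G)$, such a path $p_{t'}$ has length at least $2$ and, by (i), all of its interior vertices lie in $U_{t'}\setminus V_t$.

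I then define $\alpha_t$ to agree with $\alpha$ on $G[V_t]$ and, for each virtual edge $\{u,v\}\in E(G_t)\setminus E(G[V_t])$, to send $\{u,v\}$ to an injective parametrization of the simple arc $\alpha(p_{t'})$, where $t'$ is any neighbor of $t$ with $\{x_{t'},y_{t'}\}=\{u,v\}$. The verification that $\alpha_t$ is a topological embedding reduces to showing that the interior of each arc $\alpha(p_{t'})$ meets neither $\alpha(G[V_t])$ nor any other arc $\alpha(p_{t''})$ except at shared endpoints: since $\alpha$ is itself an embedding, edges of $G$ never cross in $\SS$, so property (i) handles disjointness from $\alpha(G[V_t])$, while (ii) handles disjointness from $\alpha(p_{t''})$ for $t'\neq t''$.

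The main potentially delicate step is the proof of the $x_{t'}$-$y_{t'}$-path claim; everything else reduces to bookkeeping with tree decomposition axioms together with the definition of an embedding. In particular, the virtual edges of the torso are supplied ``for free'' by the embedding $\alpha$ itself, which already realizes the chosen paths $p_{t'}$ as pairwise non-crossing simple arcs in $\SS$ --- no face-by-face argument is needed.
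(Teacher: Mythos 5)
Your proof is correct and follows the same broad strategy as the paper's: keep $\alpha$ on $G[V_t]$ and realize each virtual edge of the torso as the $\alpha$-image of a genuine path of $G$, then use the tree-decomposition axioms to see that these arcs are pairwise internally disjoint and avoid the interior of $\alpha(G[V_t])$. The one genuine difference is where the replacement path is sought. The paper's proof picks it inside a single part $G[V_{t'}]$ for some $t'\neq t$, whereas you take it inside the whole branch $G[U_{t'}]$; your version is in fact the one that always works. For a concrete witness, let $G$ be the once-subdivided $K_4$: its Tutte decomposition has a hub part $V_t=\{a,b,c,d\}$ whose torso is $K_4$ but whose induced subgraph $G[V_t]$ is edgeless, so the virtual edge of any of the six surrounding triangle torsos has no replacement path contained in a single other part, yet such a path certainly exists inside the relevant branch. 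Your existence argument via 2-connectedness and the 2-separation of Lemma \ref{lem:diesteltree} is sound, modulo one small wrinkle worth flagging: if the component $K$ of $x_{t'}$ in $G[U_{t'}]$ were the singleton $\{x_{t'}\}$, then removing $x_{t'}$ need not disconnect $G$; in that case one instead observes that $U_{t'}\setminus\{x_{t'}\}$ is attached to the rest only through $y_{t'}$, and since $|V_{t'}|\ge 3$ this again yields a 1-separation, so the contradiction stands. The disjointness bookkeeping via (i) $U_{t'}\cap V_t=\{x_{t'},y_{t'}\}$ and (ii) $U_{t'}\cap U_{t''}\subseteq V_t$ is the branch-level analogue of the paper's $V_{t_1}\cap V_{t_2}\subseteq V_t$ step and works for the same reason.
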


\begin{proof}
Define $\alpha_t|_{G[V_t]} = \alpha \circ i_{G[V_t]}$. We only need to find an $\alpha_t$-image for the virtual edges. Let $e$ be a virtual edge in $G_t$. The end-vertices of $e$ are the intersection of $G_t$ with some other torso $G_{t'}$. For 2-connectivity there must exist a part $V_{t'}\neq V_t$ of the Tutte decomposition and a path $P$ in $G[V_{t'}]$ such that $e \not\in E(P)$ and the end-vertices of $e$ and $P$ agree. Let $g_e$ be a homeomorphism between the topological representations of $e$ and $P$ and let $\alpha_t|_{e} = \alpha \circ g_e$. 

We claim that $\alpha_t$ is an embedding of $G_t$. Indeed, let $P$ and $Q$ be two paths defined as above associated respectively to two distinct virtual edges $e$ and $f$ of $G_t$. It suffices to show that $P$ and $Q$ are internally disjoint. The path $P$ is contained in some torso $G_{t_1}$ and $Q$ in $G_{t_2}$, with $t_1, t_2,$ and $ t$ pair-wise distinct. Moreover $t$ lies in the $T$-path between $t_1$ and $t_2$, so $V_{t_1} \cap V_{t_2} \subset V_t$. Hence, $P$ and $Q$ can only intersect at their end-vertices and the conclusion follows.
\end{proof}

The following is a classic characterization of spherical graphs.

\begin{theorem}[Kuratowski \cite{Kur}]
A finite graph $G$ is spherical if and only if it contains no subgraph homeomorphic to $K_{3,3}$ or $K_5$.
\end{theorem}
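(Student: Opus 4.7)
The strategy separates cleanly into the two directions. For the easy direction, I would invoke Euler's formula. Any planar embedding of a simple graph with $n$ vertices and $e$ edges satisfies $e \le 3n-6$ in general and $e \le 2n-4$ in the bipartite case, because every face is bounded by at least three (resp.~four) edges. This rules out a planar embedding of $K_5$ (where $n=5$, $e=10$) and of $K_{3,3}$ (where $n=6$, $e=9$). Since the topological representation of any subdivision of $H$ is homeomorphic to that of $H$, a subgraph of $G$ homeomorphic to $K_5$ or $K_{3,3}$ would produce a homeomorphic copy of $K_5$ or $K_{3,3}$ inside any would-be embedding of $G$ into $\SS$, contradicting what we just established.

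For the harder direction I would proceed by induction on $|E(G)|$, the inductive claim being that a finite graph containing no subgraph homeomorphic to $K_5$ or $K_{3,3}$ is spherical. The base cases are trivial. The first reduction is to the $3$-connected setting. If $G$ has a cut vertex $v$, split $G$ into proper subgraphs $G_1, G_2$ meeting exactly at $v$; each $G_i$ inherits the Kuratowski hypothesis, so by induction each is planar. Embed each $G_i$ with $v$ on the outer face and glue at $v$. For a $2$-separation with adhesion set $\{x,y\}$, add a virtual edge $xy$ on each side (this creates no new Kuratowski subdivision, since any such would lift to one in $G$), apply induction, embed each side with $xy$ on the outer boundary, and glue along $xy$.

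The heart of the argument is the $3$-connected case. My plan is the edge-contraction approach. Pick any edge $e=xy$ and form $G/e$. The central lemma is that $G/e$ is again $3$-connected and again avoids $K_5$- and $K_{3,3}$-subdivisions. By induction $G/e$ is planar, and since $G/e$ is $3$-connected it admits, by Whitney's uniqueness theorem, an essentially unique combinatorial embedding in $\SS$. I then reconstruct an embedding of $G$ by splitting the contracted vertex: the neighbors of the contracted vertex appear in a cyclic order inherited from the embedding of $G/e$, and one must split this cyclic order into two consecutive arcs, one for the $x$-neighbors and one for the $y$-neighbors, in a way compatible with the edges across $\{x,y\}$. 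The point is that an obstruction to such a splitting is exactly what produces a subdivision of $K_5$ or $K_{3,3}$ in $G$.

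The main obstacle is concentrated in the $3$-connected step. Establishing that the contracted graph remains $3$-connected and Kuratowski-free requires a careful case analysis of how small separations or $K_5$/$K_{3,3}$ topological minors in $G/e$ could pull back to $G$ (and in some situations one has to choose the edge $e$ judiciously, or contract in an ``uncontractible'' subgraph using a Tutte-style argument). Even more subtle is the vertex-splitting step: one must show that whenever the cyclic-order splitting fails, one can exhibit an explicit homeomorphic copy of $K_5$ or $K_{3,3}$ inside $G$, and it is here that the interplay between the two forbidden configurations enters crucially through a finite combinatorial case analysis of how neighbors of $x$ and $y$ interleave around the contracted vertex.
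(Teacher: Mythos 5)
The paper does not prove this theorem; it is stated as a classical result and cited directly to Kuratowski's 1930 paper, so there is no proof in the paper to compare against. Your sketch is a correct outline of a standard modern proof (essentially Thomassen's contraction proof, as presented for example in Diestel's textbook): Euler's formula for necessity, reduction to the $3$-connected case via cut vertices and $2$-separations with virtual edges, then edge contraction plus Whitney uniqueness and vertex splitting for the $3$-connected case. This is not Kuratowski's original argument, which was a direct combinatorial/topological analysis of minimal non-planar graphs, but it is one of the cleanest routes to the result.

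Two places where your outline is thinner than it sounds. First, ``pick any edge $e=xy$'' is not correct as stated: one must invoke Thomassen's lemma that every $3$-connected graph on at least five vertices has a \emph{contractible} edge, i.e.\ one whose contraction preserves $3$-connectivity; you do gesture at this, but the existence of such an edge is a real lemma, not a convenience. Second, contraction preserves the property of having no $K_5$- or $K_{3,3}$-\emph{minor} trivially, but not obviously the property of having no $K_5$- or $K_{3,3}$-\emph{subdivision}; the standard fix is to run the induction with minors and then use the classical lemma that a graph has a $K_5$- or $K_{3,3}$-minor if and only if it has a $K_5$- or $K_{3,3}$-subdivision (because a $K_5$-minor without a $K_5$-subdivision forces a $K_{3,3}$-subdivision). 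If you intend the induction to work directly with subdivisions, you need to justify why a Kuratowski subdivision in $G/e$ lifts to one in $G$, which is not automatic. With those two points made precise, your sketch is sound.
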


In \cite{Dir}, with an argument of Erd\H{o}s, Kuratowski's theorem is extended to infinite graphs with aid of the following lemma.

\begin{lemma}\label{prop:sphericalfin2inf}
A countable graph $G$ is spherical if and only if all its finite subgraphs are spherical.
\end{lemma}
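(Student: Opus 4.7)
The forward direction is immediate: restricting an embedding $\alpha\colon G \hookrightarrow \SS$ to the topological representation of any finite subgraph yields an embedding into $\SS$, so the content is in the converse.

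For the converse my plan is to follow the compactness-at-the-level-of-rotation-systems approach attributed to Erd\H{o}s in \cite{Dir}. Exhaust $G$ by finite subgraphs $G_1 \subset G_2 \subset \cdots$ with $\bigcup_n G_n = G$. For each $n$, call a \emph{rotation system} on $G_n$ a choice of cyclic order $\omega_v$ on $N_{G_n}(v)$ for every $v \in V(G_n)$, and call it \emph{spherical} if the whole collection $\{\omega_v\}_v$ is induced by a single topological embedding $G_n \hookrightarrow \SS$. The set $\mathcal E_n$ of spherical rotation systems on $G_n$ is finite (each vertex has a finite neighbourhood and there are finitely many vertices) and, by hypothesis, non-empty. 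Restriction of embeddings shows that restriction induces a well-defined map $\mathcal E_{n+1} \to \mathcal E_n$. Organising the $\mathcal E_n$ into a rooted tree with edges given by these restrictions yields a finitely-branching tree with every level non-empty, so K\"onig's lemma produces an infinite branch $(\sigma_n)_n$ that glues to a rotation system $\sigma$ on $G$ whose every finite restriction is spherical.

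The second step is to realise $\sigma$ by an actual topological embedding $\alpha\colon G \hookrightarrow \SS$. I would construct embeddings $\alpha_n\colon G_n \hookrightarrow \SS$ inductively with $\alpha_{n+1}|_{G_n} = \alpha_n$, each realising $\sigma|_{G_n}$, by drawing the new vertices and edges of $G_{n+1} \setminus G_n$ inside the face of $\SS \setminus \alpha_n(G_n)$ prescribed by $\sigma|_{G_{n+1}}$; the compatibility built into $\sigma$ guarantees at each stage that the required face exists and has the correct boundary combinatorics.

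The main obstacle is ensuring that the limit $\alpha = \bigcup_n \alpha_n$ is a genuine topological embedding of the full topological representation of $G$, not merely a continuous injection, since the newly-drawn cells could in principle accumulate and break continuity of the inverse. I would handle this by imposing at stage $n+1$ that every newly-drawn arc and vertex have diameter at most $2^{-n}$ in a fixed metric on $\SS$, which controls the topology of the image enough for the inverse to be continuous and hence makes $\alpha$ a genuine embedding, completing the proof.
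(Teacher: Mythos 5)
The paper does not actually prove this lemma; it cites Dirac and Schuster \cite{Dir}, where the converse direction is attributed to an argument of Erd\H{o}s. Your Step~1 (K\"onig's lemma applied to the finite, non-empty sets $\mathcal{E}_n$ of spherical rotation systems on an exhaustion $G_1 \subset G_2 \subset \cdots$, restriction being the tree structure) is indeed the classical compactness idea and is correct as far as it goes, modulo a couple of details you should make explicit: one may reduce to $G$ connected and choose the $G_n$ connected, since for a disconnected finite graph the rotation system does not determine the embedding up to homeomorphism of $\SS$, and your claim that the coherent rotation system $\sigma$ lets you extend $\alpha_n$ to $\alpha_{n+1}$ rests precisely on the uniqueness (up to a homeomorphism of $\SS$) of the embedding of a connected finite graph in $\SS$ realising a given rotation system, which should be invoked explicitly rather than asserted via ``the required face exists.'' Also note that the lemma as literally stated says ``countable graph,'' but the statement is false without local finiteness (a vertex of infinite degree gives an open star whose centre has uncountable neighbourhood character in the CW topology and hence cannot embed in a metric space); the paper's standing convention that all graphs are locally finite is doing real work here and is worth flagging.

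The genuine gap is in your Step~2. The constraint that ``every newly-drawn arc and vertex have diameter at most $2^{-n}$'' is simply not achievable: if $G_{n+1}$ introduces an edge both of whose endpoints already lie in $G_n$ (or a new vertex adjacent to two old vertices), the new arc(s) must join two fixed points $\alpha_n(u)$ and $\alpha_n(v)$ of $\SS$, and so has diameter at least $d(\alpha_n(u),\alpha_n(v))$, which is independent of $n$ and in general bounded away from $0$. So the scheme as written cannot be carried out. The underlying concern is valid---a compatible limit of embeddings of finite graphs need not be an embedding, as a spiralling infinite path shows---but the correct control mechanism is different. The standard fix, using local finiteness, is: call a vertex $v$ (resp.\ an edge $e$) \emph{finished} at stage $n$ if all $G$-edges incident to $v$ (resp.\ both endpoints of $e$ and all their incident edges) already lie in $G_n$; when a vertex or edge becomes finished, reserve once and for all a small closed disk (resp.\ tubular neighbourhood) around its image meeting $\alpha_n(G_n)$ only in its closed star, and require that all arcs drawn at later stages avoid the interiors of these reserved sets. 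New arcs attach only to \emph{un}finished vertices, so this routing constraint can always be met inside the faces of $\alpha_n(G_n)$, and it forces a neighbourhood of each point of $\alpha(G)$ to meet $\alpha(G)$ only in finitely many cells, which is exactly what is needed for the limit map to be a homeomorphism onto its image. Replacing your diameter bound with this ``avoid reserved neighbourhoods of finished cells'' condition would close the gap.
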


A \emph{cyclic order} on a set $X$ is a set $\cal O \subset X^3$ which is \part Cyclic: $[x,y,z] \in \cal O$ implies $[z,x,y] \in \cal O$,

\part Asymmetric: $[x,y,z] \in \cal O$ implies $[x,z,y] \not \in \cal O$

\part Transitive: $[x,y,z], [x,z,w] \in \cal O$ implies $[x,y,w] \in \cal O$

\part Total: for all $x,y,z \in X$, either $[x,y,z] \in \cal O$ or $[x,z,y] \in \cal O$.

\trap  
Given a cyclic order $\cal O$ we let $\cal O^- := \{[x,y,z]\in X^3 : [z,y,x]\in \cal O$ denote the \emph{reversed} cyclic order.

A \emph{rotation} at a vertex $v \in V(G)$ of a graph $G$ is a cyclic order $\omega$ on $N_G(v)$. An embedding of $G \hookrightarrow\SS$ induces such a rotation on each vertex of $G$ once an orientation of $\SS$ is fixed. A \emph{spherical rotation system} on $G$ is a collection $\omega = \{\omega_v\}_{v\in V(G)}$ such that $\omega_v$ is a rotation at $v \in V(G)$ and every $\omega_v$ is induced by the same embedding $G \hookrightarrow \SS$. For a spherical rotation system $\omega$, the spherical rotation system obtained by reversing all rotations is denoted by $\omega^-$. Passing from $\omega$ to $\omega^-$ corresponds to reversing the orientation of $\SS$.

Let $\alpha,\beta\colon G \hookrightarrow \SS$ be two embeddings of a spherical graph $G$. We say that $\alpha$ and $\beta$ are \emph{topologically equivalent} if there exists a homeomorphism $h$ of $\SS$ such that $h\circ \alpha = \beta$, and that they are \emph{combinatorially equivalent} if they induce the same rotation system on $G$ up to rotations reversal. All embeddings of a bi-infinite path in $\SS$ are combinatorially equivalent. However, not all such embeddings are topologically equivalent: informally, the two ends of the bi-infinite path can be mapped to the same point of $\SS$ or to two different points.

\begin{theorem}[Whitney \cite{Whit}] All embeddings of a 3-connected spherical finite graph in $\SS$ are topologically equivalent.
\end{theorem}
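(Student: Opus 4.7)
The plan is to reduce the topological uniqueness of embeddings to a purely combinatorial characterization of facial cycles. Call a cycle $C$ in $G$ \emph{peripheral} if $C$ is induced and $G \setminus V(C)$ is connected. The core fact I would invoke, or prove, is Tutte's peripheral cycle theorem: in a $3$-connected finite planar graph, the facial cycles of any spherical embedding are exactly the peripheral cycles of $G$. Since peripherality is defined in purely graph-theoretic terms, this forces $\alpha$ and $\beta$ to induce the same set $\cal F$ of facial cycles.

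For the easy direction (facial $\Rightarrow$ peripheral): if $C$ bounds a face $F$ of $\alpha$ and $e$ is a chord of $C$, then $e$ must be drawn on the side of $C$ opposite to $F$, and $\alpha(C) \cup \alpha(e)$ partitions that side into two open disks. The $G$-vertices in the interiors of the two arcs of $C$ determined by the endpoints of $e$ then lie in different disks, showing that the endpoints of $e$ form a $2$-separation of $G$, contradicting $3$-connectivity. Likewise $G \setminus V(C)$ embeds into the open disk on the side of $C$ opposite to $F$, hence is connected. For the harder converse I would follow Tutte, arguing by induction on $|V(G)|$ via the existence of a contractible edge in every $3$-connected graph on at least five vertices, combined with the fact that peripheral cycles span the $\FF_2$-cycle space of a $3$-connected planar graph.

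Once $\alpha,\beta$ are known to share the facial cycle set $\cal F$, I would construct $h\colon \SS \to \SS$ with $h \circ \alpha = \beta$ as follows. Set $h := \beta \circ \alpha^{-1}$ on $\alpha(G)$, which is a homeomorphism from $\alpha(G)$ onto $\beta(G)$. By Jordan--Schoenflies, the complement of $\alpha(G)$ in $\SS$ (and likewise of $\beta(G)$) is a disjoint union of open disks indexed by $\cal F$. Post-composing $\beta$ with an orientation-reversing self-homeomorphism of $\SS$ if necessary, I can arrange that the sides of every $C \in \cal F$ match consistently between $\alpha$ and $\beta$; by connectedness of the dual graph this only needs to be checked for a single $C$. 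Schoenflies then extends $h|_{\alpha(C)}$ to a homeomorphism of the two corresponding closed disks for each $C$, and pasting over $\cal F$ yields the desired global homeomorphism of $\SS$.

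The main obstacle is the peripheral-implies-facial direction of Tutte's theorem. The topological scaffolding --- Jordan--Schoenflies, orientation matching via the connected dual, face-by-face pasting --- is routine once the combinatorial core is in place. Tutte's step is where $3$-connectivity is genuinely essential, and where the argument becomes delicate: it fails already for $2$-connected planar graphs, which can admit topologically inequivalent spherical embeddings, as one sees in the discussion around Example \ref{ex:Tutte}.
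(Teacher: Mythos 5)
The paper cites Whitney's theorem without giving a proof, so there is no in-text argument of the paper's to compare against; your proposal is the standard modern proof via Tutte's peripheral-cycle characterization of faces, and the overall architecture (reduce topological uniqueness of the embedding to combinatorial invariance of the face set, then glue with Jordan--Schoenflies over the connected dual) is correct.

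One step in your sketch of the easy direction is a non-sequitur: ``$G \setminus V(C)$ embeds into the open disk on the side of $C$ opposite to $F$, hence is connected'' does not follow --- a disconnected graph embeds in a disk perfectly well. The actual argument is a bridge argument: if $G \setminus V(C)$ had two components $H_1, H_2$, both drawn on the side of $C$ opposite $F$, planarity would force their attachment sets on $C$ to be non-interleaving in the cyclic order, which yields two vertices $u,v \in V(C)$ so that $H_1$ attaches only to one closed arc from $u$ to $v$ and $H_2$ only to the other; since $3$-connectivity gives each $H_i$ at least three attachment vertices, each open arc contains a vertex, and $\{u,v\}$ is then a $2$-cut, a contradiction. (Alternatively, just invoke Tutte's theorem outright, as you allow yourself to do.) The chordlessness half of the easy direction, the orientation-matching via the connected dual, and the face-by-face Schoenflies pasting are all fine.
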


Whitney's theorem, as stated, may not hold in the infinite case any longer. This is easy to see using an example along the lines of the bi-infinite path above. However, we still have the following.

\begin{theorem}[Imrich \cite{Imr}]
All embeddings of a 3-connected countable spherical graph in $\SS$ are combinatorially equivalent.
\end{theorem}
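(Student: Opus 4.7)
The plan is to deduce Imrich's theorem from Whitney's theorem for finite graphs via a finite approximation argument. Given two embeddings $\alpha, \beta \colon G \hookrightarrow \SS$ of a 3-connected countable spherical graph $G$, I aim to prove that the induced rotation systems $\omega^\alpha = \{\omega^\alpha_v\}_{v \in V(G)}$ and $\omega^\beta = \{\omega^\beta_v\}_{v \in V(G)}$ agree up to a global reversal, i.e., $\omega^\alpha = \omega^\beta$ or $\omega^\alpha = (\omega^\beta)^-$.

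The core technical ingredient to establish is the following local statement: for every vertex $v \in V(G)$ and every finite subset $\{x_1, \dots, x_k\} \subseteq N_G(v)$, there exists a finite 3-connected subgraph $K$ of $G$ containing $v, x_1, \dots, x_k$ together with the edges joining $v$ to each $x_i$. Such a $K$ would be built by exploiting the 3-connectivity of $G$ via Menger's theorem: between any three specified vertices there exist three internally disjoint paths, and one iteratively augments a finite starting subgraph by adding such paths until it is 3-connected as an abstract graph. In some cases one may need to work with 3-connected topological minors --- subdivided 3-connected graphs --- rather than literal 3-connected subgraphs; this is still compatible with the rotation-system framework via the identification of branch vertices.

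Given such a $K$, Whitney's theorem applies: as a finite 3-connected spherical graph, $K$ has an embedding in $\SS$ that is unique up to topological equivalence, so the restricted embeddings $\alpha|_K$ and $\beta|_K$ induce rotation systems on $K$ agreeing up to a global sign $\epsilon(K) \in \{\pm 1\}$. In particular, the cyclic order of $\{x_1, \dots, x_k\}$ in $\omega^\alpha_v$ agrees with that in $\omega^\beta_v$ up to $\epsilon(K)$. Exhausting $G$ by an increasing sequence of such finite 3-connected subgraphs, and noting that the signs $\epsilon(K)$ are forced to be consistent along the exhaustion (because rotation systems restrict to rotation systems on subgraphs) together with the connectedness of $G$, yields the desired global conclusion. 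The main obstacle is the construction of the finite 3-connected subgraph $K$: subgraphs of 3-connected graphs need not themselves be 3-connected (as in $\ZZ^2$, whose finite subgraphs invariably have boundary vertices of small degree), so a careful iterative path-augmentation using Menger's theorem --- or a passage to 3-connected topological minors --- is required, and this is where the full strength of the 3-connectivity hypothesis on $G$ must be invoked.
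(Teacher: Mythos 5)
The paper does not prove this theorem; it cites it directly from Imrich's paper \cite{Imr}. So there is no in-paper argument to compare yours against, and the question is just whether your finite-approximation plan is sound.

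There is a genuine gap at exactly the point you flag: the existence of the finite 3-connected object $K$. As you yourself note with $\ZZ^2$, finite subgraphs of a 3-connected infinite graph typically fail to be 3-connected, so the "iteratively add Menger paths until 3-connected" loop has no reason to terminate in a finite subgraph. The fallback to 3-connected topological minors with $v$ and $x_1,\dots,x_k$ as branch vertices (and the edges $vx_i$ undivided) is a real claim that needs proof, and you do not give it; it is also not obviously strong enough to drive the sign-consistency step along an exhaustion of $G$, since distinct topological minors need not nest. The cleaner and essentially standard fix --- which this paper in fact uses in the immediately following Lemma \ref{lem:whitcore} --- is to avoid subgraphs/minors altogether: take $R$ large enough that all pairs in $B_1(v)$ are joined by three internally disjoint paths inside $B_R(v)$, pass to the block of $v$ in $B_R(v)$, take the 3-connected torso $H$ of its Tutte decomposition containing $B_1(v)$, and use Lemma \ref{lem:margullo} to show that any embedding of $B_R(v)$ in $\SS$ extends over the virtual edges of $H$. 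Then Whitney applies to $H$, and the rotation at $v$ in $H$ is exactly the rotation at $v$ in $G$. The torso is not a subgraph of $G$, which is precisely why it sidesteps the obstruction you ran into. If you replace your ``finite 3-connected subgraph / topological minor'' lemma with this torso construction, the rest of your sign-consistency argument goes through.
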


In the sequel we will use the following corollary of Whitney's theorem.

\begin{lemma}\label{lem:whitcore}
Let $G$ be a countable 3-connected locally finite spherical graph and let us fix $v \in V(G)$. Then, a rotation $\omega$ at the vertex $v$ is induced by a spherical embedding of $G$ if and only if it is induced by a spherical embedding of $B_r (v)$ for every $r \geq 1$.
\end{lemma}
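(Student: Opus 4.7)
The forward direction is immediate: a spherical embedding $\alpha\colon G \hookrightarrow \SS$ realising $\omega$ at $v$ restricts to a spherical embedding of $B_r(v)$, and the rotation it induces at $v$ depends only on $N_G(v) = N_{B_r(v)}(v)$, so it remains equal to $\omega$.

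For the converse, my plan is to apply Imrich's theorem: since $G$ is $3$-connected, countable, locally finite and spherical, any spherical embedding of $G$ induces the same rotation system up to global reversal. Fix such a rotation system $\Omega$ arising from some spherical embedding of $G$ and let $\Omega^-$ denote its reverse; the rotations at $v$ induced by spherical embeddings of $G$ are then precisely $\Omega_v$ and $\Omega_v^-$, so it suffices to show $\omega \in \{\Omega_v, \Omega_v^-\}$. I will argue this by contradiction. Assuming $\omega \notin \{\Omega_v, \Omega_v^-\}$, the key step is to produce a finite $3$-connected subgraph $H \subset G$ with $B_1(v) \subset H$, so that in particular $v \in V(H)$, $N_G(v) \subset V(H)$, and $N_H(v) = N_G(v)$. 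Granting such an $H$, Lemma \ref{prop:sphericalfin2inf} ensures that $H$ is spherical, and Whitney's theorem then implies that any two spherical embeddings of $H$ differ by a homeomorphism of $\SS$. Consequently $H$ admits exactly two rotation systems coming from spherical embeddings, which must be $\Omega|_H$ and $\Omega^-|_H$ (these arise by restricting to $H$ the spherical embeddings of $G$ realising $\Omega$ and $\Omega^-$). Because $N_H(v) = N_G(v)$, the rotation at $v$ in any spherical embedding of $H$ is therefore $\Omega_v$ or $\Omega_v^-$. Now choose $r$ large enough that $V(H) \subset V(B_r(v))$: the hypothesised spherical embedding of $B_r(v)$ inducing $\omega$ at $v$ restricts to a spherical embedding of $H$ inducing $\omega$ at $v$, forcing $\omega \in \{\Omega_v, \Omega_v^-\}$, the desired contradiction.

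The main obstacle I anticipate is the construction of the finite $3$-connected subgraph $H$ containing $B_1(v)$. This is a special case of the classical fact that in a locally finite $3$-connected graph, every finite set of vertices is contained in some finite $3$-connected subgraph. The standard approach starts from a candidate subgraph and iteratively adjoins triples of internally disjoint paths provided by Menger's theorem in $G$ between pairs of its vertices whose $3$-connectivity is not yet internally witnessed, using local finiteness to keep each augmentation finite together with a careful termination argument to control the process. I expect that verifying this construction in detail is the only real technical work in the proof, since the rest is a direct application of Imrich's and Whitney's theorems in conjunction with Lemma \ref{prop:sphericalfin2inf}.
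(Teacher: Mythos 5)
Your overall architecture is close to the paper's -- fix a reference rotation at $v$ coming from a spherical embedding of $G$, pass to a finite $3$-connected object containing $B_1(v)$, and finish with Whitney's theorem -- but the crucial step of producing a finite $3$-connected \emph{subgraph} $H \subset G$ with $B_1(v) \subset H$ is a genuine gap, because the ``classical fact'' you invoke is false. A finite $3$-connected graph on more than three vertices has minimum degree at least $3$, so a locally finite $3$-connected graph in which every finite subgraph has a vertex of degree $< 3$ contains \emph{no} finite $3$-connected subgraph whatsoever. The hexagonal lattice (the $3$-regular tiling of the plane) is a $3$-connected, locally finite, planar, spherical graph with exactly this property: any finite connected subgraph must have a boundary vertex of degree $\le 2$, since a finite $3$-regular connected subgraph would be an entire component. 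So the iterative Menger-path augmentation you sketch cannot terminate with a finite subgraph in general.

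The paper sidesteps this by working with a $3$-connected \emph{torso} of the Tutte decomposition rather than a subgraph: one chooses $R$ large enough that any two vertices of $B_1(v)$ are joined by three internally disjoint paths inside $B_R(v)$, so that $B_1(v)$ lies in the block $[v]_{B_R(v)}$ and in fact in a single $3$-connected Tutte torso $H$ of that block. A torso has extra \emph{virtual} edges that need not be edges of $G$, so Whitney's theorem cannot be applied to it as a subgraph; this is exactly what Lemma~\ref{lem:margullo} is for -- it shows that a spherical embedding of a finite $2$-connected graph induces spherical embeddings of each of its Tutte torsos. With that lemma in hand, the two hypothesised embeddings of $B_R(v)$ (one realising $\omega$ at $v$, one realising the reference rotation $\rho$) push to embeddings of $H$, and Whitney gives $\omega = \rho$ or $\omega = \rho^-$. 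If you replace your nonexistent finite $3$-connected subgraph with this torso and route the embedding through Lemma~\ref{lem:margullo}, your argument becomes the paper's; the appeal to Imrich's theorem at the outset is then unnecessary, since once $\omega \in \{\rho, \rho^-\}$ one already knows $\omega$ is induced by a spherical embedding of $G$ (reversing the orientation of $\SS$ realises $\rho^-$).
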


\begin{proof}
The proof for necessity is trivial. To show sufficiency, we start by letting $\rho$ denote a fixed rotation system at $v$ which is induced by a spherical embedding of $G$.

For $G$ being 3-connected, there exists $R\in \NN$ such that there exist at least three internally disjoint paths in $B_R (v)$ between any two vertices in $B_1 (v)$. It follows that $B_1 (v)\subset H$, where $H$ is a 3-connected torso of the Tutte decomposition  of the block $[v]_{B_R (v)}$ of $v$ in $B_R (v)$. 

By hypothesis, we have embeddings $\alpha_R, \beta_R \colon B_R (v)\hookrightarrow \SS$, inducing $\om$ and $\rho$ respectively. Both embeddings can be restricted to $[v]_{B_R (v)}$, and induce embeddings $\tilde{\alpha}_R, \tilde{\beta}_R\colon H \hookrightarrow \SS$ by Lemma \ref{lem:margullo}. Whitney's theorem implies that either $\om = \rho$ or $\om = \rho^-$. Therefore, $\om$ is induced by a spherical embedding of $G$.
\end{proof}

Let $\FF_2$ denote the field on 2 elements. The \emph{edge space} $\cal E (G)$ of a graph $G$ is the vector space over $\FF_2$ consisting of finitely supported functions $E(G) \rightarrow \FF_2$. The \emph{cycle space} is the subspace of $\cal E (G)$ spanned by indicator functions of the form $\one_C$ where $C$ is a cycle of $G$. A \emph{2-basis} of a spherical graph $G$ is a family of cycles $\cal B$ which is 

\part simple: no edge of $G$ is contained in more than two elements of $\cal B$, and

\part generating: for every simple cycle $C$ of $G$ there exist $B_1, \dots, B_n$ such that $\one_C = \sum_{i=1}^n \one_{B_i}$ in the cycle space.
\trap

An embedding $\alpha\colon G \hookrightarrow \RR^2$  is \emph{accumulation-free} if every compact subset of $\RR^2$ contains at most finitely many vertices and edges of $G$. A cycle $C$ in $G$ is a \emph{facial cycle} of the embedding $\alpha$ if $\alpha(C)$ is the boundary of a connected component of $\RR^2\backslash \alpha(G)$.


\begin{theorem}[Thomassen \cite{Thom80}]\label{tma:Thom80}
Let $G$ be a 2-connected graph. Then, 
\part if $G$ has a 2-basis $\cal B$, then there exists an accumulation-free embedding $G \hookrightarrow \RR^2$ with $\cal B$ as its set of facial cycles, and

\part conversely, the facial cycles of an accumulation-free embedding of $G$ are a 2-basis.
\trap
\end{theorem}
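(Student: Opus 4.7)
The plan is to handle the two directions separately. Part (b) is more direct, so I would tackle it first. Given an accumulation-free embedding $\alpha\colon G \hookrightarrow \RR^2$, simplicity of the family of facial cycles is immediate, since each edge lies on the boundary of at most two faces. For the generating property, let $C$ be any simple cycle of $G$; its image $\alpha(C)$ is a Jordan curve, bounding a compact topological disk $D \subset \RR^2$ by the Jordan curve theorem. Accumulation-freeness forces $D$ to contain only finitely many vertices and edges of $G$, and hence to meet only finitely many bounded faces $F_1,\dots,F_k$. A parity count on edge-face incidences inside $D$ then yields $\one_C = \sum_{i=1}^k \one_{\partial F_i}$ in the cycle space.

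For part (a), I would first reconstruct a rotation system from $\cal B$. Each basis cycle through a vertex $v$ uses exactly two edges incident to $v$, so $\cal B$ induces a pairing structure on $N_G(v)$. Combining simplicity with the fact that 2-connectedness places every edge on a simple cycle expressible through the generating property, one shows that this pairing structure encodes a single cyclic order $\omega_v$ on $N_G(v)$. I would then verify that $\{\omega_v\}_{v \in V(G)}$ is a spherical rotation system. By Lemma \ref{prop:sphericalfin2inf}, it suffices to show that every finite subgraph of $G$ is planar, which follows from MacLane's finite planarity criterion applied to the restriction of $\cal B$ to each finite 2-connected subgraph. A separate uniqueness argument using Whitney's and Imrich's theorems, in the spirit of Lemma \ref{lem:whitcore}, identifies the rotation system of any planar realisation with $\omega$.

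The final step is to realise $\omega$ by an accumulation-free embedding of $G$ whose facial cycles are exactly $\cal B$. I would exhaust $G$ by finite 2-connected subgraphs $G_1 \subset G_2 \subset \cdots$, embed each $G_n$ in a compact disk $D_n \subset \RR^2$ consistently with $\omega$, and at each stage identify the faces of the embedded $G_n$ whose bounding cycles are not in $\cal B$: these are the faces destined to contain further vertices of $G$, and their regions must be chosen so that subsequent vertices are pushed outward. The main obstacle, which I expect to require the bulk of the work, is precisely this metric control: by Imrich's theorem, $\omega$ alone determines the embedding only up to combinatorial equivalence, so accumulation-freeness must be engineered by hand, using $\cal B$ as the combinatorial marker of which faces are bounded versus ``at infinity''. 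The subtlety compounds when $G$ has multiple ends, forcing several distinct infinite faces to be laid out simultaneously at each stage while maintaining consistency with the previously embedded $G_n$.
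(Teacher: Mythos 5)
This statement is attributed to Thomassen \cite{Thom80} and is cited in the paper without proof, so there is no in-paper argument to compare against; you are effectively being asked to reprove Thomassen's theorem from scratch. Your sketch of part (b) is sound: the Jordan-curve plus parity-count argument is the standard one, and accumulation-freeness is exactly what makes the number of bounded faces inside the disk finite.

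Part (a) has a genuine gap, and in fact more than one. First, the claim that the edge-pairing induced at a vertex $v$ by the members of $\cal B$ through $v$ assembles into a \emph{single} cyclic order on $N_G(v)$ is nontrivial; simplicity only tells you each edge lies in at most two basis cycles, so a priori the pairing could decompose $N_G(v)$ into several disjoint cyclic blocks, or some edges could lie in only one or zero basis cycles. Ruling this out is essentially the content of MacLane's criterion and needs an actual argument, not just an appeal to 2-connectedness. Second, and more seriously, your proposed ``uniqueness argument using Whitney's and Imrich's theorems'' cannot work as stated: both theorems require 3-connectivity, and the theorem you are proving assumes only 2-connectivity. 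A 2-connected planar graph generally has many combinatorially inequivalent embeddings (flip a subgraph across a 2-cut), so there is no unique rotation system to pin down; you would have to pass through the Tutte decomposition into 3-connected torsos, as Lemma \ref{lem:margullo} does, and then reassemble, which is a substantial piece of work you have not outlined. Third, you rightly flag the accumulation-free realisation as the bulk of the work, but as written it is only an intention, not an argument; the difficulty of laying out several infinite faces consistently is precisely where Thomassen's original proof earns its keep, and your sketch gives no mechanism for it. So the overall architecture (rotation system, then finite exhaustion) is a plausible route, but as it stands each of the three stages of part (a) is either incorrect as stated or left unproved.
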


The following is an example of a planar graph with no 2-basis.

\begin{example}\label{ex:miticu1}
For each $n \in \ZZ$, consider the set 
$$
V_n := \{(2^n,2^n), (2^n,-2^{n}), (-2^{n}, 2^n), (-2^{n}, -2^{n})\subset \RR^2.
$$
We define a graph $G$ by letting $V(G) = \bigcup_{n \in \ZZ} V_n$ and defining an edge between $(x,y)$ and $(x',y')$ if and only if both vertices agree in exactly one coordinate or $x/x' = y/y' \in \{1/2,2\}$. Let $\alpha$ denote the embedding of $G$ in $\RR^2$ obtained by drawing the edges of $G$ as straight segments. 

Let $C$ be a cycle of the form $C= G[V_n]$ for some $n \in \ZZ$. It is easy to see that $G$ is 3-connected, so by Imrich's Theorem every embedding of $G$ in $\RR^2$ is combinatorially equivalent in $\SS$ to $\alpha$. Since any embedding $\beta \colon G \hookrightarrow \RR^2$ is combinatorially equivalent to $\alpha$ in $\SS$, it is easy to see that both connected components of $\RR^2 \backslash \beta(C)$ will  contain infinitely many vertices of $G$. Hence, $G$ does not admit an accumulation-free embedding. By Thomassen's theorem, $G$ does not admit a 2-basis.
\end{example}


%

%

\subsection{Boolean ring of a graph}

Let $G$ be a conected locally finite graph. The \emph{coboundary} of a function $f \colon V(G) \rightarrow \FF_2$ is the set $\delta f := \{(u,v)\in E(G) : f(u)\neq f(v)\}$. The \emph{Boolean ring} of $G$, denoted by $\cal B G$ is the ring of functions $V(G) \rightarrow \FF_2$ with finite coboundary under the usual operations of pointwise addition and multiplication. The Boolean ring is equipped with an involution defined by $f^* := 1 + f$, where $1$ denotes the constant 1 function. For each $n\in \NN$ we let $\cal B_n G$ denote the subring of $\cal B G$ generated by those $f\in \cal B G$ with $|\delta f| \leq n$.

The reader is referred to \cite[Section II.2]{DD} for an introduction to the Boolean ring of a graph. Our aim in this section is to present some helpful facts about the Boolean ring.

\begin{lemma}
Let $G$ be a connected locally finite graph and $f \in \cal B G$. For every ray $R$ there exists a tail $R'$ of $R$ such that $f$ is constant on $V(R')$. Moreover, if $R,Q$ are equivalent rays of $G$, then $f$ agrees on tails of $R$ and $Q$. 
\end{lemma}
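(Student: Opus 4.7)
The plan is to exploit the finiteness of $\delta f$ in two different guises: as a constraint on edges along a ray and as a constraint on paths avoiding a finite vertex set.

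For the first assertion, I would write $R = (x_i)_{i \in \NN}$ and simply observe that each index $i$ with $f(x_i) \neq f(x_{i+1})$ contributes an edge $(x_i, x_{i+1})$ to $\delta f$. Since $\delta f$ is finite, only finitely many such indices exist, so there is some $N \in \NN$ with $f(x_i) = f(x_{i+1})$ for all $i \geq N$. The tail $R'$ starting at $x_N$ then has $f$ constant on $V(R')$.

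For the second assertion, let $R, Q$ be equivalent rays and, by the first part, choose tails $R_0$ of $R$ and $Q_0$ of $Q$ on which $f$ takes constant values $a$ and $b$ respectively. The key bridge is the finite vertex set
\[
F := \{v \in V(G) : v \text{ is an endpoint of some edge in } \delta f\},
\]
which is finite because $G$ is locally finite and $\delta f$ is finite (actually just because $\delta f$ is finite). By equivalence of ends applied to $F$, there are tails $R_1 \subset R_0$ and $Q_1 \subset Q_0$ lying in a common connected component $C$ of $G \backslash F$. Pick a path $P$ in $C$ from some $u \in V(R_1)$ to some $v \in V(Q_1)$.

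The main (only) point is that $P$ avoids $F$, hence every edge of $P$ has both endpoints outside $F$, and therefore no edge of $P$ lies in $\delta f$. Consequently $f$ is constant along $P$, yielding $a = f(u) = f(v) = b$, so $f$ agrees on the tails $R_0$ and $Q_0$, as required. I do not anticipate a serious obstacle here: the argument is essentially the translation of the edge-cut $\delta f$ into the vertex-cut $F$, which is precisely the move needed to invoke the ray-equivalence definition (which is phrased in terms of finite vertex removals).
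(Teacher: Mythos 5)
Your proof is correct and follows essentially the same two-step route the paper takes: bound the number of sign changes along a ray by $|\delta f|$, then use ray equivalence applied to the finite vertex set incident to $\delta f$ to produce a connecting path on which $f$ is constant. The paper states this very tersely; you have merely supplied the details (in particular the explicit choice of $F$), with no difference in substance.
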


\begin{proof}
To prove the first clause, observe that, for $f \in \cal B G$, there are only finitely many edges of $R$ in $\delta f$. To prove the second clause, observe that for ray equivalence, there exist paths between tails of $R$ and $Q$ with edges outside the coboundary of $f$.
\end{proof}

It follows that a function $f \in \cal B G$ extends to a well-defined function $\tilde f : \ends (G) \rightarrow \FF_2$. Given two ends $\omega,\rho \in \ends(G)$, we say that an element $f \in \cal B G$ \emph{separates} the ends $\omega$ and $\rho$ if $\tilde f (\omega) \neq \tilde f(\rho)$.

\begin{prop}\label{prop:sep}
Let $G$ be a connected locally finite $n$-accessible graph for some $n\in \NN$. If $E$ is a generating set of $\cal B_n G$, then for any two ends $\omega,\rho \in \ends (G)$ there exists $f\in E$ separating $\omega$ and $\rho$.
\end{prop}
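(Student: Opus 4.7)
The plan is to combine the accessibility hypothesis with the observation that passing to the end is an $\FF_2$-algebra homomorphism, so separating functions must already appear among the generators.

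First, I would use $n$-accessibility to pick a set $B \subset E(G)$ with $|B| \le n$ such that any ray in $\omega$ and any ray in $\rho$ have tails in different components of $G \setminus B$. Let $U \subset V(G)$ be the union of the vertex sets of those components of $G \setminus B$ that contain a tail of some ray representing $\omega$. Then $\one_U \in \cal B G$ because $\delta \one_U \subset B$; in particular $|\delta \one_U| \le n$, so $\one_U \in \cal B_n G$. Moreover $\one_U$ separates the two ends: a tail of an $\omega$-ray lies in $U$ by construction, so $\widetilde{\one_U}(\omega) = 1$; and a tail of a $\rho$-ray lies in a component of $G \setminus B$ disjoint from every $\omega$-tail, hence outside $U$, giving $\widetilde{\one_U}(\rho) = 0$.

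Next I would observe that the map $f \mapsto \tilde f$, which exists thanks to the previous lemma, is a ring homomorphism from $\cal B G$ to the ring $\FF_2^{\ends(G)}$. This follows at once from the previous lemma: given $f, g \in \cal B G$, any ray eventually avoids the finite sets $\delta f$ and $\delta g$, so $f$, $g$, $f+g$ and $fg$ are all eventually constant on the ray, and the pointwise ring operations pass to the limit. Since $|\delta \one| = 0$, the constant function $1$ lies in $\cal B_n G$, so $\cal B_n G$ is a unital $\FF_2$-subalgebra of $\cal B G$, and hence $E$ generates $\cal B_n G$ in the sense that every element of $\cal B_n G$ is a polynomial over $\FF_2$ in finitely many elements of $E$.

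To finish, I would argue by contradiction: suppose no $e \in E$ separates $\omega$ from $\rho$, i.e.\ $\tilde e(\omega) = \tilde e(\rho)$ for all $e \in E$. Writing $\one_U = P(e_1,\dots,e_k)$ for some $e_i \in E$ and some polynomial $P \in \FF_2[x_1,\dots,x_k]$, and applying the ring homomorphism, one gets
\[
\widetilde{\one_U}(\omega) = P(\tilde e_1(\omega),\dots,\tilde e_k(\omega)) = P(\tilde e_1(\rho),\dots,\tilde e_k(\rho)) = \widetilde{\one_U}(\rho),
\]
contradicting the separation established in the first step.

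There is no significant obstacle here; the only points that need a little care are (i) checking that $\one_U$ really lies in $\cal B_n G$ (which reduces to $\delta \one_U \subset B$) and (ii) checking that $\widetilde{\cdot}$ respects both ring operations, which is where the preceding lemma on constancy of $f$ on tails is used.
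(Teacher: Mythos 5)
Your proposal is correct and takes essentially the same route as the paper: produce a separating element of $\cal B_n G$ from the $n$-accessibility hypothesis, then use the fact that evaluation at ends is a ring homomorphism to conclude that a separator cannot be a polynomial in non-separating generators. The paper's proof simply asserts ``if no element in $f_1,\dots,f_m$ separates the two ends, then $f$ cannot separate the ends,'' leaving both the explicit construction of the separating element $\one_U$ and the homomorphism property of $f \mapsto \tilde f$ implicit; you have spelled out exactly these two points, which is a reasonable unpacking of the same argument.
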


\begin{proof}
By the $n$-accessibility hypothesis, there exists $f \in \cal B_n G$ separating $\omega$ and $\rho$. Since $E$ generates $\cal B_n G$, there exist $f_1,\dots,f_m \in E$ such that $f \in \left<f_1, \dots, f_m \right>_{\cal B G}$. If no element in $f_1,\dots,f_m$ separates the two ends, then $f$ cannot separate the ends, what is a contradiction. 
\end{proof}

Let $E \subset \cal B G$. We say that $E$ is \emph{nested} if one of the following holds for any two $f,g\in E$: $$
f\geq g,\ g\geq f,\ f^* \geq g,\ \text{or}\ g^* \geq f.
$$
We say that $E$ is a \emph{tree set} if for any two $u,v\in V(G)$ there exist finitely many $f\in E$ such that $f(u) \neq f(v)$.

We define a map $\Psi : \Sep (G) \rightarrow \cal B G$ by letting $\Psi (S,B) \in \cal B G$ be the unique element in $\cal B G$ satisfying $\delta \Psi (S,B) = B$ and $\Psi (S,B) (S) = 1$.

\begin{lemma}
The map $\Psi$ satisfies the following properties. 
\part $\Psi$ is a well-defined bijection,
\part for $\cal S\subset \Sep (G)$ we have that $\cal S$ is nested if and only if $\Psi (\cal S)$ is nested, and 
\part if $\Psi (\cal S)$ is a tree set then $\cal S$ has property ($\ast$).
\trap
\end{lemma}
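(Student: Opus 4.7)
My plan is to establish the three parts in sequence, working inside the Boolean ring $\cal B G$ of the connected locally finite graph $G$.

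For (a), well-definedness comes from exhibiting $\Psi(S,B)$ explicitly as the indicator $\one_{V(A_2)}$, where $(A_1, A_2) = \Sides(S, B)$. A straightforward edge-by-edge check (edges contained in $V(A_2)$, contained in $\bar C = V(A_1) \setminus S$, or joining $S$ to $\bar C$) yields $\delta \one_{V(A_2)} = B$ and $\one_{V(A_2)}(S) = 1$; finite coboundary follows from $B \subset \partial S$ and local finiteness of $G$. For uniqueness, if $f, g$ both satisfy the conditions, then $\delta(f+g) = \delta f \triangle \delta g = \emptyset$, so $f + g$ is constant on components of $G$; connectedness of $G$ and $(f+g)(s) = 0$ for any $s \in S$ force $f = g$. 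For bijectivity I would construct the inverse: given $f \in \cal B G$, set $B := \delta f$ and let $S$ be the set of vertices $v$ with $f(v) = 1$ that are incident to some edge in $\delta f$. The closure condition in the definition of separation follows because two vertices in the same component of $G \setminus S$ are joined by a path avoiding $\delta f$, hence have the same $f$-value. One then verifies $\Psi(S, B) = f$, and recoverability of $S$ and $B$ from $f$ gives injectivity.

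For (b), I would translate the partial order between separations into the Boolean ring. The key observation is that with the canonical choice of $S$, the comparison $(S, B) \leq (S', B')$ is equivalent to a pointwise inequality between $\Psi(S, B)$ and $\Psi(S', B')$: the inclusion $V(A'_2) \subset V(A_2)$ yields this inequality directly, and the inclusion $V(A_1) \subset V(A'_1)$ comes for free because any $v \in S$ adjacent via a coboundary edge to a vertex with $f=0$ also has $g = 0$ there by $f \geq g$, forcing $v \in V(A'_1)$. The three other cases (swapping, or applying $^-$ to one of the two separations) are treated symmetrically and match the remaining Boolean-ring nestedness conditions. Pairwise nestedness of $\cal S$ thus transfers under $\Psi$ in both directions.

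For (c), I would argue by contradiction. Suppose $\Psi(\cal S)$ is a tree set but $\cal S$ fails property $(\ast)$: then there exist $(S, B) < (S', B') \in \cal S$ and infinitely many distinct $(S_n, B_n) \in \cal S$ with $(S, B) < (S_n, B_n) < (S', B')$. Writing $f := \Psi(S, B)$, $g := \Psi(S', B')$, and $h_n := \Psi(S_n, B_n)$, the monotonicity from (b) gives $g \leq h_n \leq f$ pointwise. Since $(S, B)$ and $(S', B')$ are proper separations, the sets $g^{-1}(1) = V(A'_2)$ and $f^{-1}(0) = \bar C$ are non-empty; pick $u \in g^{-1}(1)$ and $v \in f^{-1}(0)$. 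Then $h_n(u) = 1$ and $h_n(v) = 0$ for every $n$, so each of the infinitely many pairwise distinct $h_n$ (distinct by injectivity from (a)) separates $u$ from $v$, contradicting the tree set property.

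The main obstacle, I expect, lies in (b): carefully matching the four nestedness conditions for separations with those for $\cal B G$ requires a delicate case analysis, since the involution $^-$ on $\Sep(G)$ does not correspond cleanly to the involution $^*$ on $\cal B G$ under $\Psi$ ($\Psi(S, B)$ and $\Psi((S, B)^-)$ agree on $S$ but disagree elsewhere), so the correspondence only holds modulo bookkeeping on the finite adhesion sets.
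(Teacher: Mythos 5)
Your treatments of (a) and (c) are essentially correct. For (a), exhibiting $\Psi(S,B) = \one_{V(A_2)}$ and inverting via $B := \delta f$, $S := \{v\in V(G) : f(v)=1 \text{ and } v \text{ is an end-vertex of an edge in } \delta f\}$ is exactly the paper's argument. For (c), your proof is in fact a small improvement on the paper's: the paper splits into the cases $S=S'$ and $S\neq S'$, whereas you simply choose $u\in V(A_2')$ and $v\in \overline{C}$ and then use only the order-reversing half of (b) (that $(S,B)\leq(S',B')$ forces $\Psi(S',B')\leq\Psi(S,B)$), which is the unproblematic direction -- so your (c) does not inherit the difficulty discussed next.

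For (b), the worry you flag at the end is not merely bookkeeping on finite adhesion sets, and your earlier assertion that ``the three other cases are treated symmetrically and match the remaining Boolean-ring nestedness conditions'' does not hold up. The comparisons $(S,B)\leq(S',B')$ and its reverse do correspond to $g\leq f$ and $f\leq g$, as you argue. But $(S',B')^- \leq (S,B)$ unwinds to $V(A_2)\subset V(A_1')$, which only gives $f^{-1}(1)\cap g^{-1}(1)\subset S\cap S'$ -- a finite set rather than the empty set -- and this is strictly weaker than every one of the four Boolean nestedness conditions. In fact the forward implication of (b) fails under the definitions as written: let $G$ be the tree on $\{1,\dots,6\}$ with edges $(1,3),(2,3),(3,5),(4,5),(5,6)$, and take $(S,B)=(\{3,5\},\{(1,3),(2,3)\})$ and $(S',B')=(\{3,5\},\{(1,3),(4,5),(5,6)\})$. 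Then $(S',B')^- = (\{3,5\},\{(2,3)\})<(S,B)$, so this pair of separations is nested, yet $\Psi(S,B)=\one_{\{3,4,5,6\}}$ and $\Psi(S',B')=\one_{\{2,3,5\}}$ are not: the sets where they take the value $1$ intersect in $\{3,5\}$ and jointly miss the vertex $1$, so neither containment, nor disjointness, nor covering of supports holds. The paper's one-line ``follows directly from the definitions'' does not address this either, so the obstruction may well lie in the lemma statement itself rather than only in your plan; in any case, part (b) of your proposal does not close as written.
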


\begin{proof}
\part Indeed, the element $\Psi (S,B)$ is completely determined by definition. Let $u \in V(G)$. If $u \in S$ then $\Psi (S,B)(u) = 1$. If $u \in V(G) \backslash S$, then $u$ is connected by some path $P$ to $S$ with at most one edge in $B$. For $(S,B)$ being a separation, whether $P$ contains an edge in $B$ or not is independent of the choice of $P$. If $P$ contains an element in $B$, then $\Psi (S,B) (u) = 0$ and otherwise $\Psi (S,B) = 1$. 

Consider $f\in \cal B G$. Let $B := \delta f$ and $$
S:=\{u\in V(G) : f(u) = 1, \exists e\in B\ \text{such that}\ u\in e\}.
$$
Then $(S,B)$ is a separation and $\Psi (S,B) = f$. This identification determines the pre-image of $f$ uniquely, proving injectivity.

\part For a separation $(S,B)$, let $(A_1,A_2)$ denote its sides. Then $\Psi (S,B)$ is the element of $\cal B G$ determined by being the constant 1 function on $A_2$. The proposition now follows directly from the definitions.

\part Fix $(S,B)$ and $(S',B')$, and suppose that $(S'',B'')$ is such that $(S,B) < (S'',B'') < (S',B')$. If $S = S'$ then there are only finitely many possible $(S'',B'')$ by local finiteness of $G$. If $S\neq S'$, then $\Psi (S'',B'')$ takes different values on at least two distinct elements of $S$ and $S'$. By the tree set property, there can exist at most finitely many such $(S'',B'')$.

\trap
 
\end{proof}

\begin{example}\label{ex:notree}
There exist $\cal S$ sets of separations with property ($\ast$) such that $\Psi (\cal S)$ is not a tree set. For instance, in the graph $G$ of Example \ref{ex:miticu1}, consider vertices $u = (4,4)$ and $v= (1,1)$, together with the family of separations $(S_n,B_n)$ given as follows. For each $n\in \NN$, let $S_n$ be the set of vertices obtained as the union of the sets $\{(2,2),(2,-2),(-2,2)\}$, $\{(2^{-n},2^{-n}),(2^{-n},-2^{-n}),(-2^{-n},2^{-n})\}$, and the vertex sets of the shortest paths connecting $(2,-2)$ with $(2^{-n},-2^{-n})$ and $(-2,2)$ with $(-2^{-n},2^{-n})$. The sets $S_n$ are the vertex sets of cycles $C_n$ in $G$, and each of these $C_n$ defines an inside and outside by the embedding $\alpha$. We may choose $B_n$ to be, say, the outward-pointing edges for each of these $C_n$. 

Each of the $(S_n,B_n)$ separates the vertices $u$ and $v$, so $\{\Psi (S_n,B_n)\}_{n\in \NN}$ is not a tree set. However, the separation order on the $(S_n,B_n)$ gives an $\NN$-order with finite chains, so $\{(S_n,B_n)\}_{n\in \NN}$ has property ($\ast$).
\end{example}

An element $f \in \cal B_n G$ is $n$-thin if $f\not\in \cal B_{n-1} G$ and $|\delta f| = n$. An element $f\in \cal B G$ is \emph{thin} if it is thin for some $n\in \NN$. Thin elements are \emph{connected}, where $f\in \cal B G$ is said to be connected if the induced graph $G[\{u\in V(G) : f(u) = 1\}]$ is connected.

\begin{prop}\label{prop:locfin}
For every $n\in \NN$ and $u,v \in V(G)$ there exist finitely many thin elements $f\in \cal B_n G$ separating $u$ and $v$.
\end{prop}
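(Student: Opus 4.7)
The plan is to translate the problem into a finiteness statement about ``tight'' edge cuts, and then prove that statement by induction. If $f$ is $k$-thin and lies in $\cal B_n G$, then $f \notin \cal B_{k-1} G$ combined with the inclusion $\cal B_{k-1} G \supseteq \cal B_n G$ (valid when $k > n$) forces $k \leq n$, so in particular $|\delta f| = k \leq n$. Moreover, thinness gives that $G[f^{-1}(1)]$ is connected, and the involution $f^* = 1 + f$ is also $k$-thin: it has the same coboundary as $f$, it lies in $\cal B_k G$, and the assumption $f^* \in \cal B_{k-1} G$ would force $f = 1 + f^* \in \cal B_{k-1} G$ whenever $k \geq 1$. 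Consequently $G[f^{-1}(0)]$ is connected as well, so $G \setminus \delta f$ has exactly two connected components, and $f$ separates $u$ from $v$ precisely when they lie in different ones. Each such ``tight'' cut determines $f$ up to which side is labelled $1$, so it suffices to prove the following claim.

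\textbf{Claim.} For any locally finite graph $H$, any $u, v \in V(H)$ in the same connected component of $H$, and any $m \in \NN$, there are only finitely many $B \subseteq E(H)$ with $|B| \leq m$ such that $H \setminus B$ has exactly two connected components, with $u$ and $v$ in different ones.

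I would prove this by induction on $m$. The case $m = 0$ is immediate since $u$ and $v$ lie in the same component of $H$. For the inductive step, fix a finite $u$-$v$ path $P$ in $H$ (finite by local finiteness). Any such $B$ must contain at least one edge of $P$, so it suffices to bound, for each of the finitely many $e \in E(P)$, the number of valid $B$ containing $e$. Writing $B = \{e\} \cup B'$ with $|B'| \leq m - 1$, I split into two cases depending on whether $u$ and $v$ lie in the same component of $H \setminus \{e\}$. If they do, then $B'$ is itself a valid tight cut in the locally finite graph $H \setminus \{e\}$ of size at most $m - 1$, and the inductive hypothesis applies. If they do not, then $e$ is a bridge separating $u$ from $v$ in $H$, and a short direct analysis shows that the only connected $C \ni u$ with $V(H) \setminus C$ connected, $v \notin C$, and $e \in \partial C$ is the $u$-component of $H \setminus \{e\}$, forcing $B = \{e\}$.

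The main obstacle is precisely the rigidity argument in the second case of the inductive step: it is essential that \emph{both} sides of the cut be connected, as otherwise one could produce infinite families of coboundary-bounded elements separating $u$ from $v$ (for instance, the indicators of finite intervals of $\ZZ$ all have coboundary $2$ and separate any two distinct vertices far enough apart). This two-sided connectedness is exactly what thinness delivers via the involution $f \mapsto f^*$, and it is the crucial ingredient that closes the induction.
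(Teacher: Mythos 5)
Your reduction to a finiteness statement about edge cuts, and the induction along a fixed $u$-$v$ path, parallels the paper's proof (which instead inducts on $n$ via an intermediate lemma counting thin $f$ with $e \in \delta f$ for each fixed edge $e$, and then applies it to the edges of a $u$-$v$ path). However, the Claim as you have stated it is false, and this breaks the induction. Take $H = \ZZ \times \{0,1\}$ to be the bi-infinite ladder (rails and rungs), $u = (0,0)$, $v = (5,0)$, $m = 3$, and for each $k \geq 3$ put $B_k = \{((2,0),(3,0)),\,((2,1),(3,1)),\,((k,0),(k,1))\}$. Then $H \setminus B_k$ has exactly two connected components (the left half-ladder, and the right half-ladder with one rung removed, which is still connected), separating $u$ from $v$, and $|B_k| = 3$; so there are infinitely many valid $B$. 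The source of the trouble is that your Claim allows $B$ to contain edges lying entirely inside one of the resulting components; such a $B$ is not of the form $\partial C$ for either side $C$, and your Case 2 rigidity argument -- which is phrased in terms of a set $C$ with $V(H) \setminus C$ connected and $e \in \partial C$ -- simply does not apply to it even though the stated Claim counts it. Consequently the inductive hypothesis, applied in Case 1 to $H \setminus \{e\}$ at size $m-1$, is being invoked for a statement that is already false there.

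The fix is to restrict the Claim to genuine boundary cuts: count connected $C \subseteq V(H)$ with $u \in C$, $v \notin C$, $V(H) \setminus C$ connected, and $|\partial C| \leq m$. This restricted family is exactly what thin $f$ supply via $C = f^{-1}(1)$, as your reduction (correctly) establishes using the involution $f \mapsto f^*$, so it still suffices for the proposition. With this restatement the induction closes: in Case 1, both $C$ and $V(H) \setminus C$ remain connected in $H \setminus \{e\}$ because $e \in \partial C$ is an edge of neither induced subgraph, and $\partial_{H \setminus \{e\}} C = \partial_H C \setminus \{e\}$ has size at most $m-1$; in Case 2 your rigidity argument forces $C$ to be the $u$-component of $H \setminus \{e\}$, giving a unique cut. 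After that correction the proof is sound and is essentially the same mechanism as the paper's.
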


\begin{proof}
The following argument follows along the lines of that in \cite[Proposition 4.1]{Thom80}. First, we show that for every edge $e \in E(G)$ there exist at most finitely many $f\in \cal B_n G$ with $e\in \delta f$. The proof follows by induction. The case $n=1$ is trivial. For $n > 1$, suppose that $f\in \cal B_n G$ is thin and satisfies that $e\in \delta f$. For thinness and $n > 1$, there must exist a path $P$ between the end-vertices of $e$ not containing $e$. We have that $f \in \cal B_{n-1} (G\backslash \{e\})$ is still thin, and that $\delta f$ must contain some edge of $P$. Applying the induction hypothesis, there exist finitely many such possible $f \in \cal B_{n-1} (G\backslash \{e\})$ with $\delta f$ containing some edge of $P$. Therefore, there exist finitely many $f\in \cal B_n G$ for which $e$ is contained in $\delta f$. 

The proof is concluded by a similar argument. Fix an arbitrary path $P$ with end-vertices $u$ and $v$. If $f\in \cal B_n G$ separates $u$ and $v$, then there exists $e\in E(P)$ such that $e\in \delta f$. Therefore, there can exist only finitely many possible $f \in \cal B G$ separating $u$ and $v$.
\end{proof}

\subsection{Borel graphs and equivalence relations}

A \emph{standard Borel space} is a pair $(X,\cal B)$ where $X$ is a set and $\cal B$ a $\sigma$-algebra on $X$ generated by a Polish topology on $X$. The standard Borel space of all finite subsets of a standard Borel space $X$ with cardinality at most $k\in \NN$ is denoted by $[X]^{\leq k}$, and $[X]^{<\infty}$ denotes the union $\bigcup_{k\in \NN } [X]^{\leq k}$. A \emph{Borel probability measure} $\mu$ is a measure on $(X, \cal B)$ such that $\mu (X) = 1$. Whenever it is clear from the context, we avoid explicit reference to the $\sigma$-algebra. 

Let $f\colon X \rightarrow Y$ be a function between sets $X,Y$. Recall that a \emph{section} of $f$ is a function $g\colon Y \rightarrow X$ such that $f\circ g$ is the identity on the range of $f$. The following theorem compiles some useful facts about functions between standard Borel spaces. For details, the reader is referred to \cite{kech}.

\begin{theorem} Let $X,Y$ be standard Borel spaces and $f\colon X\rightarrow Y$ a function. Then $f$ is Borel if and only if $\graph (f)$ is Borel. Furthemore, if $f$ is Borel, then the following  propositions hold.

\part (Lusin-Souslin) If $A \subset X$ is Borel and $f|_A$ is injective, then $f(A)$ is Borel. Moreover, if $A\subset X$ is Borel and $f|_A$ is countable-to-one, then $f(A)$ is Borel.

\part (Lusin-Novikov) If $f$ is countable-to-one, then it admits a Borel section.

\trap
\end{theorem}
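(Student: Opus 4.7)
My plan is to assemble these results from classical descriptive set theory, following the development in Kechris's \emph{Classical Descriptive Set Theory}. The first equivalence is easiest: if $f$ is Borel, then $\graph(f)$ is the preimage of the diagonal $\Delta_Y \subset Y \times Y$ under the Borel map $(x,y)\mapsto (f(x),y)$, hence Borel. For the converse, note that the projection $\pi_X \colon \graph(f) \to X$ is a Borel bijection, so for each Borel $B \subset Y$ the identity $f^{-1}(B) = \pi_X(\graph(f) \cap (X \times B))$ exhibits $f^{-1}(B)$ as the injective Borel image of a Borel set, hence Borel by the injective case of (a) applied to $\pi_X$.

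The core technical step is the injective case of Lusin-Souslin. The plan is to first establish the analytic separation theorem: any two disjoint analytic subsets of a standard Borel space are separated by a Borel set, proven by a tree-scheme argument. With this in hand, $f(A)$ is analytic as a projection of $\graph(f|_A)$. Fixing a Polish realization of $A$ and refining a countable basis into Borel partitions $A = \bigsqcup_n A_n^{(k)}$ with diameters tending to $0$, one uses injectivity of $f|_A$ to see that the images $\{f(A_n^{(k)})\}_n$ are pairwise disjoint analytic sets, which can therefore be separated by pairwise disjoint Borel sets $\{B_n^{(k)}\}_n$. A standard argument then identifies $f(A)$ with $\bigcap_k \bigcup_n B_n^{(k)}$, so $f(A)$ is Borel. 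The countable-to-one refinement follows from Lusin-Novikov by partitioning $A$ into countably many Borel pieces on which $f$ is injective and taking the union of their images.

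For Lusin-Novikov (b), I would fix a countable basis $\{U_n\}_{n \in \NN}$ for a Polish realization of $X$ and recursively select, for each $y \in f(X)$, a nested sequence of basis elements with diameters tending to $0$, each chosen as the least-indexed basis element meeting $f^{-1}(y)$ and lying inside the previously chosen one. Countability of $f^{-1}(y)$ guarantees this construction, and the intersection is a singleton $\{s(y)\} \subseteq f^{-1}(y)$; Borel measurability of $s$ follows because each stage's selection can be phrased as a Borel condition on $y$, using the countable-to-one case of (a) to verify that the relevant stage-domains are Borel.

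The main obstacle is the injective case of (a): the underlying analytic separation theorem is the one genuine piece of combinatorial work, and the scheme-based argument through disjoint analytic sets requires some care. Everything else then assembles by relatively mechanical reductions, and in the write-up I would lean on Kechris for the detailed bookkeeping.
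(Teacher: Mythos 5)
First, a note on framing: the paper does not prove this theorem at all --- it states it as background and refers the reader to Kechris \cite{kech} --- so there is no ``paper's proof'' to compare against, only the classical development. Your outline of the graph characterization and of the injective Lusin--Souslin theorem is essentially the standard Kechris route (analytic separation, a Lusin scheme on $A$, disjoint Borel separators); you do gloss over the step where one refines the Polish topology on $X$ to make $f$ continuous so that a branch of the scheme actually converges to a point whose image is the limit of the separators, but you explicitly defer that bookkeeping to Kechris, which is fine for an outline.

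However, your proposed proof of Lusin--Novikov has a genuine gap. You build, for each $y\in f(X)$, a nested sequence of basis elements with shrinking diameters, each chosen to meet $f^{-1}(y)$, and you assert that the single point in the intersection lies in $f^{-1}(y)$, crediting this to ``countability of $f^{-1}(y)$''. That implication is false: a countable Borel fiber need not be closed, so the greedy scheme can converge to an accumulation point of $f^{-1}(y)$ that is not itself in $f^{-1}(y)$ (think of a fiber of the form $\{1/n : n\ge 1\}$, where the nested basis elements can funnel down to $0$). Countability guarantees the construction can proceed at each finite stage, but it says nothing about the limit landing in the fiber. A correct proof should instead go through the standard form of Lusin--Novikov --- decompose $\graph(f)$ (or its flip) into countably many graphs of partial Borel functions $g_n\colon Y\rightharpoonup X$, which is proved by a boundedness or reflection argument, not a scheme --- and then define the section by $s(y)=g_n(y)$ for the least $n$ with $y\in\dom(g_n)$. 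This reduction is short once the countable-union-of-graphs form is available, so the extra work is exactly in that decomposition, which your sketch does not address.
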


An equivalence relation $R$ on a standard Borel space $X$ is Borel if it is a Borel subset $R \subset X \times X$. We say that a Borel equivalence relation $R$ is \emph{countable} (resp. \emph{finite}) if its equivalence classes are countable (resp. finite).   The $R$-equivalence class of $x\in X$ is denoted by $[x]_R$. The \emph{full group} $[R]$ of a countable Borel equivalence relation $R$ on a standard Borel space $X$ is the Borel set of those Borel bijections $f\colon X\rightarrow X$ such that $\graph (f) \subset R$. We say that a Borel probability measure $\mu$ is $R$\emph{-invariant} if $f_* \mu = \mu$ for every $f \in [R]$. A \emph{p.m.p.~countable Borel equivalence relation} is a pair $(R,\mu)$ where $R$ is a countable Borel equivalence relation and $\mu$ is an $R$-invariant Borel probability measure on the vertex space of $R$. A \emph{graphing} of such $(R,\mu)$ is a Borel graph $\cal G$ such that $\Rel (\cal G) = R$ a.s.

The set of equivalence classes of $R$, denoted by $X/R$, admits a \emph{quotient Borel structure} where $A \subset X/R$ is Borel if and only if $\bigcup A \subset X$ is Borel. We say that $R$ is \emph{smooth} if $X/R$ is a standard Borel space with its quotient Borel structure. A \emph{complete section} for $R$ is a Borel subset $A\subset X$ intersecting every orbit. A \emph{Borel transversal} for $R$ is a complete section $A \subset X$ intersecting each orbit of $R$ in exactly one point. A \emph{Borel selector} of $R$ is a Borel map $f:X\rightarrow X$ such that $\graph(f)\subset R$ and the image of $f$ is a Borel transversal. A Borel equivalence relation admits a Borel selector if and only if it admits a Borel transversal. If $R$ has a Borel transversal, then it is smooth. Finite Borel equivalence relations admit Borel transversals. The reader is referred to \cite{KM} for further details on Borel equivalence relations.

An example of smooth Borel equivalence relation $R$ is defined on $\bigcup_{n\in \NN} X^n$ by letting $x \sim_R y$, for $x\in X^n$ and $y \in X^m$, if and only if $m = n$ and there is $k \in \{0, 1,\dots,n-1\}$ such that, for every $l \in \{1, \dots, n\}$, the equality $x_{l - k} = y_l$ holds, where sum in the indices is modulo $n$. The equivalence relation $R$ is Borel and finite, so $\bigcup_{n\in \NN} X^n/R$ is a standard Borel space with its quotient Borel structure. We denote this space by $\Cyclic (X)$, as it can be easily identified with the set of finite cyclic orders on the elements of $X$.

A \emph{Borel graph} $\cal G$ on a standard Borel space $X$ is a symmetric Borel subset of $X \times X$. We assume that $\cal G$ has countable connected components. We let $\proj (\cal G)$ denote the Borel subset of $X$ consisting of those vertices with an incident $\cal G$-edge. Indeed such set is Borel by Lusin-Souslin's Theorem. The set $X$ is referred to as the \emph{vertex space} of $\cal G$. The $\cal G$-connected component of a vertex $x\in X$ is denoted by $\cal G_x$. An \emph{extension} of a Borel graph $\cal G$ is a Borel graph homomorphism $f\colon\cal H \rightarrow \cal G$, where $\cal H$ is a Borel graph, such that $f$ restricted to each component of $\cal H$ is a graph isomorphism onto its image. We also say that such $\cal H$ is an extension of $\cal G$.

  A Borel graph $\cal G$ induces a countable Borel equivalence relation $\Rel (\cal G)$ on its vertex space $X$ defined by letting $x \sim y$, for $x,y \in X$, if and only if $x$ and $y$ are in the same component of $\cal G$. We say that $\Rel (\cal G)$ is \emph{Borel treeable} if there exists an acyclic Borel graph $\cal T$ on $X$ such that $ \Rel (\cal G) = \Rel (\cal T)$. If $\mu$ is a Borel probability measure on $X$, we say that $\Rel (\cal G)$ is $\mu$\emph{-treeable} if there exists a Borel set $X_0 \subset X$ with $\mu (X_0) = 1$ such that the restriction $\Rel (\cal G)|_{X_0}$ is Borel treeable. Finally, $\Rel (\cal G)$ is \emph{measure treeable} if it is $\mu$-treeable for every Borel probability $\mu$ on $X$.
  
\begin{theorem}[Gaboriau \cite{Gab}]\label{tma gabo}
Let $\cal G$ be a Borel graph on a standard Borel space $X$ and let $A \subset X$ be a Borel set intersecting every connected component. Then $\Rel(\cal G)$ is treeable if and only if $\Rel (\cal G)|_A$ is treeable.
\end{theorem}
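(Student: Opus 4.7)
The plan is to prove both implications by explicit constructions, the forward direction by contracting paths through $X \setminus A$ and the reverse by attaching the vertices of $X \setminus A$ to $A$ via shortest-path edges.

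For the forward direction, suppose $\cal T$ is a Borel acyclic graph with $\Rel(\cal T) = \Rel(\cal G)$. Define a graph $\cal T'$ on $A$ by declaring $(x,y) \in \cal T'$ iff $x \neq y$, both lie in $A$, and the (unique, by acyclicity of $\cal T$) $\cal T$-path between them has all internal vertices outside $A$. I would verify Borelness by writing $\cal T' = \bigcup_{n \ge 1} \cal T'_n$, where $\cal T'_n$ is the image, under the projection $(v_0, \dots, v_n) \mapsto (v_0, v_n)$, of the Borel set of $\cal T$-paths of length $n$ with endpoints in $A$ and intermediate vertices outside $A$. Acyclicity of $\cal T$ makes this projection injective, so Lusin--Souslin yields Borelness of each $\cal T'_n$. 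Acyclicity of $\cal T'$ follows because any cycle in $\cal T'$ concatenates, via unique $\cal T$-paths, to a closed walk in $\cal T$ containing a cycle, contradicting acyclicity. Finally $\Rel(\cal T') = \Rel(\cal G)|_A$ because two vertices of $A$ lying in the same $\cal G$-component are connected by a $\cal T$-path, which can be chopped at its encounters with $A$.

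For the reverse direction, suppose $\cal T_A$ is a Borel treeing of $\Rel(\cal G)|_A$. Since $A$ meets every $\cal G$-component, $d(x) := d_{\cal G}(x, A)$ defines a Borel function $X \to \NN$. Fixing a Borel linear order on $X$, I would select for every $x \notin A$ the least neighbour $\pi(x) \in N_{\cal G}(x)$ with $d(\pi(x)) = d(x) - 1$, giving a Borel function $\pi$ (this is just Lusin--Novikov applied to a countable-to-one Borel relation). Let $\cal F$ be the symmetric closure of $\{(x,\pi(x)) : x \notin A\}$ and set $\cal T := \cal T_A \cup \cal F$. Iterating $\pi$ from any $x \notin A$ reaches $A$ in $d(x)$ steps, so $\Rel(\cal T) = \Rel(\cal G)$. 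Acyclicity is verified by contradiction: a hypothetical cycle in $\cal T$ has a vertex $v$ of maximal $d$-value; if $d(v) > 0$, both of $v$'s cycle-neighbours must be $\cal F$-edges leading to vertices of smaller $d$, but $\pi(v)$ is unique, and if $d(v) = 0$, the entire cycle lies in $A$ and violates acyclicity of $\cal T_A$.

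The main obstacle is purely notational rather than conceptual: the Borelness of $\cal T'$ in the forward direction genuinely uses the uniqueness of paths in an acyclic graph, via Lusin--Souslin, and I would flag this as the one step where the acyclic hypothesis on $\cal T$ is essential (not merely that it generates the right equivalence relation). The $\mu$-treeable and measure-treeable versions follow by applying the above constructions on a conull Borel invariant set where $\cal T_A$ (respectively $\cal T$) is defined, so no additional arguments are needed for the measured statements.
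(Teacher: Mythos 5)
The paper does not actually prove this statement; it is cited to Gaboriau without argument, so there is no internal proof to compare against. Evaluating your proposal on its own merits: the reverse direction (from a treeing of $\Rel(\cal G)|_A$ to a treeing of $\Rel(\cal G)$ via the shortest-path retraction $\pi$) is correct, and the max-$d$ vertex argument for acyclicity is clean and complete.

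The forward direction, however, has a genuine gap, and it is not merely a missing detail: the graph $\cal T'$ you construct need not be acyclic. The step ``any cycle in $\cal T'$ concatenates, via unique $\cal T$-paths, to a closed walk in $\cal T$ containing a cycle'' is false, because a closed walk in a forest can reduce to a point by backtracking and need not contain any cycle. Concretely, take a $\cal T$-component isomorphic to the star $K_{1,3}$ with centre $m$ and leaves $x_0,x_1,x_2$, and suppose $A$ meets this component exactly in $\{x_0,x_1,x_2\}$. Each pair $x_i, x_j$ is joined by the $\cal T$-path $x_i\text{--}m\text{--}x_j$, whose only internal vertex $m$ lies outside $A$, so by your definition $(x_i,x_j)\in\cal T'$ for all $i\neq j$. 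Thus $\cal T'$ restricted to this component is a triangle, not a tree, while the concatenated closed walk $x_0\,m\,x_1\,m\,x_2\,m\,x_0$ in $\cal T$ backtracks to nothing and contains no cycle. The same phenomenon occurs whenever a vertex of $X\setminus A$ has three or more $\cal T$-branches each containing points of $A$. A correct forward argument must handle these ``branch points'' explicitly, for instance by first passing to the minimal $\cal T$-subtree spanning $A$ in each component and then Borel-selecting, for each branch vertex in $X\setminus A$, one incident branch into which to absorb it, iterating until the branch vertex merges with an $A$-vertex; making this selection Borel and proving termination is precisely the content that your proposal elides.
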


A countable Borel equivalence relation $R$ is \emph{hyperfinite} if there exists an increasing sequence of finite Borel subequivalence relations $(R_n)_n$ such that $R = \bigcup_{n\in \NN} R_n$. More generally, a p.m.p.~countable Borel equivalence relation $(R,\mu)$ is \emph{approximable} if there exists an increasing sequence of Borel subequivalence relations $(R_n)_{n\in \NN}$ such that $R = \bigcup_{n\in \NN} R_n$ and, for any Borel subset $A$ of the vertex space, we have that $R_n|_A = R$ for some $n$ implies $\mu (A) = 0$. 

A \emph{p.m.p.~extension} of p.m.p.~countable Borel equivalence relation $(R,\mu)$ is another p.m.p.~countable Borel equivalence relation $(Q,\nu)$ together with a Borel map $f\colon Y \rightarrow X$, where $Y$ and $X$ are the vertex spaces of $Q$ and $R$ respectively, such that $f_* \nu = \mu$ and $y \sim_Q y'$ implies $f(y) \sim_R f(y')$ for almost every $y,y'\in Y$.

 A p.m.p.~countable Borel equivalence relation $(R,\mu)$ has \emph{measured property (T)} if and only if no extension of $(R,\mu)$ is approximable. This definition is equivalent to that introduced by Moore \cite{Moo} and Zimmer \cite{Zim}. This follows from soon available work of the author with \L{}ukasz Grabowski and Samuel Mellick. The fact that p.m.p.~countable Borel equivalence relations are not approximable is proven in Pichot's thesis \cite{pichot}.

\subsection{Aperiodic Borel forests}

The aim of this subsection is to prove the following proposition, which will be helpful in Section 4.

\begin{prop}\label{prop:forex}
Let $\cal G$ be a locally finite Borel graph. Then there exists an aperiodic Borel subforest $\cal F \subset \cal G$ intersecting every $\cal G$-component.
\end{prop}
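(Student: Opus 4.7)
The plan is to construct a Borel function $\phi\colon X\to X$ with $(x,\phi(x))\in \cal G$ for every $x$ and with no periodic points, and then to take $\cal F$ to be the undirected symmetrization $\{\{x,\phi(x)\}:x\in X\}$. I first restrict, without loss of generality, to the Borel $\cal G$-invariant subset $X_\infty\subset X$ of vertices lying in infinite $\cal G$-components; on this set the equivalence relation $R:=\Rel(\cal G)$ is aperiodic, and no aperiodic subforest can live in a finite $\cal G$-component in any case.

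Granting such a $\phi$, aperiodicity of $\cal F$ is immediate, since every $\phi$-orbit $x,\phi(x),\phi^2(x),\dots$ is infinite and is contained in a single $\cal F$-component. Acyclicity is a functional-graph consequence: if $a_1,\ldots,a_k,a_1$ were an undirected cycle in $\cal F$, then each edge $\{a_i,a_{i+1}\}$ would be realized as $\phi(a_i)=a_{i+1}$ or $\phi(a_{i+1})=a_i$, and since each vertex has exactly one $\phi$-image, a count of $\phi$-out-edges along the cycle forces the orientations to coincide into a directed $\phi$-cycle, contradicting the absence of periodic points.

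To build $\phi$, I would apply the Slaman--Steel marker lemma to $R$ to obtain a vanishing sequence of Borel complete sections $X_\infty=A_0\supset A_1\supset\cdots$ with $\bigcap_n A_n=\emptyset$, and define the Borel function $n(x):=\max\{k:x\in A_k\}$, which is unbounded on every $R$-class. For each $x$, the quantity $d(x):=d_{\cal G}(x,A_{n(x)+1})$ is a positive integer since $x\notin A_{n(x)+1}$, and by Lusin--Novikov uniformization one may choose Borel-ly a $\cal G$-neighbour $\phi(x)$ of $x$ satisfying $d_{\cal G}(\phi(x),A_{n(x)+1})=d(x)-1$, i.e., lying on a shortest $\cal G$-path from $x$ to the next marker level.

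The main obstacle --- and the technical core of the proof --- is to verify that $\phi$ has no periodic orbits. This is subtle because $n$ need not be monotone along $\phi$-orbits: the first step of a geodesic from $x$ to $A_{n(x)+1}$ may land at a vertex of strictly lower $n$-value, at which point the target marker is reset to a lower-index, larger set. The key claim is that the running supremum $\sup_{i\le j} n(\phi^i(x))$ tends to $\infty$ with $j$; the argument is that whenever this running supremum fails to strictly increase at some step, the $\cal G$-distance to the corresponding target marker strictly decreases, and since these distances are positive integers the supremum must eventually grow, precluding periodicity. Making this bookkeeping fully rigorous --- for instance by first refining the markers so that $n$ is $1$-Lipschitz with respect to $\cal G$, or by using an auxiliary Borel linear order on $X$ as a tiebreaker in the selection of $\phi$ --- is the step I expect to require the most care.
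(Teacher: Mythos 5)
Your approach is genuinely different from the paper's. The paper builds the aperiodic forest $\cal F$ directly, in stages: a Feldman--Moore decomposition produces a Borel subforest whose components all have at least two vertices (Lemma~\ref{lem:forex1}); then, by contracting the current forest's components and reapplying the same lemma to the quotient graph, the minimum component size is doubled at each stage (Lemma~\ref{lem:forex2}); the union of this increasing sequence of forests gives $\cal F$. You instead try to define a single Borel successor map $\phi$ with $(x,\phi(x))\in\cal G$ and no periodic points by sending each $x$ one step closer to the next vanishing marker level $A_{n(x)+1}$. Your acyclicity-from-aperiodicity argument for the undirected symmetrization of a functional digraph is sound, so the whole weight of the proposal rests on showing that $\phi$ has no periodic orbits.

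That claim is where the proposal breaks down, and I do not think it is merely a matter of bookkeeping or tiebreaking. Consider adjacent vertices $x,y$ with $n(x)=a$ and $n(y)=a+1$. Then $y\in A_{a+1}$ while $x\notin A_{a+1}$, so $d_{\cal G}(x,A_{a+1})=1$ and $y$ is a legitimate --- possibly the unique --- choice for $\phi(x)$. Meanwhile $y$ targets $A_{a+2}$, and nothing prevents the unique $\cal G$-geodesic from $y$ to $A_{a+2}$ from passing through $x$, in which case $\phi(y)=x$ is forced and $\{x,y\}$ is a $\phi$-cycle of period two. Your heuristic --- that when the running supremum of $n$ fails to increase, the distance to the current target marker strictly decreases --- does not apply, because the target marker is reset at each step: $x$ heads for $A_{a+1}$ but $y$ heads for $A_{a+2}$, so the quantity you want to see decrease changes meaning rather than decrementing. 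Neither suggested remedy removes this obstruction. Making $n$ $1$-Lipschitz (itself a nontrivial modification of the marker sequence that would need justification) still permits adjacent vertices with $n$-values differing by exactly one, which is precisely the configuration above; and a Borel linear-order tiebreaker only acts when there is a tie, whereas here each of $x$ and $y$ can have a unique admissible image. Some genuinely new ingredient --- e.g.\ a committed target level inherited along $\phi$-trajectories, or abandoning the functional-graph strategy in favor of the iterative component-doubling scheme the paper uses --- is needed to close the gap.

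Two smaller remarks. First, you should note that after discarding finite $\cal G$-components the conclusion ``$\cal F$ meets every $\cal G$-component'' should be read as ``every infinite $\cal G$-component,'' since an aperiodic subforest cannot live inside a finite component; the paper's own proof carries the same implicit restriction. Second, even if $\phi$ were repaired, you would still want to say why the resulting Borel set of unordered pairs is a Borel subset of $\cal G$ and why every infinite component is hit --- both are easy, but they are part of the statement.
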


The proof of the proposition relies on two lemmas.

\begin{lemma}\label{lem:forex1}
Let $\cal G$ be a locally finite Borel graph. Then there exists $\cal T \subset \cal G$ Borel acyclic graph such that all components of $\cal T$ contain at least 2 vertices.
\end{lemma}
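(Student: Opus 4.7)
The plan is to obtain $\cal T$ as a Borel maximal matching of $\cal G$ augmented with auxiliary ``leaf'' edges that attach each still-uncovered vertex to its match neighbour; this will automatically be acyclic and cover $\proj(\cal G)$. Concretely, since $\cal G$ is locally finite, its line graph is also locally finite, so by a standard application of the Kechris--Solecki--Todorcevic theorem there is a Borel proper edge colouring $c\colon\cal G\to\NN$. I would then construct a maximal Borel matching $\cal M\subset\cal G$ greedily by colour: an edge $e$ with $c(e)=n$ is placed in $\cal M$ precisely when no earlier edge already in $\cal M$ (of colour $<n$) shares a vertex with $e$. Properness of $c$ ensures that edges of the same colour are pairwise vertex-disjoint, so this recursion stays Borel at every stage; $\cal M$ is a matching by construction, and maximality holds because any $e\notin\cal M$ is blocked at stage $c(e)$ by some adjacent $\cal M$-edge.

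Next, for each $v\in\proj(\cal G)\setminus\proj(\cal M)$, maximality forces every $\cal G$-neighbour of $v$ to lie in $\proj(\cal M)$, so in particular $v$ has at least one $\cal G$-neighbour; Luzin--Novikov then provides a Borel selector $f\colon\proj(\cal G)\setminus\proj(\cal M)\to X$ with $(v,f(v))\in\cal G$. I then set
\[
\cal T \;:=\; \cal M\;\cup\;\bigl\{(v,f(v)),(f(v),v) \;:\; v\in\proj(\cal G)\setminus\proj(\cal M)\bigr\},
\]
which is Borel and satisfies $\proj(\cal T)=\proj(\cal G)$. To verify acyclicity, I would observe that each component of $\cal T$ decomposes as a unique matched pair $(w,w')\in\cal M$ together with the ``leaves'' $v$ having $f(v)\in\{w,w'\}$. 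Each such leaf has $\cal T$-degree exactly one, so the component is a tree on at least two vertices; distinct matched pairs give disjoint components because $f$ is a function.

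The only genuine subtlety is the input Borel edge colouring: I am tacitly using that a locally finite Borel graph admits a Borel proper edge colouring by $\NN$, which is standard (via the KST-type colouring of its line graph, itself locally finite). Everything else---the greedy matching, the leaf augmentation, and the acyclicity check---is a direct combinatorial bookkeeping argument and should not present any real obstacle.
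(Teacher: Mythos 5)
Your proof is correct and follows essentially the same strategy as the paper: construct a maximal Borel matching of $\cal G$, then attach each uncovered vertex of $\proj(\cal G)$ to one of its (necessarily matched) neighbours by a single leaf edge, and observe that acyclicity holds because all non-matched vertices have $\cal T$-degree one. The only difference is that the paper extracts its maximal matching from the Feldman--Moore decomposition (taking $\cal G_1$ and assuming WLOG that it is maximal), whereas you build it explicitly via a KST proper edge colouring of the line graph together with a greedy stage-by-stage selection.
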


\begin{proof}
Let $\cal G = \bigcup_{n\in \NN} \cal G_n$ be defined by Feldman-Moore's Theorem, so each $\cal G_n$ is a matching. We may assume without loss of generality that for every $x \in X \backslash \proj (\cal G_1)$ we have that $N_{\cal G} (x) \subset \proj (\cal G_1)$. 

Let $\cal H$ be a Borel subset of $\cal G$ consisting of exactly one edge $h_x \in \cal G$ for each $x \in X \backslash \proj (\cal G_1)$ such that one end-vertex of $h_x$ is $x$. By maximality the other end-vertex of $h_x$ is in $\proj (\cal G_1)$. Such $\cal H$ may be constructed by letting $h_x$ be the edge incident to $x$ with lowest label in the Feldman-Moore decomposition of $\cal G$.

We claim that $\cal T := \cal G_1 \cup \cal H$ has the desired properties. Let us first show that $\cal T$ is acyclic. Observe that all vertices in $X\backslash \proj (\cal G_1)$ have degree 1 in $\cal T$. Therefore, a cycle in $\cal T$ would be contained in $\cal G_1$. But this is not possible for $\cal G_1$ being a matching. We deduce that $\cal T$ is acyclic.

Now let $x \in X$. If $x\in \proj (\cal G_1)$ then $x$ is connected by a $\cal G_1$-edge to some other vertex, and so $|[x]_{\cal T}| \geq 2$. When $x \in X \backslash \proj (\cal G_1)$, then there exists some $h_x \in \cal H$, and the conclusion follows.
\end{proof}

\begin{lemma}\label{lem:forex2}
Let $\cal G$ be a locally finite Borel graph and $\cal T \subset \cal G$ a Borel subforest with finite components of size at least $n\in \NN$. Then, there exists a Borel subforest $\cal T \subset \cal T' \subset \cal G$ with all components of size at least $2n$.
\end{lemma}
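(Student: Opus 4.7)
The plan is to pair up $\cal T$-components and, for each chosen pair, glue them together by a single Borel-selected $\cal G$-edge, producing $\cal T' \supset \cal T$ with components of size $\ge 2n$.

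First I would exploit that the equivalence relation $R$ on $X$ whose classes are the $\cal T$-components is a finite Borel equivalence relation, hence smooth, with a Borel transversal $\Xi \subset X$. On $\Xi$ I would introduce the auxiliary locally finite Borel graph $\cal H$ where $u \sim_{\cal H} v$ iff $u \ne v$ and some $\cal G$-edge joins $[u]_R$ to $[v]_R$. I would then take a Borel maximal matching $\cal M \subset \cal H$, obtained by a standard Feldman--Moore plus greedy argument: decompose $\cal H$ into countably many Borel matchings $(\cal H_n)_{n \in \NN}$ and, going color by color, include each edge of $\cal H_n$ whose endpoints are still unsaturated.

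Next, using Lusin--Novikov uniformization I would select a single $\cal G$-edge $e_{u,v}$ joining $[u]_R$ to $[v]_R$ for each $\{u,v\} \in \cal M$, and, for each $u \in \Xi$ unmatched by $\cal M$ that has some $\cal H$-neighbour (necessarily $\cal M$-saturated, by maximality of $\cal M$), one further $\cal G$-edge $f_u$ from $[u]_R$ to the $\cal T$-component of such a neighbour. Let $\cal T'$ be the symmetrized union of $\cal T$ with all such $e_{u,v}$ and $f_u$. To verify $\cal T'$ is a forest I would contract each $\cal T$-component to a point: the added edges become $\cal M$ together with pendant edges landing on $\cal M$-saturated vertices, which is acyclic. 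For size, a matched pair yields a $\cal T'$-component of size $\ge 2n$, and any unmatched-but-extendable component glues onto an already-merged pair, preserving the bound.

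The main subtlety I anticipate is ensuring that the chosen edges are mutually compatible: choosing an edge independently for every component risks creating cycles when distinct components collide on the same partner, whereas a maximal matching plus pendant attachments controls the topology of the quotient. The combination of Feldman--Moore (to decompose $\cal H$ into matchings), Lusin--Novikov (to select single Borel edges), and the maximality of $\cal M$ (to guarantee every unmatched vertex with $\cal H$-neighbours has a matched neighbour to latch onto) is exactly what should make these Borel choices fit together into a single forest doubling the component sizes.
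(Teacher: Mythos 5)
Your proof is correct and takes essentially the same route as the paper's: contract the $\cal T$-components to form a locally finite quotient graph, extract an acyclic matching-plus-pendants structure on it in a Borel way, lift the selected edges back to $\cal G$ via Lusin--Novikov, and verify acyclicity by contracting. The paper packages the matching-plus-pendants step as a separate Lemma (\ref{lem:forex1}) applied to the quotient graph $\pi\times\pi(\cal G)$ on $X/\cal T$, whereas you re-derive it directly on a Borel transversal $\Xi$, but these are cosmetic differences.
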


\begin{proof}
The Borel equivalence relation $\Rel (\cal T)$ is finite by hypothesis, so its quotient Borel space $X/\cal T$ is a standard Borel space. Let $\pi \colon X \rightarrow X/\cal T$ be the associated quotient map. Let $\cal H := \pi\times \pi (\cal G)$ be the Borel graph on $X/\cal T$ obtained as a contraction of $\cal T$ components and then identifying multiple edges. By local finiteness of $\cal G$ and finiteness of $\cal T$ components we have that $\cal H$ is locally finite. 

Let $\cal Q \subset \cal H$ be obtained by Lemma \ref{lem:forex1}, and let $\cal T'$ be the union of $\cal T$ with exactly one edge $h_e \in \cal G$ such that $\pi\times \pi (h_e) = e$ for each $e \in \cal Q$. By construction of $\cal Q$, every $\cal T$-component is connected to some other $\cal T$-component by some $\cal T'$-edge, so the conclusion on $\cal T'$-component sizes follows. Acyclicity follows from the fact that $\cal Q$ is a deformation retract of $\cal T'$.
\end{proof}

\begin{proof}[Proof of Proposition \ref{prop:forex}]

Let $\cal T_1\subset \cal G$ be defined by Lemma \ref{lem:forex1}. Given $n\in \NN$, let $X_n \subset X$ be the Borel $\cal G$-invariant subset obtained as the union of those $\cal G$-components where every $\cal T_n$-component is finite. Observe that we have already found a solution to our problem in $X \backslash X_n$. Then, we let $\cal T_{n+1}$ be obtained by Lemma \ref{lem:forex2} for $\cal T_n |_{X_n}$ and $\cal G|_{X_n}$. Let $\cal F := \bigcup_{n\in \NN} \cal T_n$.

We have that  $X_n \supset X_{n+1}$ for every $n\in \NN$. If $\bigcap_{n\in \NN} X_n = \emptyset$, then every $x \in X$ is in some $X_m$ and so $\cal T_m$ already has an infinite component in $[x]_{\Rel (\cal G)}$. Otherwise, for every $x\in \bigcap_{n\in \NN} X_n$ we have that $[x]_{\cal T_n} \geq 2n$ for every $n\in \NN$, so $|[x]_{\cal F}| = \infty$.

\end{proof}

\section{Tree decompositions in Borel graphs}
\label{sec borel tree}

Theorem \ref{tma:maintech} is the main theorem of this section. There, roughly speaking, we reduce the question of treeability of $\Rel (\cal G)$, for a given Borel graph $\cal G$, to treeability of an auxiliar equivalence relation $Q$, called the \emph{levels equivalence relation}. Such equivalence relation $Q$ is defined on an auxiliar standard Borel space by two vertices being in the same part of a previously introduced tree decomposition. Most of this chapter is devoted to the construction of such tree decompositions in a suitable way for the Borel setting, adapting the tools introduced in the preliminaries. The rest of the paper is devoted to applications of Theorem \ref{tma:maintech}. We point out that there is a very close relation between Theorem \ref{tma:maintech} and results in \cite{Tse}. This is discussed in Remark \ref{rem:tse} at the end of the section.

Let $\cal G$ be a locally finite Borel graph on a standard Borel space $X$. We let $\Sep (\cal G) \subset [X]^{< \infty} \times [\cal G]^{< \infty}$ denote the Borel subset of those $(S,B)\in [X]^{< \infty} \times [\cal G]^{< \infty}$ such that all vertices in $S$ belong to the same $\cal G$-component and $(S,B)$ is a separation in such component. A \emph{Borel separation system} is a Borel subset $\cal S \subset \Sep (\cal G)$ such that for every component $G$ of $\cal G$ the family $\cal S_G:= \{(S,B)\in \cal S : S\subset V(G)\}$ is a separation system.

If $(S,B) \in \cal S$, then $\Sides(S,B)$ denotes the ordered pair of sides of $(S,B)$ in its connected component $G_S$, and $\Sides (\cal S) := \{\Sides (S,B) \colon (S,B)\in \cal S\}$. We say that $\cal S$ is \emph{nested} if for every connected component $G$ of $\cal G$, the family $\cal S_G$ is nested. Similarly, $\cal S$ has \emph{property} ($\ast$) if, for every connected component $G$ of $\cal G$, the separation system $\cal S_G$ has property ($\ast$). We define the pair $(\cal T_{\cal S},\cal V_{\cal S})$ obtained from a nested Borel separation system $\cal S$ by $\cal T_{\cal S} = \bigcup \cal T_{\cal S_G}$ and $ \cal V_{\cal S} = \bigcup \cal V_{\cal S_G}$ where $G$ runs over the connected components of $\cal G$ and $(\cal T_{\cal S_G},\cal V_{\cal S_G})$ is defined as in the preamble of Proposition \ref{prop:CHM}.

\begin{prop}\label{prop:Z}
Let $\cal G$ be a locally finite Borel graph on a standard Borel space $X$ and $\cal S$ a nested Borel separation system. Then, the set $Z = \{(x,t)\in X\times V(\cal T_{\cal S}) : x\in V_t\}$ admits a standard Borel space structure such that the map $p\colon (x,t) \in Z \mapsto x\in X$ is Borel and countable-to-one.
\end{prop}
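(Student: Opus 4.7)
The plan is to realize $Z$ as a Borel transversal for a countable Borel equivalence relation on a Borel subset of $X \times \cal S$, thereby avoiding a direct construction of the quotient $\cal S / \sim_{\cal S}$. I begin by defining
$$
Z' := \{(x, (S,B)) \in X \times \cal S : x \in V_{[(S,B)]_{\sim_{\cal S}}}\},
$$
together with the equivalence relation $\sim$ on $Z'$ given by $(x, (S,B)) \sim (x, (S',B'))$ iff $(S,B) \sim_{\cal S} (S',B')$. The $\sim$-classes are in natural bijection with the elements of $Z$, so it suffices to identify $Z$ with a Borel transversal for $\sim$ inside $Z'$.

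The first main step is checking that $Z'$ is Borel. The relation $\sim_{\cal S}$ on $\cal S$ is itself Borel: the strict order $<$ is Borel, and the ``no intermediate'' clause in the predecessor condition is the negation of an existential quantifier over $\cal S$ whose witness fibers are finite by property $(\ast)$, hence Borel by Luzin-Souslin. The $\sim_{\cal S}$-classes are countable because each sits inside $\cal S_{G_S}$, a countable set of separations. Feldman-Moore therefore supplies Borel involutions $(g_n)_{n\in \NN}$ of $\cal S$ with $\sim_{\cal S} = \bigcup_n \graph(g_n)$; writing $A_1(S,B)$ for the first component of $\Sides(S,B)$, the defining condition of $Z'$ becomes the countable Borel conjunction ``$x \in V(A_1(g_n(S,B)))$ for every $n$''.

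Next, the projection $p'\colon Z' \to X$ is Borel and countable-to-one, since the fiber over $x$ lies in $\{x\} \times \cal S_{G_x}$. By Lusin-Novikov I partition $Z' = \bigsqcup_{n \in \NN} Z'_n$ into Borel pieces on which $p'$ is injective. For each $(x,(S,B)) \in Z'$ I set $n_0(x,(S,B)) := \min\{n : [(x,(S,B))]_\sim \cap Z'_n \neq \emptyset\}$, a well-defined Borel function, and let $f(x,(S,B))$ be the unique element of $[(x,(S,B))]_\sim \cap Z'_{n_0}$ (uniqueness by injectivity of $p'|_{Z'_{n_0}}$, as every element of the class has $p'$-image $x$). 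Luzin-Souslin then gives that the image $T := f(Z')$ is a Borel transversal for $\sim$.

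Finally, I equip $Z$ with the standard Borel structure pulled back from $T$ under the bijection $(x, [(S,B)]_{\sim_{\cal S}}) \leftrightarrow f(x, (S,B))$. With this identification, $p\colon Z \to X$ is the restriction of $p'$ to $T$, hence Borel; and it is countable-to-one because its fibers correspond to the $\sim$-classes inside $(p')^{-1}(x)$, which biject with the set of vertices $t \in V(\cal T_{\cal S_{G_x}})$ containing $x$---a subset of a countable vertex set. The main obstacle I anticipate is the Borelness of $Z'$: the natural defining condition involves a universal quantifier over the $\sim_{\cal S}$-class of $(S,B)$, which is $\Pi^1_1$ at face value and must be tamed using the countability of $\sim_{\cal S}$-classes via Feldman-Moore.
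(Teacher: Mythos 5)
Your proof is correct but takes a genuinely different parameterization from the paper's. The paper starts from the set $Y = \{(x,(S,B)) : x \in S\}$, i.e.\ pairs where $x$ lies in the \emph{adhesion set} of $(S,B)$; the membership test $x \in S$ is then a trivially Borel, local condition, and Borelness of the equivalence relation $R$ reduces to Borelness of the predecessor relation on $\cal S$. The price the paper pays is a separate case for the Borel set $U$ of vertices that appear in some part $V_t$ but in no adhesion set; the quotient $Y/R$ together with $U$ is then put in bijection with $Z$. You instead take $Z' = \{(x,(S,B)) : x \in V_{[(S,B)]_{\sim_{\cal S}}}\}$, parameterizing directly by ``$x$ belongs to the part indexed by the class of $(S,B)$''. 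This is more uniform (no $U$ case, and the bijection $Z'/{\sim} \leftrightarrow Z$ is immediate) but moves the difficulty into showing $Z'$ is Borel, since the defining condition quantifies universally over the $\sim_{\cal S}$-class of $(S,B)$. Your Feldman--Moore handling of that quantifier is correct, and your Lusin--Novikov construction of the transversal $T$ is a standard and clean alternative to the paper's selection of representatives $f_i(A_i)$. The approaches buy roughly the same thing with a tradeoff in where the bookkeeping sits.

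One small blemish: you invoke property $(\ast)$ to argue that the witness fibers for the intermediate-separation quantifier are finite. The proposition as stated does not assume property $(\ast)$, so this would technically prove a weaker statement. However, the argument goes through without it: for fixed $(S,B),(S',B')$ in the same component $G$, all candidate witnesses $(S'',B'')$ lie in $\cal S_G$, which is countable (adhesion sets are finite subsets of a countable vertex set). Countability of the fibers already suffices for the Lusin--Souslin (countable-to-one image) argument you appeal to, or equivalently one can enumerate $\cal S_G$ by Lusin--Novikov and write the existential as a countable union. Replacing ``finite by property $(\ast)$'' with ``countable since $\cal S_G$ is countable'' removes the unnecessary hypothesis and aligns with the statement as given. (One should note in passing that without property $(\ast)$ the construction $(\cal T_{\cal S},\cal V_{\cal S})$ need not give a genuine tree decomposition, but the sets $Z$, $Z'$ are still defined; this is a shared feature of your proof and the paper's, not a defect specific to yours.)
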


\begin{proof}
First, we endow $Z$ with its standard Borel space structure and then prove (a) and (b). For each $x \in X$, let $\cal S_x \subset \cal S$ denote the set of pairs $(S,B) \in  \cal S$ with $x\in S$. We consider the Borel subset  \begin{equation*}
Y \colon= \{(x,(S,B))\in X \times \cal S \colon (S,B) \in \cal S_x\} \cup U \subset X\times \cal S
\end{equation*}
and define an equivalence relation $R$ on $Y$ by letting $(x,(S,B)) \sim_R (y,(S',B'))$ if and only if $x= y$ and $(S,B) \sim_{\cal S_{\cal G_x}} (S',B')$ or $(S,B) = (S',B')$. The equivalence relation $\sim_{\cal S_{\cal G_x}}$ is defined as in the preamble of Proposition \ref{prop:CHM}.

\begin{claim}\label{prop:suave}
$R$ is a smooth Borel equivalence relation.
\end{claim}

\begin{proof}[Proof of claim]
$R$ is Borel since the order $\leq$ on separations determining the equivalence relations $\sim_{\cal S_{\cal G_x}}$ is determined in finite neighbourhoods of $\cal G$. Indeed, let $(A_1,A_2)$ and $(A_1',A_2')$ be the sides of two separations $(S,B)$ and $(S',B')$ respectively. In order to check that $A_1 \subset A_1'$ it suffices to show that both $S$ and $B$ are contained in $A_1'$, and such containment is witnessed in a $\cal G$-neighbourhood of finite radius around $S'$. A similar argument holds for the inequality $A_2' \subset A_2$ required to complete the requirements for $(S,B) \leq (S',B')$.

The map $q\colon(x,(S,B))\in Y \mapsto x \in X$ is Borel and countable-to-one, since $x\in S$, the set $S$ is finite, and components of $\cal G$ are countable. By a standard application of Lusin-Novikov's Theorem \cite[Exercise 18.15]{kech}), there exist partial Borel functions $f_i \colon X_i \subset X \rightarrow Y$ with $i \in \NN$ such that $q^{-1} (x) = \{f_i (x) \colon i \in \NN, x \in X_i\}$ and $|q^{-1} (x)|= |\{i \in \NN \colon x \in X_i\}|$. For each $i \in \NN$, let $A_i \subset X_i$ be the Borel set of those $x \in X_i$ such that $f_i (x) \not\sim_R f_j (x)$ for any $j< i$. The set $\bigcup_{i\in \NN} f_i (A_i) \subset Y$ is Borel by Lusin-Souslin's Theorem and a transversal of $R$ by construction.
\end{proof}

Let $U \subset X$ be the Borel subset of those $x\in X$ for which there exists no $(S,B)\in \cal S$ with $x\in S$ but there exists $t\in V(\cal T_{\cal S})$ such that $x\in V_t$. Equivalently, there exists $(S,B) \in \cal S$ such that $x$ lies in the part of $\cal T_{\cal S_{\cal G_x}}$ defined by $(S,B)$, so there is no other $(S',B')\in \cal S$ separating $x$ from $(S,B)$. Arguing similarly as in the claim it follows that $U$ is indeed Borel, as the latter is a Borel condition. We observe, merely as a warning, that even though $q (Y) \cup U$ will intersect every $\cal G$-component, it may be a strict Borel subset of $X$ in cases like that of Example \ref{ex:notree}.

It follows from the claim and Proposition 6.3 in \cite{KM} that $Z' \colon= (Y/R) \cup U$ is a standard Borel space. The standard Borel space structure on $Z$ is induced by a bijection between $Z$ and $Z'$. This bijection is established by associating to each equivalence class $[x, (S,B)]_R \in Z'$ the pair $(x,t) \in Z$ where $t = [(S,B)]_{\cal S_{\cal G_x}}$ when $x\not\in U$, or, in case $x \in U$, the only such $t$ for which $x\in V_t$. 

To prove (a), the fact that $p$ is a Borel map follows from the map $q$ above being Borel as well as the quotient map $Y \rightarrow Y/R$. Since the map $q$ in the claim is countable-to-one, and the pre-image of a vertex $x \in X$ by $p$ consists of $R$-equivalence classes of $q^{-1} (x)$, then $p^{-1}(x)$ is countable.

\end{proof}
 
 We let $\tilde R$ be the countable Borel equivalence relation on $Z$ defined by $(x,t) \sim_{\tilde R} (y,t')$ if and only if $x \sim_{\Rel (\cal G)} y$ and $Q$ be the countable Borel equivalence relation on $Z$ defined by $(x,t) \sim_Q (y,t')$ if and only if $t = t'$. We will refer to $Q$ as the \emph{levels equivalence relation (associated to $\cal S$)}. We refer to an equivalence class of $Q$ as a \emph{level}
 
Theorem \ref{tma:maintech} below will allow us to deduce treeability of $R$ in terms of treeability of $Q$. In order to apply Theorem \ref{tma:maintech} in the following sections, we will need to graph $Q$. However, this is not needed to prove Theorem \ref{tma:maintech}, so for now we will restrict this discussion to an illustrative example.

\begin{example}[Tutte decomposition]\label{ex:Tuttemes}
Let $\cal G$ be a Borel graph with 2-connected components on the standard Borel space $X$. Let $\Sep_2 (\cal G) \subset \Sep (\cal G)$ be the Borel set of separations $(S,B)$ such that $|S| = 2$. In the proof of Claim \ref{prop:suave}, we remarked that the order $\leq$ on separations is determined locally. Hence, the subset $\cal S \subset \Sep_2 (\cal G)$ consisting of those separations $(S,B) \in \Sep_2 (\cal G)$ which are nested with every other separation of $\Sep_2 (\cal G)$ in their component is Borel. By Proposition \ref{prop:estar}, the separation system $\cal S'$ induced by $\cal S$ is a Borel separation system with property ($\ast$).

One graph we can consider on $Z$ is the Borel graph $\cal L$ given by $(x,t)\sim (y,t')$ if and only if $x\sim_{\cal G} y$ and $t = t'$. This Borel graph satisfies that $\Rel (\cal L) \subset Q$, but the inequality could be strict. Components of $\cal L$ are subgraphs of $\cal G$. Additionally, consider $\cal A$ be the set of edges on $Z$ defined by $(x,t)\sim (y,t')$ if and only if $t = t'$ and there exists $t''\in V(\cal T_{\cal S})$ such that $x,y\in V_{t''}$. The latter condition is equivalent to the pairs $(x,t'')$ and $(y,t'')$ defining elements of $Z$. Then, the components of the Borel graph $\cal L\cup \cal A$ are the torsos of the Tutte decompositions of $\cal G$-components, so we will have that $\Rel (\cal L\cup \cal A) = Q$ and that $\cal L\cup \cal A$-components are either 3-connected or cycles.
\end{example}

The following is the main theorem of this section.



\begin{theorem}\label{tma:maintech}
Let $\cal G$ be a locally finite Borel graph on a standard Borel space $X$, let $\cal S$ be a nested Borel separation system with property ($\ast$), and let $\mu$ be a Borel probability measure on $X$. If $Q$ is Borel treeable (resp.~measure-treeable), then $\Rel (\cal G)$ is Borel treeable (resp.~measure-treeable).
\end{theorem}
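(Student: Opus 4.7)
I would first construct a Borel treeing of the equivalence relation $\tilde R$ on the auxiliary space $Z$, and then descend to a treeing of $\Rel(\cal G)$ on $X$ via a Borel section of the countable-to-one projection $p\colon Z \to X$. The key observation is that a treeing of $Q$ gives, level by level, a spanning tree on each part $V_t$, while the tree $\cal T_{\cal S}$ prescribes how to glue these spanning trees along adhesion sets to produce a spanning tree of each $\tilde R$-class.

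\textbf{Constructing the treeing on $Z$.} Let $\cal U$ be a Borel treeing of $Q$; since the $Q$-classes are exactly the levels, $\cal U$ restricts to a spanning tree on each $V_t$. To bridge levels in a Borel fashion, I would fix a Borel linear order $\leq_X$ on $X$ and, for each tree edge of $\cal T_{\cal S}$ arising from a pair $\{(S,B),(S,B)^-\} \in \cal S$, take as gateway the vertex $g := \min_{\leq_X} S$; here $S$ is non-empty because the separations in $\cal S$ are proper and components are connected. Define a Borel graph $\cal W$ on $Z$ by taking $\cal U$ together with, for each such edge of $\cal T_{\cal S}$ joining levels $t$ and $t'$, the gateway edge $\{(g,t),(g,t')\}$. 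By construction $\cal W \subset \tilde R$, and any two points in the same $\tilde R$-class can be joined in $\cal W$ by alternating within-level $\cal U$-paths (to reach a gateway) with gateway edges (to hop to the neighbouring level along the path in $\cal T_{\cal S}$). The main obstacle is verifying acyclicity: a hypothetical cycle in $\cal W$ would project under $(x,t)\mapsto t$ to a closed walk in the forest $\cal T_{\cal S}$, which must traverse each tree edge an even number of times. But each tree edge corresponds to exactly one gateway edge of $\cal W$, and a simple cycle uses each edge at most once, so the projection must be constant, placing the cycle inside a single level and contradicting the acyclicity of $\cal U$.

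\textbf{Descent and the measure case.} By Lusin-Novikov, $p$ admits a Borel section $s\colon X \to Z$; by Lusin-Souslin, its image $s(X)$ is Borel, and it is a complete section for $\tilde R$ since every $\tilde R$-class projects onto an $\Rel(\cal G)$-class. Moreover $s$ is a Borel isomorphism from $(X,\Rel(\cal G))$ onto $(s(X),\tilde R|_{s(X)})$, so by Gaboriau's theorem (Theorem \ref{tma gabo}) the Borel treeability of $\tilde R$ provided by $\cal W$ transfers first to $\tilde R|_{s(X)}$ and then through $s$ to $\Rel(\cal G)$. The measure-theoretic case runs along exactly the same lines, using the auxiliary measure $\mu_Z$ on $Z$ constructed from $\mu$ in Section 3 and the measured version of Gaboriau's theorem at the descent step.
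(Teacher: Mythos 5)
Your proposal is correct and follows essentially the same route as the paper: build the tree on $Z$ by adjoining one ``gateway'' edge per edge of $\cal T_{\cal S}$ to a treeing of $Q$ (your $\min_{\leq_X} S$ is just an explicit Borel transversal of the finite equivalence relation the paper calls $\cal J$), verify acyclicity by projecting to the forest $\cal T_{\cal S}$, and descend to $X$ via a Lusin--Novikov section together with Gaboriau's theorem, with the measure case running through the auxiliary measure on $Z$.
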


\begin{proof}

Let us first deal with the Borel treeability case. Let $\cal Q$ be a Borel treeing of $Q$ and $\cal W \subset Z\times Z$ the Borel set of edges defined by $(x,t) \sim_{\cal W} (x,t')$ if and only if there is a $\cal T_{\cal S}$-edge between $t$ and $t'$. The graph $\cal W$ is Borel since $(x,t) \sim_{\cal W} (x,t')$ if and only if there exists $(S,B) \in \cal S$ such that $(S,B) \in t$ and $(S,B)^- \in t'$. 

Our next goal is to drop enough $\cal W$ edges from $\cal Q \cup \cal W$ as to obtain an acyclic graph inducing $Q$. To this end, let $\cal J$ be the Borel equivalence relation on $\cal W$ defined by pairs of the form $((u,t),(u,t')) \sim_{\cal W} ((v,t),(v,t'))$. The equivalence relation $\cal J$ is finite, as for each pair $(t,t') \in E(\cal T_{\cal S})$ there exists a unique $(S,B)\in \cal S$ such that $t = [(S,B)]_{\cal S}$ and $t' = [(S,B)^-]_{\cal S}$, so if $((u,t),(u,t')) \in \cal W$, then $u\in S$. Therefore, we may let $\cal W'$ be a Borel transversal for $\cal W$.

We now show that $\cal Q \cup \cal W'$ is acyclic and that $\Rel (\cal Q \cup \cal W') = \tilde R$. In order to do this, we show that given any two $\tilde R$-related $(x,t),(y,t') \in Z$, there exists a unique path $\tilde P \subset \cal Q \cup \cal W'$ with end-vertices $(x,t)$ and $(y,t')$. By a standard induction argument and Proposition \ref{prop:CHM}, we may assume without loss of generality that $(t,t')\in E(\cal T_{\cal S})$. Then, the unique path from $t$ to $t'$ in $\cal T_{\cal S}$ is determined by a unique $(S,B)\in \cal S$ such that $t = [(S,B)]_{\cal S}$ and $t' = [(S,B)^-]_{\cal S}$. Letting $u\in S$ we have that $((u,t),(u,t')) \in \cal W$, so there exists a unique edge  of the form $((v,t),(v,t'))\in \cal W'$. Then, the path $\tilde P$ is obtained as the concatenation of the unique $\cal Q$-path from $(x,t)$ to $(v,t)$, with the edge  $((v,t),(v,t'))\in \cal W'$, and then the unique $\cal Q$-path between $(u,t')$ and $(y,t')$. This implies that $\tilde R$ is Borel treeable.

Since $p\colon(x,t) \in Z \mapsto x \in X$ is countable-to-one by Proposition \ref{prop:Z}.(a), it admits a Borel section $f$ by Lusin-Novikov's Theorem. Let $Z_0\colon= f(X)$, and note that $Z_0$ is a  Borel subset of $Z$ by Lusin-Souslin's Theorem. By Theorem \ref{tma gabo}, there exists an acyclic Borel graph $\cal T$ on $Z_0$ such that $\Rel (\cal T) = \tilde R|_{Z_0}$. Again by Lusin-Souslin's Theorem, the graph $p (\cal T)$ is Borel. By definition of $\tilde R$ we have that $\Rel (p(\cal T)) = \Rel(\cal G)|_{p(f(X))}$. Finally, since $p$ is injective on $Z_0$, the Borel graph $p(\cal T)$ is acyclic. It follows from Theorem \ref{tma gabo} that $\Rel (\cal G)$ is Borel treeable.

For the case of measure-treeability, let $\mu$ be a Borel probability measure on $X$, and let $X_n \subset X$ be the Borel subset of those $x\in X$ such that $|p^{-1} (x)| = n$, for $n\in \NN \cup \{\infty\}$. Applying the Lusin-Novikov theorem again as in the proof of the claim, we obtain, for each $n \in \NN \cup \{\infty\}$, Borel functions $f_i^{(n)}\colon X_n \rightarrow Z$ with pairwise disjoint graphs such that $p^{-1} (x) = \{f_i^{(n)} (x)\}_{i=1}^n$ for each $x\in X_n$ and $n \in \NN \cup \{\infty\}$. We let $\nu$ be any Borel probability measure in the class of $\sum_{n \in \NN\cup\{\infty\}} \sum_{i=1}^n (f_i^{(n)})_* \mu$. Such $\nu$ exists for the latter being a $\sigma$-finite Borel measure on a standard Borel space.

Upon discarding a $\nu$-null set, we may assume that $\nu$ is $\tilde R$-quasi-invariant, since $\tilde R$ is countable. Let $N\subset Z$ be a $\nu$-null Borel $\cal Q$-invariant subset such that $Q|_{Z\backslash N}$ is Borel treeable. Let $Z'$ denote the complement of the $\tilde R$-saturation of $N$. Then $Z'$ is $\tilde R$-invariant and $Q|_{Z'}$ is Borel treeable.  

Moreover, for $p$ being countable-to-one $p (Z')$ is Borel by Lusin-Souslin's Theorem. By definition of $\tilde R$, we have that $p(Z')$ is $\Rel (\cal G)$-invariant and, by construction of $\mu$, that $p(Z')$ has full $\mu$-measure. The proof is concluded running the same argument as above, substituting $Z'$ for $Z$, and letting $\cal Q$ be a Borel treeing of $Q|_{Z'}$.

\end{proof}

\begin{remark}\label{rem:tse}
The above theorem is just a slight generalization of the treeability condition  \cite[Corollary 4.2]{Tse}. Our theorem includes the cases of $\mu$-treeability and measure treeability. Also, it does not require a condition on the number of ends of $\cal G$-components. The framework of the auxiliary space $Z$ will also be helpful in the following sectionswhen we need to graph $Q$. Apart from this slight differences, the construction here presented follows the common lines of using tree decompositions  and yields similar results as in \cite{Tse}.
\end{remark}

\section{Accessibility and subgraph-closed families}

The next two sections are devoted to provide examples where Theorem \ref{tma:maintech} can be applied. There are two concrete families of Borel graphs which are proven to be treeable in the sequel. The first of them is that of uniformly locally finite Borel graphs with components of bounded tree-width, which are proven to be Borel treeable in Corollary \ref{cor:bddtw}. The second is that of Borel graphs with accessible planar components. These latter graphs are proven to be measure treeable in the next section.

The two examples of the above paragraph will follow from a more general corollary of Theorem \ref{tma:maintech}, namely Theorem \ref{tma:Faccessible}. The statement of this result involves the following generalization of accessibility to the setting of Borel graphs. Given a class $\cal F$ of locally finite connected graphs, let $\Bor (\cal F)$ denote the class of Borel graphs with components in $\cal F$. The introduction of $\Bor (\cal F)$ will just serve as a shortcut to write $\cal G \in \Bor (\cal F)$ instead of ``$\cal G$ has components in $\cal F$''.

\begin{definition}\label{def:boracc}
Let $\cal F$ be a family of locally finite connected graphs. We say that a locally finite Borel graph $\cal G$ is \emph{Borel $\cal F$-accessible} if there exists a nested Borel separation system $\cal S \subset \Sep (\cal G)$ with property $(\ast)$ and a Borel graph $\cal H $ with $\Rel (\cal H) = Q$, where $Q$ is the levels equivalence relation associated to $\cal S$, such that $\cal H \in \Bor (\cal F)$.
\end{definition}

As discussed above, the motivation to study Borel $\cal F$-accessibility comes from the following straightforward corollary of Theorem \ref{tma:maintech}. 

\begin{theorem}\label{tma:Faccessible}
Let $\cal F$ be a family of locally finite connected graphs. If every  Borel graph in $\Bor (\cal F)$ is Borel (resp.~measure) treeable, then so is any Borel $\cal F$-accessible locally finite graph. 
\end{theorem}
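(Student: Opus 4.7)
The plan is to unpack the definition of Borel $\cal F$-accessibility and then invoke Theorem \ref{tma:maintech} essentially verbatim. Given a Borel $\cal F$-accessible locally finite Borel graph $\cal G$ on a standard Borel space $X$, Definition \ref{def:boracc} immediately supplies a nested Borel separation system $\cal S \subset \Sep(\cal G)$ with property $(\ast)$, together with a Borel graph $\cal H$ on the associated auxiliary space $Z$ such that $\Rel(\cal H) = Q$ and $\cal H \in \Bor(\cal F)$, where $Q$ is the levels equivalence relation attached to $\cal S$.

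Next, I will apply the hypothesis of the theorem to $\cal H$ itself: since $\cal H$ has components in $\cal F$, it lies in $\Bor(\cal F)$, and hence is Borel (resp.\ measure) treeable by assumption. Because treeability of a Borel graph is, by definition, a property of its induced countable Borel equivalence relation, this translates without further work into Borel (resp.\ measure) treeability of $\Rel(\cal H) = Q$.

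Finally, I will feed this treeability of $Q$ into Theorem \ref{tma:maintech}, with $\cal S$ serving as the required nested Borel separation system with property $(\ast)$, to transfer treeability from $Q$ to $\Rel(\cal G)$. In the measure case, Theorem \ref{tma:maintech} needs $Q$ to be $\mu_Z$-treeable for the auxiliary measure on $Z$ associated to each Borel probability $\mu$ on $X$; this is automatic because measure treeability of $\cal H$, by definition, asserts $\nu$-treeability for \emph{every} Borel probability $\nu$ on $Z$, and $\mu_Z$ is such a measure.

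I do not foresee any real obstacle: the statement is a direct bookkeeping corollary of Theorem \ref{tma:maintech}, with all substantive work already completed in Section 3. The role of Theorem \ref{tma:Faccessible} is merely to package that strategy in the form that will be convenient for applications in the following sections, where $\cal F$ will be instantiated as, for instance, the class of one-ended planar graphs or the class of graphs with no thick ends.
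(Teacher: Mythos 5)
Your proposal is correct and matches the paper's intent exactly: the paper states this theorem as a ``straightforward corollary'' of Theorem~\ref{tma:maintech} without supplying an explicit proof, and you have filled in precisely the bookkeeping it leaves implicit. One small remark: as stated in Section~3, Theorem~\ref{tma:maintech} already reads ``if $Q$ is measure treeable then $\Rel(\cal G)$ is measure treeable,'' so the digression through $\mu_Z$ (which mirrors the introduction's phrasing) is harmless but not strictly needed.
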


Let $\cal F_1 \subset \cal F$ denote the subfamily of one-ended graphs in $\cal F$. The following theorem justifies the use of the term accessibility in the definition of Borel $\cal F$-accessibility.

\begin{theorem}\label{tma:acces}
Let $\cal F$ be a minor-closed family of locally finite connected graphs. Then, Borel graphs in $\Bor(\cal F)$ with accessible components are Borel $\cal F_1$-accessible. 
\end{theorem}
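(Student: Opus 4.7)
The plan is to construct, in each $\cal G$-component, a nested Borel separation system of bounded order whose induced tree decomposition has at-most-one-ended torsos lying in $\cal F$; then the natural Borel graph on the auxiliary space $Z$ realising these torsos will witness Borel $\cal F_1$-accessibility. I would proceed in three steps: first reduce to uniform $k$-accessibility, then construct the nested separation system via thin elements of the Boolean ring, and finally verify that the torsos lie in $\cal F_1$ before assembling the Borel graph $\cal H$.

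\textbf{Reduction.} Define $\kappa \colon X \to \NN$ by $\kappa(x) = \min\{k \in \NN : \cal G_x \text{ is } k\text{-accessible}\}$. This is a $\cal G$-invariant Borel function---$k$-accessibility of a component can be phrased as a Borel condition using the Boolean-ring machinery of the preliminaries (by Proposition \ref{prop:sep}, the existence of a $\leq k$-cut separating any two ends is witnessed by the thin elements of $\cal B_k \cal G_x$)---and its level sets partition $X$ into Borel $\cal G$-invariant pieces. Working on each piece separately, one may assume $\cal G$ is uniformly $k$-accessible for a fixed $k \in \NN$.

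\textbf{The nested system.} Let $\cal E_k \subset \Sep (\cal G)$ be the Borel set of separations $(S,B)$ with $|S| \le k$ such that $\Psi (S,B)$ is a thin element of the Boolean ring of the component containing $S$. By Proposition \ref{prop:locfin}, $\cal E_k$ is locally finite in each component, and it is globally Borel since thinness is a local condition in $\cal B_k G$. I would then apply a Dunwoody-style canonical un-crossing in the spirit of \cite{Car}: any two crossing thin elements of $\cal B_k G$ can be replaced by a canonical nested pair determined by finite local data in $\cal B G$, and iterating this Borel operation yields a nested Borel sub-family $\cal S \subset \cal E_k$ whose separations still separate every pair of ends in each component (the $k$-accessibility hypothesis together with Proposition \ref{prop:sep} guarantees that the un-crossed family still generates, or at least still separates the same ends as, the original $\cal E_k$). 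Since all separations in $\cal S$ have order $\le k$, Proposition \ref{prop:estar} supplies property $(\ast)$, and Proposition \ref{prop:CHM} yields a tree decomposition $(\cal T_{\cal S} , \cal V_{\cal S})$ on each component.

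\textbf{Torsos and assembly.} Each torso $H_t$ is a minor of its component $G \in \cal F$: each virtual edge $(x,y)$ across an adhesion $V_t \cap V_{t'}$ can be realised by a path through the side of the corresponding separation opposite to $V_t$, which is connected. By minor-closedness, $H_t \in \cal F$. Moreover, two distinct ends of $H_t$ would extend to distinct ends of $G$, which by $k$-accessibility and the construction of $\cal S$ are separated by some $(S,B) \in \cal S$; but any such separation places these two ends in different parts of the tree decomposition, contradicting that both lie in the single torso $H_t$. Hence each $H_t$ is at-most-one-ended and lies in $\cal F_1$. Finally, define the Borel graph $\cal H$ on $Z$ by declaring $(x,t) \sim_{\cal H} (y,t)$ iff either $(x,y) \in E(\cal G)$ or $x,y \in V_t \cap V_{t'}$ for some neighbour $t'$ of $t$ in $\cal T_{\cal S}$. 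Both conditions are Borel, the $\cal H$-component of $(x,t)$ is exactly $H_t$, so $\Rel (\cal H) = Q$ and $\cal H \in \Bor (\cal F_1)$. The main obstacle is the Borel un-crossing above: turning a locally finite Borel family of thin separations into a canonically nested Borel sub-family that still separates all ends. Proposition \ref{prop:locfin} furnishes the local finiteness required for a canonical algorithm to terminate, but arranging that the un-crossing rule is fully canonical (hence Borel) and that the end-separating property is preserved---via Proposition \ref{prop:sep} and the $k$-accessibility assumption---is the technical heart of the argument.
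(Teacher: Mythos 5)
Your overall structure (reduce to uniform $k$-accessibility, build a nested Borel separation system of bounded order via thin elements of the Boolean ring, then graph $Q$ by a graph close to the torsos) matches the paper, and your reduction and one-endedness sketch are essentially right. However, there is a genuine gap in the ``Torsos and assembly'' step, and it is precisely the point the paper takes care to avoid.

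You claim that each torso $H_t$ is a minor of its component $G$, because a virtual edge $(x,y)$ across an adhesion $V_t \cap V_{t'}$ can be realised by a path through the side opposite $V_t$. This works edge-by-edge, but it does not produce a minor model: when an adhesion set has three or more vertices, the torso contains a \emph{complete} graph on that adhesion, and the required paths through the opposite side need not be internally disjoint. A concrete failure: if the side opposite $V_t$ is a tree whose intersection with $V_t$ is a set of size $3$, the torso $H_t$ contains a triangle on that adhesion, but a tree has no $K_3$ minor, so $H_t$ is not a minor of $G$ and need not lie in $\cal F$. The paper explicitly flags this (``the torsos of the tree decomposition \dots may not be minors of $\cal G$'') and works around it by \emph{not} using the torso graph: instead of adding a complete graph on each adhesion, it constructs for each separation $(S,B) \in \cal S$ a partition of the opposite side into connected branch sets $\alpha^{(S,B)}_{s_i}$, one per adhesion vertex $s_i$, and adds the edge $((s_i,t),(s_j,t))$ to $\cal A$ only when an actual $\cal G$-edge joins $\alpha^{(S,B)}_{s_i}$ to $\alpha^{(S,B)}_{s_j}$. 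The resulting graph $\cal L\cup\cal A$ is then an honest minor of $\cal G$ in each component (hence in $\cal F$ by minor-closedness), it is locally finite, it still realises $Q$ (any path in $\cal G$ between vertices of $V_t$ lifts to a path in $\cal L\cup\cal A$), and the one-endedness argument goes through as you sketch. So you should replace your definition of $\cal H$ by a branch-set construction of this kind; otherwise the claim $\cal H\in\Bor(\cal F_1)$ is unjustified.

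A secondary, smaller concern: your ``Dunwoody-style canonical un-crossing'' is left as a black box, whereas the paper gives a concrete iterative Borel construction: build $\cal S^1 \subset \cal S^2 \subset \dots \subset \cal S^{\kappa}$, where $\cal S^{n+1}$ is obtained by taking a Borel kernel of the ``crossing'' graph on the Borel set $\cal D^{n+1}$ of separations nested with $\cal S^n$ whose $\Psi$-image is $(n+1)$-thin (this graph is locally finite by Proposition~\ref{prop:locfin}). You correctly identified this as the technical heart, but your proposal does not actually supply the mechanism; it is worth knowing that the maximal-nested-subfamily step can be realised Borel-ly as a kernel, rather than appealing to a generic ``canonical un-crossing rule.''
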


\begin{proof}

Let $\cal G \in\Bor (\cal F)$ with vertex space $X$. First, we show that there exists a Borel $\cal G$-invariant map $\kappa \colon X \rightarrow \NN$ such that $\kappa (x)$ is the least natural number such that $\cal G_x$ is $\kappa (x)$-accessible. To construct such $\kappa$, let each vertex $x\in X$ enumerate each separation in its component using Lusin-Novikov's theorem. Fix an $x\in X$ and let $((S_n,B_n))_{n\in \NN}$ be its corresponding enumeration. For each $n\in \NN$, let $k_n \in \NN$ be the least natural number such that for any two ends $\omega,\omega'\in \ends (\cal G)$ separated by $(S_n,B_n)$ there exists $(S,B) \in \Sep (\cal G)$ with $|B|\leq k_n$ separating $\omega$ and $\omega'$. In order to determine $k_n$ in a Borel fashion one may, more precisely, endow the set of bi-infinite paths with tails separated by $(S_n,B_n)$ a structure of standard Borel space using an inverse limit and then again use the enumeration $((S_n,B_n))_{n\in \NN}$. Such $k_n$ is finite by the accessibility hypothesis. Finally, define $\kappa (x) := \sup \{k_n\}_{n\in \NN}$.  Again, by the accessibility hypothesis, we have that $\kappa (x) \in \NN$. 

Our next goal is to show that there exists a nested Borel separation system $\cal S$ with property ($\ast$) such that for any two ends of $\cal G$ in the same component there exists a separation in $\cal S$ separating the two ends. In order to prove this, we run the Dicks-Dunwoody argument \cite[Section II.2]{DD} in the Borel setting.

We let $\cal S_1 \subset \Sep (\cal G)$ be the Borel subset of those $(S,B) \in \Sep (\cal G)$ with $|B|=1$. It is easy to see that $\cal S^1$ is nested, and that $\Psi (\cal S_1)$ is thin. Let us suppose that we are given $\cal S^n$ for some $n< \kappa$ such in each $\cal G$-component $G$ the Boolean ring elements of $\Psi (\cal S^n_G) \subset \cal B G$ consist of thin elements generating $\cal B_n  G$. This is satisfied for $n=1$. Our aim is to find $\cal S^{n+1}$ such that $\cal S^n \subset \cal S^{n+1}$, and $\Psi (\cal S^{n+1}_G)$ is a thin generating set of $\cal B_{n+1} G$ for every $\cal G$-component $G$. By the Dicks-Dunwoody argument \cite[Section II.2]{DD}, it suffices for $\cal S^{n+1}$ to be the separation system induced by a maximal nested set of separations containing $\cal S^n$ and such that $\Psi (S,B)$ is thin for every $(S,B) \in \cal S^{n+1}$.

In order to find such $\cal S^{n+1}$, let $\cal D^{n+1}$ be the Borel subset of $[X]^{< \infty} \times [\cal G]^{n+1}$ consisting of those separations nested with $\cal S_n$ and such that their $\Psi$-image is $n+1$-thin. The Borel graph $\cal H_{n+1}$ on $\cal D_{n+1}$ defined by two separations being non-nested is locally finite. Indeed, for two such separations $(S,B)$ and $(S',B')$ to be not nested, we have that $(S',B')$ must separate at least two vertices in $S$. However, by Proposition \ref{prop:locfin} there are only finitely many such possible separations. Therefore, there exists a Borel kernel $\cal S^{n+1}$ for $\cal H_{n+1}$, which has the desired properties.

Let $\cal S:= \cal S^{\kappa}$. By Proposition \ref{prop:sep}, since $\Psi(\cal S)$ generates $\cal B_{\kappa} \cal G$ and components of $\cal G$ are $\kappa$-accessible, $\cal S$ has the required property: for any two ends $\omega,\omega'\in \ends (\cal G)$ defined in the same $\cal G$-component there exists $(S,B)\in \cal S$ separating them.

Let  $Q$ be the levels equivalence relation associated to $\cal S$ and $Z$ its vertex space as constructed in the previous section. Let $\cal L$ be the Borel graph on $Z$ defined by letting $(x,t) \sim_{\cal L} (y,t')$ if and only if $x \sim_{\cal G} y$ and $t = t'$. The Borel graph $\cal L$ may not yield the equivalence relation $Q$ whilst the torsos of the tree decomposition, which may be constructed as in Example \ref{ex:Tuttemes}, may not be minors of $\cal G$. Hence, we need to add edges to $\cal L$ in a slightly more cautious way in order to conclude the proof. 

Let us fix a Borel linear order on $X$. Hence, for each separation $(S,B)\in \cal S$ we may enumerate $S = \{s_1,\dots,s_{|S|}\}$ using the Borel linear order. Let $(A_1,A_2)$ denote the sides of $(S,B)$. Let $\alpha^{(S,B)}_{s_1}$ denote the connected subgraph of $\cal G$ obtained as the connected component of $\cal G[A_2]\backslash\{s_2,\dots,s_{|S|}\}$ containing $s_1$. For $i\in \{1,\dots,|S|\}$, let $\alpha^{(S,B)}_{s_i}$ denote the connected subgraph of $\cal G$ obtained as the connected component of $$
\cal G[A_2] \backslash [(\cup_{j< i} U_j) \backslash (\cup_{i< l\leq |S|} \{u_l\}),
$$ 
containing $s_i$. For each $(x,t) \in Z$ we let $\alpha_{(x,t)}$ denote the union of all the $\alpha_x^{(S,B)}$ where $(S,B) \in t$ and $x\in S$.

We let $\cal A \subset Z\times Z$ be the Borel set of edges of the form $(x,t) \sim_{\cal A} (y,t')$ if and only if $t = t'$ and there exists $(S,B) \in t$ with  $x,y\in S$ such that there exists an edge $e\in \cal G$ with one end-vertex in $\alpha_{(x,t)}$ and the other end-vertex in $\alpha_{(y,t)}$. We claim that components of $\Rel (\cal L\cup \cal A) = Q$ and that $\cal L\cup \cal A\in \cal B(\cal F_1)$.

Let us first show that for every path $P \subset \cal G$ with end-vertices $u,v$ in $V_t$ there exists a path $\tilde P \subset \cal L \cup \cal A$ with end-vertices $(u,t)$ and $(v,t)$. This proves that $\Rel (\cal L\cup \cal A) = Q$. Let $P$ be such a path with end-vertices in $V_t$. The path $P$ is a concatenation of maximal subpaths in $\cal L := \cal G [V_t]$ and maximal subpaths with edges not in $\cal L$. It thus suffices to prove our claim if $P$ is a path with both end-vertices $u,v$ in $V_t$, but all internal vertices not in $V_t$. By Proposition \ref{prop:CHM}, there exists $(S,B)\in \cal S$ such that $u,v \in  S$ and $V(P) \subset V(A_2) \cup \{u,v\}$, where $(A_1,A_2)$ are the sides of $(S,B)$. We then let $\tilde P$ be the path with edges $((s,t),(s',t))\in \cal A$ if and only if $s,s'\in S$ and there exists an edge in $P$ with one end-vertex in $\alpha_s^{(S,B)}$ and the other in $\alpha_{s'}^{(S,B)}$. By construction the path $\tilde P$ has the desired properties.

It is only left to show that $\cal L\cup \cal A\in \Bor(\cal F_1)$. This requires proving that $\cal L \cup \cal A \in \Bor (\cal F)$ and that $\cal L \cup\cal A$ has at most one-ended components. To show that $\cal L \cup \cal A \in \Bor (\cal F)$ we observe that, by construction, the components of $\cal L\cup \cal A$ are minors of components of $\cal G$, which are in $\cal F$. Moreover, since each element of $X$ is contained in the adhesion set of at most finitely many separations of $\cal S$ by an argument along the line of that in Proposition \ref{prop:locfin}, then $\cal L\cup \cal A$ is locally finite. Therefore, components of $\cal L \cup \cal A$ are locally finite minors of graphs in $\cal F$, hence are in $\cal F$ by hypothesis. We deduce that $\cal L \cup \cal A \in \Bor (\cal F)$.

To prove one-endedness, let $R_1$ and $R_2$ be two rays in the same $\cal L \cup \cal A$-component. Let $P_1$ and $P_2$ be two rays in $\cal G$ intersecting the rays $R_1$ and $R_2$ infinitely often. The rays $P_1$ and $P_2$ must be equivalent. Otherwise, by the accessibility hypothesis and Proposition \ref{prop:sep} there would exist a separation in $\cal S$ separating tails of $P_1$ and $P_2$. This would imply that tails of $R_1$ and $R_2$ cannot be contained in the same level.

Let $U\subset Z$ be a finite set of vertices in the $Q$-class supporting $R_1$ and $R_2$. Then, by the above, there exists a path $J$ in $\cal G$ with end-vertices $x,y\in X$ such that $(x,t)\in V(R_1)$ and $(y,t) \in V(R_2)$ and such that $V(J)$ does not intersect $p(U)$ or any adhesion set $S$ of a separation $(S,B)\in \cal S$ such that $p(U)\cap S \neq \emptyset$, where $p\colon Z \rightarrow X$ is as in Proposition \ref{prop:Z}. Indeed, observe that there are finitely many such separations $(S,B)$, and so existence of $J$ follows from equivalence of $P_1$ and $P_2$. Then, the path $\tilde J$ constructed as above has end-vertices in $R_1$ and $R_2$, but contains no vertex in $U$. Hence, we conclude that $R_1$ and $R_2$ are equivalent, concluding the proof.
\end{proof}

Before we address the applications of the two theorems above, let us pose a question.

\begin{question*}[Question \ref{rem:q2}]\label{q2}
For a locally finite borel graph $\cal G$, does there exist a nested Borel  separation system with property ($\ast$) separating any two ends in the same component?
\end{question*}

\begin{remark}\label{rem:q2}
A positive answer to Question \ref{rem:q2} would imply the following. Let $\cal G$ be a Borel graph and $\cal S$ a nested Borel separation system with property ($\ast$) separating any two ends in the same component. Running the construction of the proof of Theorem \ref{tma:acces} using $\cal S$, would yield a Borel graph $\cal H$ such that $\Rel (\cal H) = Q$, components of $\cal H$ are at most one-ended, and $\cal H$-components are $\cal G$-minors. This would imply that, for a minor-closed family of locally finite connected graphs $\cal F$, every Borel graph in $\Bor (\cal F)$ is $\cal F_1$-accessible.

Hence, a positive answer to Question \ref{q2} would imply that locally finite Borel graphs with no thick ends are Borel treeable from Theorem \ref{tma:thin}. Similarly, it would imply from Theorem \ref{tma:planar} that all locally finite Borel graphs with planar components are measure treeable.
\end{remark}

As a first corollary of the above theorems, we get our first application below. Let $\Thin$ denote the class of connected locally finite graphs with no thick ends. We have the following theorem.

\begin{theorem}\label{tma:thin}
Borel $\Thin_1$-accessible graphs are Borel treeable. 
\end{theorem}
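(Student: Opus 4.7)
\noindent\emph{Proof plan.} By Theorem~\ref{tma:Faccessible}, it suffices to show that every locally finite Borel graph $\cal G$ with components in $\Thin_1$ is Borel treeable. Fix such $\cal G$ on a standard Borel space $X$; each component $\cal G_x$ is one-ended and its unique end is thin of some finite degree $\kappa(x)\in\NN$, i.e.\ the maximum number of pairwise vertex-disjoint rays in that end. Mirroring the opening of the proof of Theorem~\ref{tma:acces} and using a Lusin--Novikov enumeration of candidate ray tuples and separations, one checks that $\kappa$ is Borel and $\cal G$-invariant; partitioning $X$ along the level sets $\kappa^{-1}(k)$ and handling each separately, one may assume $\kappa\equiv k$ on $X$ for a fixed $k$.

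The next plan is to construct a nested Borel separation system $\cal S\subset\Sep(\cal G)$ of order at most $k$ whose induced tree decomposition has finite parts, after which Theorem~\ref{tma:maintech} will conclude the argument. In the style of Example~\ref{ex:Tuttemes}, I would let $\cal S$ be the Borel set of all order-$\le k$ separations that are nested with every other order-$\le k$ separation in their own component. Then $\cal S$ is nested by construction, Proposition~\ref{prop:estar} grants property~$(\ast)$, and Proposition~\ref{prop:CHM} supplies a tree decomposition $(\cal T_{\cal S_G}, \cal V_{\cal S_G})$ in each component $G$.

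The core step is to show that every part $V_t$ of this decomposition is finite. Fix a component $G$ with unique end $\omega$ and suppose for contradiction that some $V_t$ is infinite. By local finiteness, König's lemma produces a ray $R\subseteq V_t$, and one-endedness forces $R$ to represent $\omega$; consequently $\omega$ lies on the same side of every separation in $\cal S_G$ as $V_t$. On the other hand, thinness of $\omega$ of vertex-degree $k$ combined with the vertex version of Menger's theorem for ends supplies, for every $v\in V_t$, a vertex cut of size $\le k$ separating $v$ from $\omega$; an argument in the spirit of \cite{CHM} and of the Tutte decomposition (Example~\ref{ex:Tutte}) shows that canonical level-type cuts of this kind can be chosen nested with every other order-$\le k$ separation, hence belong to $\cal S_G$. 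The resulting separation places $v\in V_t$ and $\omega$ on opposite sides, contradicting the previous line, so $V_t$ is finite.

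Therefore the levels equivalence relation $Q$ associated to $\cal S$ has finite classes, is smooth, and is trivially Borel treeable via any Borel selector; Theorem~\ref{tma:maintech} then yields that $\Rel(\cal G)$ is Borel treeable. The step I expect to require the most care is identifying level-type cuts in $\cal S_G$ that witness the finiteness-of-parts argument, since the combinatorics of nestedness among order-$\le k$ separations in a one-ended thin graph can be intricate; a Borel refinement of Carmesin's nested separation theorem from \cite{Car} or of the construction in \cite{CHM} appears to be the appropriate tool.
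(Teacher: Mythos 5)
Your proof takes a genuinely different route from the paper. After the common reduction to Borel graphs with components in $\Thin_1$ via Theorem~\ref{tma:Faccessible}, the paper does \emph{not} build a second nested separation system. Instead it takes an aperiodic Borel spanning subforest $\cal F\subset\cal G$ (Proposition~\ref{prop:forex}), observes that each infinite $\cal F$-component contains a ray in $\cal G$ representing the unique thin end of degree $k$, so that each $\cal G$-class contains at most $k$ such disjoint components and each $\cal F$-component has at most $k$ ends, and then cites Miller's end results and DJK to get hyperfiniteness of $\Rel(\cal F)$, together with JKL's finite-index transfer, to conclude hyperfiniteness of $\Rel(\cal G)$ (cf.~Corollary~\ref{cor:hyper}). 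That argument is shorter, avoids any discussion of separator canonicity, and actually yields the stronger conclusion of hyperfiniteness rather than just treeability.

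Your route, if the key combinatorial claim held, would also prove the theorem, but that claim has a real gap which you flag yourself. You need: for every vertex $v$ in an infinite part $V_t$ there is an order-$\le k$ vertex cut separating $v$ from $\omega$ which is \emph{nested with every other order-$\le k$ separation} in the component, so that it lies in your $\cal S_G$. This is not established, and the Tutte-decomposition analogy actually cuts against you: inside a cycle-torso, the $2$-separations cross pairwise and \emph{none} of them survives into the canonical system; in the same way, a priori, all order-$\le k$ separations that capture $\omega$ near a given $v$ could be crossed by other order-$\le k$ separations and thus excluded from $\cal S$. The references you cite concern nesting of separations distinguishing profiles/ends of the \emph{same} order, not nesting with arbitrary order-$\le k$ separations; to repair the step one would need a dedicated uncrossing argument for the single thin profile (or the Dicks--Dunwoody construction already used in Theorem~\ref{tma:acces}) together with a proof that the resulting tree decomposition has finite parts in the one-ended, thin-ended case, and then that all of this can be done Borel-ly. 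Finally, the K\"onig's-lemma step as stated is imprecise: a ray need not lie inside $V_t$, only meet it infinitely often. This part is patchable, since for a one-ended graph exactly one side of each proper separation is infinite and $V_t$ infinite forces $V_t$ and the end onto that side, but as written the inference is not justified.
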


Before we address the proof of the theorem, let us point out the following more concrete corollary of Theorem \ref{tma:thin}.

\begin{corollary}\label{cor:bddtw}
The following classes of Borel graphs are Borel $\Thin_1$-accessible, hence Borel treeable:
\part Borel graphs with no thick ends and accessible components. 

\part Uniformly locally finite Borel graphs with components of bounded tree-width.

\trap 
\end{corollary}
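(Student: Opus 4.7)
The plan is to derive both parts from Theorem \ref{tma:thin} by verifying that each Borel graph in the statement is Borel $\Thin_1$-accessible. The main tool is Theorem \ref{tma:acces} applied with $\cal F = \Thin$, which is a minor-closed family of locally finite connected graphs by Halin's Theorem (cited in the introduction). Thus it will suffice to show that the graphs in (a) and (b) lie in $\Bor(\Thin)$ and have accessible components; Theorem \ref{tma:acces} then produces a nested Borel separation system with property ($\ast$) exhibiting Borel $\Thin_1$-accessibility, and Theorem \ref{tma:thin} concludes.

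For part (a) the two required conditions are exactly the hypothesis, so no further work is needed and Theorem \ref{tma:acces} applies immediately.

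For part (b), let $\cal G$ be a Borel graph of maximum degree at most $d$ whose components have bounded tree-width. I would first check that a locally finite graph $G$ of tree-width $k$ has no thick ends: fixing a tree decomposition $(T,\cal V)$ of width $k$, every adhesion set has size at most $k+1$, and a standard argument identifies every end $\omega$ of $G$ with an end of $T$; any family of pairwise disjoint rays in $\omega$ must simultaneously cross each adhesion set along the corresponding tree ray, giving at most $k+1$ such rays. Hence every component of $\cal G$ lies in $\Thin$, so $\cal G \in \Bor(\Thin)$. Next, by the proposition in the preliminaries, components of bounded tree-width are vertex-accessible; combined with the uniform degree bound $d$, any vertex separator of size $m$ produces an edge separator of size at most $md$ (its graph boundary), so components of $\cal G$ are edge-accessible. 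Theorem \ref{tma:acces} now applies and yields Borel $\Thin_1$-accessibility, whence Borel treeability by Theorem \ref{tma:thin}.

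The only step requiring genuine care is the folklore assertion that bounded tree-width forces the absence of thick ends; it is a routine consequence of how rays in a fixed end must thread through adhesion sets, but the bookkeeping with the decomposition tree is where any subtleties arise. The uniform local finiteness hypothesis in (b) plays no other role than to upgrade vertex-accessibility to edge-accessibility, matching the notion of accessibility used in Theorem \ref{tma:acces}.
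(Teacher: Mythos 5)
Your argument matches the paper's proof essentially step for step: appeal to Halin's theorem that $\Thin$ is minor-closed, apply Theorem \ref{tma:acces} with $\cal F = \Thin$, note for (a) the hypotheses are immediate, and for (b) observe that bounded tree-width implies no thick ends and that $k$-vertex-accessibility plus a degree bound $D$ gives $kD$-edge-accessibility. The only difference is that you spell out in a little more detail why bounded tree-width forbids thick ends, a point the paper merely asserts; otherwise the two proofs are the same.
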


\begin{proof}
By Halin's Theorem \cite{Hal}, the family $\Thin$ is minor-closed. Therefore, part (a) follows directly from Theorem \ref{tma:acces}. In order to prove part (b), note that graphs of bounded tree-width have no thick ends. Hence, it is enough to show that uniformly locally finite graphs of bounded tree-width are accessible. But a graph $G$ with tree-width at most $k$ is $k$-vertex-accessible, so $kD$-edge-accessible, where $D$ is the maximum degree.
\end{proof}

\begin{remark}
In order to extend the above corollary to arbitrary locally finite Borel graphs with components of bounded tree-width it would suffice to either show that locally finite connected graphs with bounded tree-width are accessible or to provide a positive answer to Question \ref{q2}.
\end{remark}

We conclude this section with the proof of Theorem \ref{tma:thin}.

\begin{proof}[Proof of Theorem \ref{tma:thin}]

Let $\cal F \subset \cal G$ be an aperiodic Borel subforest of $\cal G$, existent by Proposition \ref{prop:forex}. Each component of $\cal F$ contains a ray in $\cal G$ representing the unique end of its component. Hence, for ends of $\cal G$ being thin there are finitely many such possible disjoint rays. We have that $\Rel (\cal F)$ has finite index in $\Rel (\cal G)$, and that $\cal F$-components have finitely many ends. By \cite[Theorems D and E]{Mil} and \cite[Section 8]{DJK} we have that $\cal F$ is a hyperfinite finite index subequivalence relation of $\cal G$ a.s. We deduce that $\Rel (\cal G)$ is $\mu$-hyperfinite by \cite[Proposition 1.3.vii]{JKL}, so $\mu$-treeable.
\end{proof}

Observe that the above argument yields the following corollary.

\begin{corollary}\label{cor:hyper}
Let $\cal G$ be an at most one-ended locally finite Borel graph. If $\cal G$ has no thick ends, then $\Rel (\cal G)$ is hyperfinite.
\end{corollary}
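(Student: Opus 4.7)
The plan is to reuse, almost verbatim, the argument that appeared at the end of the proof of Theorem~\ref{tma:thin}, stopping at the hyperfiniteness step rather than continuing to the treeability conclusion. First I would apply Proposition~\ref{prop:forex} to produce an aperiodic Borel subforest $\cal F\subset\cal G$; aperiodicity ensures every $\cal F$-component is an infinite locally finite tree sitting inside some $\cal G$-component, and hence contains a ray by K\"onig's lemma.

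The key observation is that $\Rel(\cal F)$ is a finite-index Borel subequivalence relation of $\Rel(\cal G)$ with each $\cal F$-component finitely-ended. Fix a $\cal G$-component $G$. By hypothesis $G$ is either finite (in which case the restrictions are trivial) or one-ended with a thin end, so there is a bound $k=k(G)$ on the maximum size of a family of pairwise disjoint rays in $G$. Any two distinct $\cal F$-components of $G$ are vertex-disjoint infinite subtrees, each of which contains a ray, producing pairwise disjoint rays in $G$; hence $G$ contains at most $k$ distinct $\cal F$-components, which is the finite-index condition. The same disjoint-rays argument bounds the number of ends of any single $\cal F$-component: inequivalent rays inside one $\cal F$-component remain pairwise inequivalent, and thus eventually disjoint, inside $G$.

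Having a Borel forest with finitely many ends per component, I would then cite \cite[Theorems D and E]{Mil} together with \cite[Section 8]{DJK} to conclude that $\Rel(\cal F)$ is hyperfinite. Finally, since $\Rel(\cal F)$ has finite index in $\Rel(\cal G)$, hyperfiniteness transfers to the ambient equivalence relation via \cite[Proposition~1.3.vii]{JKL}, giving the desired conclusion that $\Rel(\cal G)$ is hyperfinite.

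The only point that requires care, and is the main potential obstacle, is packaging the one-ended, thin-end hypothesis uniformly across $\cal G$-components so that both the finite-index claim and the bounded-ends-per-$\cal F$-component claim hold in the Borel (rather than merely measured) sense; no new graph-theoretic input beyond what already appears in the proof of Theorem~\ref{tma:thin} is needed.
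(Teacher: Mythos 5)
Your proposal is correct and follows essentially the same route as the paper: the paper derives Corollary~\ref{cor:hyper} exactly as you describe, by taking the aperiodic Borel subforest $\cal F$ from Proposition~\ref{prop:forex}, observing that the thin-one-end hypothesis bounds the number of disjoint rays in each component so that $\Rel(\cal F)$ has finite index in $\Rel(\cal G)$ and each $\cal F$-component has finitely many ends, and then invoking Miller and Dougherty--Jackson--Kechris to get hyperfiniteness of $\Rel(\cal F)$ and \cite[Proposition~1.3.vii]{JKL} to pass to the finite-index extension. Your final worry about uniformity is unnecessary: the finite-index condition and the finitely-many-ends condition are per-class statements and need no uniform bound across components for the cited results to apply.
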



\section{Applications to planar graphs}

The goal of this section is to deduce that locally finite Borel graphs with planar accessible components are measure treeable, thus generalising results in \cite{CGMT} and \cite{Timar}. We will achieve this applying Theorem \ref{tma:acces}. Hence, letting $\cal P$ be the family of Borel graphs with planar components, our goal is to prove the following.

\begin{theorem}\label{tma:one-end-plan}
Let $\cal G$ be a locally finite Borel graph with planar components. If $\cal G$ is at most one-ended, then $\cal G$ is measure treeable.
\end{theorem}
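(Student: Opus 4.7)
My plan is to apply Theorem \ref{tma:maintech} twice to reduce the problem to Borel graphs with $3$-connected planar at-most-one-ended components, and then invoke Corollary \ref{cor:dosbase} together with the CGMT measure-treeability criterion \cite{CGMT} for Borel graphs admitting a Borel $2$-basis.

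First, I take the nested Borel separation system $\cal S_1 \subset \Sep(\cal G)$ induced by the order-$1$ separations; it has property $(\ast)$ by Proposition \ref{prop:estar}. Following the construction of Example \ref{ex:Tuttemes}, I build on the auxiliary space $Z_1$ of Theorem \ref{tma:maintech} a locally finite Borel graph $\cal H_1$ with $\Rel(\cal H_1)$ equal to the associated levels equivalence relation $Q_1$; because adhesion sets are singletons, no virtual edges arise and the components of $\cal H_1$ are single edges or blocks of components of $\cal G$. A standard block-cut-vertex tree argument shows that each such block is itself at most one-ended: two infinite blocks in the same $\cal G$-component would share at most one cut vertex, whose removal would separate their ends in $\cal G$. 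Applying Theorem \ref{tma:maintech} a second time, now to $\cal H_1$ with the Tutte separation system of Example \ref{ex:Tuttemes}, yields a Borel graph $\cal H_2$ whose components are the Tutte torsos: by Example \ref{ex:Tutte} each is either a cycle or $3$-connected, by Lemma \ref{lem:margullo} each remains planar, and by viewing the torso as a topological minor of its block (realizing each virtual edge by an internally disjoint path across the corresponding $2$-separation) each remains at most one-ended.

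It thus suffices to show $\Rel(\cal H_2)$ is measure treeable. Partition the vertex space of $\cal H_2$ Borelly and $\Rel(\cal H_2)$-invariantly into $X_C$ (cycle or single-edge components) and $X_3$ ($3$-connected components); both conditions are Borel via a finitary witness and Lusin--Novikov. On $X_C$ the equivalence relation is hyperfinite, hence measure treeable, since finite components admit Borel spanning trees and the only non-finite possibility (a double ray) is already a tree. On $X_3$ each component is $3$-connected planar with at most one end, so it admits an accumulation-free embedding in $\RR^2$ (send the unique end to infinity; finite components are trivial) and hence a $2$-basis by Thomassen's Theorem \ref{tma:Thom80}. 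Corollary \ref{cor:dosbase} then upgrades this to a Borel $2$-basis of $\cal H_2|_{X_3}$, and \cite{CGMT} yields measure treeability. Combining the two sides, $\Rel(\cal H_2)$ is measure treeable, and two applications of Theorem \ref{tma:maintech} propagate this back to $\Rel(\cal G)$. The main obstacle I foresee is making rigorous the claim that $3$-connected Tutte torsos of a locally finite at-most-one-ended planar $2$-connected graph are themselves at most one-ended; the topological-minor picture gives the right intuition, but one must handle the infinitely many $2$-separations potentially adjacent to a single torso and verify that two inequivalent rays in the torso necessarily lift to two inequivalent rays in the block.
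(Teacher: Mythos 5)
Your proposal follows essentially the same route as the paper's proof: reduce via Tutte decompositions and Theorem \ref{tma:maintech} to at-most-one-ended $3$-connected planar components, obtain a $2$-basis from an accumulation-free embedding by Thomassen's theorem, promote it to a Borel $2$-basis via Corollary \ref{cor:dosbase}, and conclude by the CGMT criterion. You are in fact somewhat more careful than the paper's terse write-up, which tacitly skips the preliminary block decomposition (Example \ref{ex:Tuttemes} assumes $2$-connected components) and asserts ``without loss of generality $3$-connected'' without addressing the cycle torsos, both of which you handle explicitly; the concern you flag about one-endedness of the Tutte torsos is a genuine technical point, but it is shared with the paper, which also only refers to ``an argument along the lines of'' the one-endedness step in the accessibility theorem.
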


By Theorem \ref{tma:acces}, the above will imply the following. 

\begin{theorem}\label{tma:planar}
Borel $\cal P_1$-accessible Borel graphs are measure treeable. In particular, Borel graphs with accessible planar components are measure treeable.
\end{theorem}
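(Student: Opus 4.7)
The plan is to assemble Theorem \ref{tma:planar} as a direct consequence of three ingredients already set up in the paper: Theorem \ref{tma:one-end-plan} (measure treeability of at-most one-ended locally finite Borel graphs with planar components), Theorem \ref{tma:Faccessible} (the abstract treeability transfer principle for $\cal F$-accessible Borel graphs), and Theorem \ref{tma:acces} (accessibility of components of a Borel graph in a minor-closed class $\cal F$ upgrades to Borel $\cal F_1$-accessibility).

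The first step will be to observe that the class $\cal P$ of locally finite connected planar graphs is minor-closed. This is standard: taking minors preserves spherical embeddability of finite subgraphs, and by Lemma \ref{prop:sphericalfin2inf} this is enough. Alternatively, one can invoke Kuratowski's characterisation since the property of avoiding $K_5$ and $K_{3,3}$ as topological minors is minor-closed. In either case, Theorem \ref{tma:acces} applies to $\cal F = \cal P$.

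Next, I would specialise Theorem \ref{tma:Faccessible} to the family $\cal F = \cal P_1$: Theorem \ref{tma:one-end-plan} says precisely that every Borel graph in $\Bor(\cal P_1)$ is measure treeable, so Theorem \ref{tma:Faccessible} yields that every Borel $\cal P_1$-accessible locally finite Borel graph is measure treeable. This gives the first (and main) assertion of Theorem \ref{tma:planar}.

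Finally, for the ``in particular'' clause, let $\cal G$ be a locally finite Borel graph with accessible components in $\cal P$. Applying Theorem \ref{tma:acces} to the minor-closed family $\cal P$, we conclude that $\cal G$ is Borel $\cal P_1$-accessible, and the previous paragraph delivers measure treeability. No step here is genuinely difficult: the entire substance of Theorem \ref{tma:planar} is bundled into Theorem \ref{tma:one-end-plan}, whose proof, carried out in Section 5, is where the real work lies (reducing to the one-ended planar case so as to invoke the Borel 2-basis treeability result of \cite{CGMT}, and handling planarity in the Borel setting via Borel spherical extensions).
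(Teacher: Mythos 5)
Your proposal is correct and coincides with the paper's (implicit) argument: the paper states just before Theorem~\ref{tma:planar} that it follows from Theorem~\ref{tma:one-end-plan} via Theorem~\ref{tma:acces}, which together with Theorem~\ref{tma:Faccessible} is exactly the three-step assembly you describe. Your additional remark that $\cal P$ is minor-closed (needed for Theorem~\ref{tma:acces}) is a point the paper leaves tacit, and your justification for it is sound.
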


As we shall discuss in Example \ref{ex:freepakosanz}, the family of $\cal P_1$-accessible Borel graphs contains unimodular random graphs which are not accessible a.s. Therefore, the above theorem applies beyond Borel graphs with accessible components.

Our strategy to prove Theorem \ref{tma:one-end-plan} is to apply the treeability result for Borel graphs with a Borel 2-basis \cite[Theorem 3.6]{CGMT}. In order to apply this result, we first need to induce a Borel 2-basis structure on an arbitrary Borel graph with planar components or some suitable extension. In this direction, we introduce Borel spherical graphs below. Roughly speaking, a Borel spherical graph is a Borel graph with planar components together with a Borel structure encoding the combinatorial information of an embedding in the 2-dimensional sphere $\SS$. 

\begin{definition}\label{def:borsph}
A locally finite Borel graph $\cal G$ on a standard Borel space $X$ is \emph{Borel spherical} if $\cal G$ is equipped with a Borel map $\omega\colon X \rightarrow \Cyclic (X)$ such that for every $x\in X$, $\om(x)$ is a cyclic order on $N_{\cal G}(x)$, and  $\{\om (x)\}_{x \in V(\cal G_x)}$ is a spherical rotation system on $\cal G_{x}$.  Such a map $\omega$ is called a \emph{Borel rotation system}.
\end{definition}

In the next example we show that Borel graphs with planar components are not necessarilly  Borel spherical.
 
 \begin{example}\label{ex:adams}
Let $\cal L$ be a Borel forest of lines on a standard Borel space $X$ which is not generated by a Borel $\ZZ$-action. Such an example was first proposed by Adams \cite{Adams90}. We consider the graph $\cal G$ on $X \times \ZZ_4$ given by $(x,i) \sim (y,j)$ if and only if $x \sim_{\cal L} y$ and $i = j$, or, $x=y$ and $i = j\pm 1 \pmod 4$. Components of $\cal G$ are  isomorphic to the graph of Example \ref{ex:miticu1}, so planar. 

However, $\cal G$ is not Borel spherical. Indeed, we show that if $\omega$ is a Borel rotation system on $\cal G$, then $\cal L$ admits a Borel $\ZZ$-action. For a vertex $x \in X$, let $C_x$ denote the oriented cycle $(x,0) \rightarrow (x,1) \rightarrow \dots \rightarrow (x,0)$. Fix a finite connected subgraph $L $ of $\cal L$. Consider an embedding $\alpha : \cal G [V(L) \times \ZZ_4] \rightarrow \RR^2$ such that the restriction of $\omega$ is induced by $\alpha$ and such that for every $x \in V(L)$, the directed cycle $\alpha (C_x)$ winds once around the origin in a counterclockwise fashion. This embedding induces a linear order $\leq_L$ on $V(L)$ by letting $x \leq y$ if and only if $\alpha (C_x)$ lies inside $\alpha (C_y)$. Note that the order on $L$ is independent of the choice of $\alpha$ after the restriction on the cycles $\alpha (C_x)$ of winding counterclockwise around the origin.

Using Whitney's Theorem, we see that if $L \subset K$ are both finite connected subgraphs of $\cal L$, then $\leq_L$ coincides with the restriction of $ \leq_K $ to $V(L)$. These orders assign an orientation in a Borel fashion to the orbits of $\cal L$, hence inducing a Borel $\ZZ$-action, contradicting the undirectability of $\cal L$.
\end{example}

Despite of the above example, one can prove that Borel graphs with planar components admit Borel spherical extensions. 

\begin{prop}\label{prop:ext}
Let $\cal G$ be a locally finite Borel graph with 3-connected planar components. Then, there exist a Borel spherical graph $\cal H$ and a 2-to-1 extension $\cal H \rightarrow \cal G$. 
\end{prop}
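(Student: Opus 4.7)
The plan is to lift each vertex of $\cal G$ to two copies, one for each of the two orientation classes of spherical embeddings of its component guaranteed by Imrich's theorem. Applying Imrich's theorem to each $3$-connected spherical component $\cal G_x$, the rotations at $x$ induced by a spherical embedding of $\cal G_x$ are exactly $\omega$ and $\omega^-$ for any fixed such $\omega$; since each vertex in a $3$-connected graph has degree at least $3$, these two rotations are distinct.

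I would first define the vertex space
$$
\tilde X = \{(x,\omega) \in X \times \Cyclic (X) : \omega \text{ is a rotation at } x \text{ induced by some spherical embedding of } \cal G_x\}.
$$
By Lemma \ref{lem:whitcore}, membership in $\tilde X$ rewrites as the countable conjunction, over $r \in \NN$, of the decidable combinatorial condition that $\omega$ is induced by a spherical embedding of the finite ball $B_r(x)$. Hence $\tilde X$ is a Borel subset of $X \times \Cyclic(X)$, and the projection $p\colon (x,\omega)\in \tilde X \mapsto x \in X$ is Borel and exactly $2$-to-$1$.

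Next, I would define $\cal H$ by letting $(x,\omega_x)\sim_{\cal H}(y,\omega_y)$ if and only if $(x,y)\in \cal G$ and some spherical embedding of $\cal G_x$ induces both $\omega_x$ at $x$ and $\omega_y$ at $y$; by the same Lemma \ref{lem:whitcore} argument this is a Borel condition. Imrich's theorem then implies that, for each $(x,\omega_x)\in \tilde X$ and each $y \in N_{\cal G}(x)$, there is a unique $\omega_y$ making $(x,\omega_x) \sim_{\cal H} (y,\omega_y)$, so $p$ restricts to a bijection from $N_{\cal H}((x,\omega_x))$ onto $N_{\cal G}(x)$. To check that $p$ is a $2$-to-$1$ extension, I would lift paths: for any component $\tilde G$ of $\cal H$ with $p(\tilde G) \subset G$ a component of $\cal G$, each $\cal G$-path $x = x_0, x_1,\ldots,x_n = y$ lifts uniquely to an $\cal H$-path from $(x,\omega_x)$ by iterating the unique-compatible-rotation step above. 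This yields both surjectivity of $p|_{\tilde G}$ and injectivity (any two lifts of $y$ in $\tilde G$ must agree), so $p|_{\tilde G}\colon \tilde G \to G$ is a graph isomorphism, and the two preimages of $G$ correspond to the two global orientations $\omega$ and $\omega^-$. The Borel rotation system on $\cal H$ is then obtained by transporting $\omega_x$ along the bijection $p|_{N_{\cal H}((x,\omega_x))}$, which pulls back the spherical rotation system on $G$ to one on $\tilde G$.

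The main obstacle is verifying Borelness of the condition defining $\tilde X$ and of the edges of $\cal H$. Lemma \ref{lem:whitcore} is essential here: via $3$-connectivity it reduces the global condition of extending to a spherical embedding of the infinite graph $\cal G_x$ to a countable intersection of decidable conditions on finite balls $B_r(x)$. Without $3$-connectivity Imrich's theorem fails and this reduction breaks down, which is why the $3$-connectedness hypothesis cannot be dropped in this approach.
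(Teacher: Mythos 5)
Your construction is correct and does produce a Borel spherical $2$-to-$1$ extension, but it takes a genuinely different route from the paper's. You build the double cover directly on the set $\tilde X$ of pairs $(x,\omega_x)$ with $\omega_x$ a spherically induced rotation at $x$, and put an $\cal H$-edge between $(x,\omega_x)$ and $(y,\omega_y)$ exactly when the two rotations are simultaneously realised by a common embedding. The paper instead first selects, via Lemma \ref{prop:rota} and Lusin--Novikov, a Borel map $x\mapsto\omega(x)$ fixing one of the two rotations at each vertex, records the orientation mismatch across each edge as a $\ZZ_2$-valued Borel cocycle $c$, and defines $\cal H$ on $X\times\ZZ_2$ twisted by $c$. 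The substance of the paper's proof is the Borelness of $c$, established by exhibiting two cycles $A,B$ through each edge $(x,y)$ (via Menger and $3$-connectivity) meeting only along that edge and reading off compatibility from the cyclic orders of $\{y,A_x,B_x\}$ and $\{x,A_y,B_y\}$. Your approach sidesteps both the section and the two-cycle computation, which is arguably cleaner; what the cocycle formulation buys is that Borelness is checked by a one-step local calculation rather than an exhaustion over balls, and it makes the orientation twisting of the cover completely explicit.

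One point deserves more care than you gave it. Your Borelness claim for the edge relation cannot quote Lemma \ref{lem:whitcore} directly, since that lemma concerns a single vertex while your condition involves the pair $(x,y)$ of adjacent vertices. You need the analogue: $(\omega_x,\omega_y)$ is jointly induced by a spherical embedding of $\cal G_x$ if and only if it is jointly induced by a spherical embedding of $B_r(x)$ for every $r\geq 2$. This does hold by essentially the same argument as in Lemma \ref{lem:whitcore} --- take $R$ large enough that $B_2(x)$ is contained in a $3$-connected torso $H$ of the Tutte decomposition of the block of $x$ in $B_R(x)$, and apply Whitney's theorem to $H$ to conclude that the pair of rotations at $x$ and $y$ is determined up to simultaneous reversal --- but it should be stated and proved rather than attributed to Lemma \ref{lem:whitcore} by analogy. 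With that extension spelled out, your argument is complete.
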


We construct the extension by exhibiting an explicit Borel cocycle. Let us first prove the following lemma. 

\begin{lemma}\label{prop:rota}
Let $\cal G$ be a locally finite Borel graph with 3-connected planar components on a standard Borel space $X$. Then, there exists a Borel map $\omega\colon X\rightarrow \Cyclic (X)$ such that for every $x\in X$, the cyclic order $\omega(x)$ is a rotation at $x$ induced by a spherical embedding of $\cal G_x$.
\end{lemma}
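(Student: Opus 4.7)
The plan is to invoke Lemma \ref{lem:whitcore} to characterize the set of admissible rotations at each vertex as a countable Borel intersection of simpler Borel conditions, and then extract a Borel selector via Lusin-Novikov.

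Concretely, for each $r \in \NN$ and $x \in X$, let $\Omega_r(x)$ denote the (finite) set of cyclic orders on $N_{\cal G}(x)$ induced by some spherical embedding of the finite graph $B_r(x)$. Sphericity of $B_r(x)$ follows from planarity of $\cal G_x$, so $\Omega_r(x)$ is non-empty; and restricting a spherical embedding of $B_{r+1}(x)$ to $B_r(x)$ shows $\Omega_{r+1}(x) \subseteq \Omega_r(x)$. Hence the nested sequence of finite non-empty sets stabilizes, and $\Omega(x) := \bigcap_{r \geq 1} \Omega_r(x)$ is non-empty. Since $\cal G_x$ is 3-connected, locally finite, and spherical, Lemma \ref{lem:whitcore} identifies $\Omega(x)$ with the set of rotations at $x$ induced by a spherical embedding of the whole component $\cal G_x$.

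I would then argue that the relation $E_r := \{(x,\sigma) \in X \times \Cyclic(X) : \sigma \in \Omega_r(x)\}$ is Borel. Using Lusin-Novikov on $\cal G \subset X\times X$ one obtains Borel enumerations of neighbourhoods, from which the pointed finite graph $B_r(x)$ can be read off as a Borel-measurable function of $x$ valued in a countable Borel space of finite rooted graphs. Whether a prescribed cyclic order at the root extends to a spherical rotation system of a fixed finite rooted graph is a finite combinatorial check (e.g.~by enumerating all candidate rotation systems and testing sphericity via Kuratowski), so each $E_r$ is Borel. Consequently $E := \bigcap_{r \geq 1} E_r = \{(x,\sigma) : \sigma \in \Omega(x)\}$ is Borel with non-empty finite $X$-fibers, and Lusin-Novikov yields a Borel uniformization $\omega\colon X \to \Cyclic(X)$ with $\omega(x) \in \Omega(x)$ for every $x$, which is the required map.

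The only delicate point is the Borel-ness of $E_r$, which amounts to encoding $(B_r(x), x, \sigma)$ as a point in a standard Borel parameter space in a Borel fashion; once that bookkeeping is in place the rest is routine. It is worth emphasizing that here we only demand local validity of $\omega(x)$ at each individual vertex: the much stronger requirement that the family $\{\omega(y)\}_{y\in V(\cal G_x)}$ assemble into a single spherical rotation system on $\cal G_x$ is \emph{not} asked of us at this step, and is precisely the content (and difficulty) of the subsequent Proposition \ref{prop:ext}.
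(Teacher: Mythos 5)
Your proof is correct and takes essentially the same route as the paper: both characterize the admissible rotations at $x$ via Lemma~\ref{lem:whitcore} as the intersection over $r$ of a Borel set of locally valid rotations on $B_r(x)$, then apply Lusin--Novikov to the resulting Borel set with countable (in fact finite, nonempty) sections. The only cosmetic difference is that the paper invokes Imrich's theorem to note the sections have exactly two elements, whereas you argue nonemptiness and finiteness directly from the nested decreasing sequence of finite nonempty sets $\Omega_r(x)$; both are valid.
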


\begin{proof}
For each $r\in \NN$, let $Y_r \subset X \times \Cyclic (X)$ be the Borel subset consisting of those $(x,\omega) \in  X \times \Cyclic (X)$ such that $\omega$ is a rotation at the vertex $x$ of $\cal G$ induced by a spherical embedding of the ball $B_r (x) \subset \cal G$. It follows after Lemma \ref{lem:whitcore} that the Borel set $Y = \bigcap_{r\in \NN} Y_r$ is the Borel subset of $X \times \Cyclic (X)$ consisting of those $(x,\omega)\in  X \times \Cyclic (X)$ such that $\omega$ is induced by a spherical embedding of $\cal G_x$.  

By Imrich's Theorem and connectivity, for each $x\in X$, the section $Y_x \colon=\{\omega \in \Cyclic (X)\colon (x,\omega) \in Y \}$ has exactly two elements. Therefore, $Y$ admits a Borel uniformization by the Lusin-Novikov theorem. Any such uniformization $\omega$ is the map we are looking for.
\end{proof}

\begin{proof}[Proof of Proposition \ref{prop:ext}]

We define a Borel cocycle $c\colon \cal G \rightarrow \mathbb{Z}_2$ by letting $c_{(x,y)} = 0$ for an edge $(x,y)\in \cal G$ if $\om (x)$ and $\om (y)$ are induced by the same spherical embedding of $\cal G_x$ and  $c_{(x,y)} = 1$ otherwise.

We show that $c$ is indeed Borel. To do this we prove that we only need a finite neighbourhood of $(x,y)$ and $\om$ to compute $c_{(x,y)}$. Let $e=(x,y) \in \cal G$. By Menger's Theorem and 3-connectivity of components, the vertices $x$ and $y$ lie in two cycles $A$ and $B$ intersecting exactly at the vertices $x,y$ and the edge $e$. For $\cal G$ being locally finite, such $A$ and $B$ can be chosen in a Borel way. Indeed, the cycles $A$ and $B$ can be taken to be the two minimum elements with respect to a Borel $\NN$-ordering of the set of cycles containing $(x,y)$. Such Borel $\NN$-ordering can be obtained using the cycle length and a Borel linear order on the vertex space. Let $A_x$ and $B_x$ be the vertices in $A$ and $B$ respectively adjacent to $x$ which are not $y$. Similarly, consider $A_y$ and $B_y$. 

Note that, by construction, the cycles $A$ and $B$ are non-crossing in every spherical embedding. Let us suppose that the rotations $\om (x)$ and $\om (y)$ are induced by the same spherical embedding of $\cal G_x$. If $\om (x)$  induces a cyclic order $[y,A_x,B_x]$, or $[y,B_x, A_x]$, then $\om(y)$ must order $[x,B_y,A_y]$, or $[x, A_y, B_y]$ respectively. Indeed, it is easy to see that, otherwise, the cycle $B$ would have vertices on both cyclesides of $A$, implying that $A$ and $B$ cross, what we have noted not to be possible. 

It also follows from the above argument that if $\om (x)$ orders $[y,A_x,B_x]$, or $[y,B_x, A_x]$, and $\om(y)$ orders $[x,A_y,B_y]$, or $[x, B_y, A_y]$ respectively, then  $\om (x)$ and $\om (y)$ cannot be induced by the same spherical embedding. In this latter case, it follows from Imrich's Theorem that it is $\om (x)$ and $\om (y)^-$ that are induced by the same spherical embedding of $\cal G_x$.  Therefore, as claimed, the cocycle $c$ is Borel as the value $c_{(x,y)}$ is decided by comparing the cyclic ordering of $y, A_x$, and $B_x$ induced by $\om (x)$ and the cyclic ordering of $x, A_y$, and $B_y$ induced by $\om (y)$.

The Borel spherical extension $\cal H$ is the Borel graph on $X\times \mathbb Z_2$, where $X$ is the vertex space of $\cal G$, defined by setting an edge between $(x, i)$ and $(y,j)$ if and only if $(x,y) \in \cal G$ and $i = c_{(x,y)} + j \pmod 2$. By construction of the cocycle, the map $\omega$ on $\cal H$ defined by $\om (x, 0) = \om (x)$ and $\om (x,1) = \om (x)^-$ is a Borel spherical rotation.
\end{proof}

\begin{corollary}\label{cor:dosbase}
Let $\cal G$ be a locally finite Borel graph with 3-connected planar components admitting a 2-basis. Then, $\cal G$ admits a Borel 2-basis.
\end{corollary}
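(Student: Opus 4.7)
The plan is to lift to a Borel spherical extension via Proposition \ref{prop:ext}, to read off facial cycles from the resulting Borel rotation system, and to push the resulting 2-basis back down.

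First, apply Proposition \ref{prop:ext} to obtain a 2-to-1 Borel extension $f\colon\cal H\to\cal G$ with $\cal H$ Borel spherical, equipped with a Borel rotation system $\omega$. Each component $H$ of $\cal H$ maps isomorphically via $f$ onto a component of $\cal G$, hence is itself a 3-connected locally finite planar graph admitting a 2-basis.

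Next, I would show that on each component $H$, the desired 2-basis coincides with the set of \emph{facial cycles of $\omega|_H$}, that is, finite cycles $C\subset H$ such that the standard face-tracing procedure driven by $\omega|_H$ and starting from any oriented edge of $C$ traverses exactly $E(C)$ before closing up. To verify this identification, note that since $H$ is 2-connected and admits a 2-basis, Thomassen's Theorem \ref{tma:Thom80} provides an accumulation-free embedding $H\hookrightarrow\RR^2$ whose facial cycles form a 2-basis; by 2-connectivity every such facial boundary is a finite cycle. The one-point compactification yields a spherical embedding whose induced rotation system, by Imrich's Theorem, agrees with $\omega|_H$ up to global reversal. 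Since the set of facial cycles is invariant under rotation reversal (reversal merely swaps the face lying to the left and to the right of each oriented edge), the facial cycles of $\omega|_H$ are exactly those of Thomassen's embedding, and therefore form a 2-basis of $H$.

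Let $\cal B_{\cal H}\subset[\cal H]^{<\infty}$ be the union of these facial-cycle sets over all components; face-tracing is a local, finite check depending only on $\omega$ and the cycle, so $\cal B_{\cal H}$ is Borel. Let $\cal B_{\cal G}$ be the image of $\cal B_{\cal H}$ under the map on finite edge sets induced by $f$. Since $f$ is 2-to-1 on vertices and is a graph isomorphism on each component, the induced map on finite edge sets is countable-to-one, so $\cal B_{\cal G}$ is Borel by the Lusin-Souslin theorem. On each component $G$ of $\cal G$, the restriction of $\cal B_{\cal G}$ is the image under a graph isomorphism of the 2-basis of any component of $\cal H$ lying over $G$, and so is itself a 2-basis of $G$. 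This yields the required Borel 2-basis.

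The main anticipated difficulty is ensuring that the Borel rotation system produced by Proposition \ref{prop:ext} really picks out Thomassen's facial cycles as opposed to some other combinatorially defined set of cycles that need not yield a 2-basis. This is resolved cleanly by Imrich's Theorem: for a 3-connected locally finite planar graph the spherical rotation system is unique up to reversal, so there is no choice to disambiguate, and the facial cycles of $\omega|_H$ are forced to agree with those of any accumulation-free planar embedding of $H$.
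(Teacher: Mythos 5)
Your proof is correct and takes essentially the same route as the paper: pass to the 2-to-1 Borel spherical extension of Proposition \ref{prop:ext}, read off the facial cycles from the Borel rotation system (well-defined up to reversal by Imrich's theorem), and push forward along the extension map. You supply more detail than the paper does -- in particular the face-tracing mechanism, the identification with Thomassen's facial cycles, the reversal-invariance remark, and the Lusin--Souslin argument for Borelness of the pushforward -- but the underlying argument is the same.
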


\begin{proof}
Let $\cal H$ be the auxiliary extension of the previous proposition. We may compute a Borel 2-basis $\cal B$ for $\cal H$ as those facial cycles of the embeddings specified by the Borel rotation system. By Imrich's theorem, the facial cycles are independent of the embedding. The image of $\cal B$ under the extension map is hence a Borel 2-basis for $\cal G$.
\end{proof}

\begin{proof}[Proof of Theorem \ref{tma:one-end-plan}]
Let $\cal G$ be a Borel graph with one-ended planar components. The torsos of its Tutte decomposition as in Example \ref{ex:Tuttemes} are also planar by Lemma \ref{lem:margullo}. These torsos are at most one-ended by an argument along the lines of that in Theorem \ref{tma:Faccessible} and locally finite by an argument as in \cite[Proposition 4.2]{Thom93}. Hence, by Theorem \ref{tma:maintech} we may assume without loss of generality that $\cal G$ has one-ended 3-connected planar components. However, planar one-ended graphs admit a 2-basis by Thomassen's Theorem, as they admit an embedding in $\cal S_1$ with a single accumulation point. Therefore, by Corollary \ref{cor:dosbase} the Borel graph $\cal G$ admits a Borel 2-basis and is thus measure treeable by \cite[Theorem 3.6]{CGMT}.
\end{proof}

Observe that there are examples of $\cal P_1$-accessible Borel graphs -- actually graphings -- whose components are not accessible. The author is not aware of a Borel graph with planar components which is not $\cal P_1$-accessible.

Note that this example provides an example of unimodular random graph which is planar but not accessible almost surely. This example lies in contrast with the quasi-transitive setting, where planar graphs are known to be accessible \cite{Ham18}. 

\begin{example}\label{ex:freepakosanz}
Let $\FF_2 = \left< a, b\right>$ denote the free group on two generators. Consider a p.m.p.~ergodic free action of $\FF_2$ on a standard probability space $(X, \mu)$ with a Borel partition $X = \bigsqcup_{n\geq 4} X_n$ such that $b \cdot X_n = X_n$ with $0 < \mu (X_n) < 1$ for every $n \geq 4$, and $\sum_{n \geq 4} n \mu (X_n) < \infty$. Such $X$ may be constructed from a p.m.p.~ergodic free action $\FF_2 \actson (Y,\nu)$  by considering inverse limits of diagonal actions $\FF_2 \actson (Y \times \ZZ_n , \nu \otimes \rho_n)$ where $\rho_n$ is the normalized counting measure on $\ZZ_n$ and $\FF_2 \actson \ZZ_n$ is defined by $a \cdot i = i+1$ and $b \cdot i = i$ for every $ i \in \ZZ_n$.

We let $\tilde X = \bigsqcup_{n \geq 4} X_n \times \ZZ_n$ be equipped with the Borel finite measure $\tilde \mu = \sum_{n \geq 4} \mu|_{X_n} \otimes \lambda_n$, where $\lambda_n$ is the counting measure on $\ZZ_n$. For each $x \in X$, we let $n_x$ denote the unique $n\in \NN$ such that $x \in X_n$. Consider the Borel graph $\cal G$ given by the edges
\begin{itemize}
\item $(x,i) \sim (b \cdot x, i)$ for every $x \in X$ and $i \in \ZZ_{n_x}$,

\item $(x,i) \sim (x, i+1)$ for every $x \in X$ and $i \in \ZZ_{n_x}$, 

\item $(x,j) \sim (a\cdot x, n_{a\cdot x} -j)$ for every $x \in X$ and $j \in \{1,2\}$, and

\item $(x,j) \sim (b^{-1} a b \cdot x , n_{b^{-1} a b\cdot x}-j)$ for every $x \in X$ and $j \in \{1,2\}$,
\end{itemize}
where all the sums in the indices are considered $\pmod n_x$. By construction, the finite measure $\tilde\mu$ is preserved by $\cal G$, so the pair $(\cal G, \tilde \mu / \tilde \mu (\tilde X))$ is a graphing. 

Components of $\cal G$ are spherical. To prove this let us start by denoting by $U_x$, for each $x \in X$, the subgraph of $\cal G$ defined by $\cal G [\{(b^n\cdot x,j): n \in \ZZ, j \in \ZZ_{n(x)}\}]$. If $n(x) = 4$, the graph $U_x$ is the graph in Example \ref{ex:miticu1}. We also have that $U_x =U_{b\cdot x}$. To embed a given component in $\RR^2$, one may start by choosing some $x \in X$ and embedding $U_x$ in $\RR^2 $. Graphs of the form $U_{ab^n \cdot x}$ for any $n \in \ZZ$ are then embedded in faces of $U_x$. Proceeding iteratively, we obtain an embedding in $\RR^2$ of the component of $x$.

Each such graph $U_x$ contains two ends of its connected component. It is easy to see that, in order to separate these two ends, one needs to remove at least $n_x$ vertices, what can be attained by removing one of the $\ZZ_{n_x}$ cycles. Since $n_x$ is unbounded on each component, by ergodicity, it follows that components of $\cal G$ are not accessible a.s. 
\end{example}

\section{Property (T) equivalence relations have no planar graphings}

In this last section we prove approximability of p.m.p.~countable Borel equivalence relations admitting a multi-ended graphing. In particular, this implies that p.m.p. countable Borel equivalence relations with measure property (T) admit no graphing with planar components almost surely. This is Theorem \ref{tma:noT}.

\begin{prop}\label{prop:multi}
Let $(R, \mu)$ be a p.m.p.~countable Borel equivalence relation with a multi-ended a.s.~graphing $\cal G$. Then $R$ is approximable.
\end{prop}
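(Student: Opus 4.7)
The plan is to build an increasing sequence $(R_n)_{n\in\NN}$ of Borel subequivalence relations of $R$ with $\bigcup_n R_n = R$ and the strict approximability property, using the infinite family of nested finite cuts provided by multi-endedness.

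First, running the Dicks--Dunwoody-type inductive construction used in the proof of Theorem \ref{tma:acces}, I would construct a Borel nested separation system $\cal S\subset \Sep(\cal G)$ with property $(\ast)$ whose Boolean-ring image $\Psi(\cal S_{\cal G_x})$ is a family of thin elements generating $\cal B\cal G_x$ in a.e.\ component. Since $\cal G$ is almost surely multi-ended, $\cal B\cal G_x\neq\{0,1\}$ a.s., so $\cal S_{\cal G_x}$ is an infinite family and the tree $\cal T_{\cal S_{\cal G_x}}$ of the induced Borel tree decomposition has infinitely many vertices almost surely. Using Proposition \ref{prop:locfin} (and restricting the cut-order at each inductive step as in Theorem \ref{tma:acces}), I arrange that each $x\in X$ lies in only finitely many parts $V_t$, so the projection $p\colon Z\to X$ from Section 3 is finite-to-one a.s. A Borel linear order on $X$ induces, in each component, a Borel $\NN$-enumeration $(t_k^{(x)})_{k\in\NN}$ of the vertices of $\cal T_{\cal S_{\cal G_x}}$.

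Next, define the Borel subequivalence relation $R_n$ of $R$ by $x\sim_{R_n} y$ iff $x\sim_R y$ and there exist parts $V_s\ni x$ and $V_t\ni y$ in $\cal T_{\cal S_{\cal G_x}}$ joined by a tree-path whose vertices all lie among the first $n$ vertices $t_0^{(x)},\dots,t_{n-1}^{(x)}$ of the enumeration. Equivalently, $R_n$ is the image under $p$ of the Borel equivalence relation on $Z$ generated by $Q$ and the edges $\cal W$ of Theorem \ref{tma:maintech} restricted to the fibres $\{(x,t_i^{(x)}):i<n\}$. The $R_n$ are Borel, increasing in $n$, and $\bigcup_n R_n = R$ since any $\cal G$-path between two $R$-equivalent points visits only finitely many parts.

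Finally, I verify the strict approximability condition: if $A\subset X$ is Borel with $R_n|_A=R|_A$ for some $n$, then $\mu(A)=0$. The assumption forces that for almost every $R$-orbit $O$ meeting $A$, all $R$-equivalences between points of $O\cap A$ are already witnessed within the parts indexed by $\{t_0^{(x)},\dots,t_{n-1}^{(x)}\}$. Since the $R_n$-classes are contained in a uniformly bounded union of parts while the trees $\cal T_{\cal S_{\cal G_x}}$ are a.s.~infinite, a Mass Transport argument on the Borel tree $\cal T_{\cal S}$ --- transporting from each $x\in A$ to the (finitely many) parts meeting its $R_n$-class, and exploiting $R$-invariance of $\mu$ to compare with the unbounded number of parts on each orbit --- forces the total mass of $A$ to vanish. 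The main obstacle will be (i) carrying out the Dicks--Dunwoody-style construction so that $\cal S$ yields an infinite tree with only finitely many parts through each vertex in a Borel $R$-invariant manner, and (ii) executing the Mass Transport estimate rigorously, as the enumeration $(t_k^{(x)})_{k\in\NN}$ need not itself be $R$-invariant and must be averaged over $R$-orbits.
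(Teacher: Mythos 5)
Your proposal has the right overall shape---a Dicks--Dunwoody-style Borel nested separation system, a tree decomposition, and an increasing sequence $(R_n)$ of Borel subequivalence relations defined by restricting to a shrinking set of tree levels---and this is indeed the strategy of the paper's proof. However, there are two genuine gaps.

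First, the $R$-invariant enumeration $(t_k^{(x)})_{k\in\NN}$ of the vertices of $\cal T_{\cal S_{\cal G_x}}$ that your definition of $R_n$ requires does not exist in general. The tree $\cal T_{\cal S_{\cal G_x}}$ depends only on the $R$-class, so an $R$-invariant enumeration would, by choosing $t_0^{(x)}$, give a Borel transversal for the equivalence relation of ``same component'' on the separations, which is Borel bi-reducible with $R$ itself. For non-smooth $R$ this is impossible. You flag this at the end, but ``averaging over $R$-orbits'' cannot repair the fact that $R_n$ as written is not a well-defined Borel equivalence relation. The paper sidesteps this precisely by invoking the Marker Lemma: it produces a decreasing sequence $\cal S_n \supset \cal S_{n+1}$ of Borel complete sections of $\cal S$ with $\bigcap_n \cal S_n = \emptyset$, and defines $x\sim_{R_n} y$ iff both lie in the same (unique) part of the tree decomposition induced by $\cal S_n$. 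This gives the increasing exhaustion in a Borel $R$-invariant way without ever picking a canonical enumeration.

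Second, the strict-approximability estimate is not actually carried out. Invoking ``a Mass Transport argument on $\cal T_{\cal S}$'' is too vague to constitute a proof, and in fact your heuristic (``$R_n$-classes are contained in a uniformly bounded union of parts'') doesn't force $\mu(A)=0$ on its own, since parts may be infinite and a bounded union of parts can carry positive measure. The paper gives an explicit argument: if $R_n|_A=R|_A$, then $A$ selects a unique part per component, bounded by some separations $\cal P\subset\cal S_n$. For each $(S,B)\in\cal P$ one picks a vertex $u_{(S,B)}\in S$ and, using the fact that each chosen separation separates two \emph{ends} (so both sides are infinite), an infinite ray $R_{(S,B)}$ into the side away from the selected part, and these rays have pairwise disjoint tails. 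It is this concrete Borel family of disjoint rays, together with $R$-invariance of $\mu$, that forces $\mu(A)=0$. A minor point: your construction tries to make $\Psi(\cal S_{\cal G_x})$ generate all of $\cal B\cal G_x$, which is stronger than needed and may be unattainable without accessibility; the paper only runs the Dicks--Dunwoody construction up to the least order $f(x)$ of a cut separating some pair of ends in the component, which is always finite when the component is multi-ended, and only requires that every separation of $\cal S$ separate two ends.
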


\begin{proof}
Let $X$ denote the vertex space of $\cal G$. Arguing along the lines of the proof of Theorem \ref{tma:acces} we may construct a $\cal G$-invariant Borel function $f\colon X\rightarrow \NN$ such that $f(x)$ is the least number of edges required to separate two ends in the component of $x$. We have that $f(x)$ is well-defined a.s.~for $\cal G$ being multi-ended a.s.

Again using Dicks-Dunwoody's argument \cite[Section II.2]{DD} as in the proof of Theorem \ref{tma:acces}, we find a nested Borel separation system $\cal S$ such that, possibly upon passing to a Borel subset, every separation of $\cal S$ separates two ends of its component.
By thinness and an argument along the lines of Proposition \ref{prop:locfin} we have that, moreover, every $x\in X$ lies in at most finitely many adhesion sets of separations in $\cal S$. If required, by a measure exhaustion argument we may assume that $\cal S$ has elements in almost every orbit.

Using the Marker Lemma \cite[Lemma 6.7]{KM}, we find a sequence $\cal S_n$ of Borel subsets of $\cal S$ with elements in almost every orbit such that $\cal S_n \supset \cal S_{n+1}$ for every $n\in \NN$ and $\bigcap_{n\in \NN} \cal S_n = \emptyset$. We let $R_n \leq R$ be the Borel subequivalence relation defined on $X$ by $x \sim_{R_n} y$ if and only if both $x$ and $y$ lie in a unique part of the tree decomposition induced by $\cal S_n$ in their component. 

By Lemma \ref{prop:locfin}, and for every vertex lying in finitely many adhesion sets of separations of $\cal S$, we have that, given any $x \sim_{\Rel (\cal G)} y$, then $x\sim_{R_n} y$ for $n$ sufficiently large. Hence, to conclude the proof it suffices to show that for every $R_n$ and Borel subset $A\subset X$ such that $\Rel (\cal G)|_A = R_n|_A$ we have that $\mu (A) = 0$. 

The Borel subset $A$ selects a unique part of the tree decomposition induced by $\cal S_n$ in each component intersected by $A$. Let $\cal P \subset \cal S_n$ consist of those separations defining a part selected by $A$. For each $(S,B) \in \cal P$, we may choose a vertex $u_{(S,B)} \in S$ using a Borel linear order. Using again a Borel linear order and its induced lexicographic order on paths, we choose in a Borel way a ray $R_{(S,B)}$ with one initial vertex $u_{(S,B)}$ such that $R_{(S,B)}$ a tail of $R_{(S,B)}$ is entirely contained in the side of $(S,B)$ not containing the part selected by $A$ in its component. 

By the definition of tree decomposition, the rays $R_{(S,B)}$ have disjoint tails for each $(S,B) \in \cal P$. Moreover, these rays are indeed infinite for our assumption that each $(S,B)$ separates at least two ends, so has two infinite sides. The existence of the $P_{(S,B)}$ implies that $\mu (A) = 0$ for $\mu$ being preserved by $R$, concluding the proof.
\end{proof}

The above Proposition implies, by \cite{pichot}, the following generalization of the well-known fact that countable groups with property (T) are at most one-ended.

\begin{prop}
Let $(R,\mu)$ be a p.m.p.~Borel equivalence relation with measured property (T). Then $R$ does not admit an a.s.~multi-ended graphing.
\end{prop}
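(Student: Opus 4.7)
The plan is to derive the statement directly from Proposition \ref{prop:multi} together with the definition of measured property (T), so the argument is essentially a one-step contrapositive.

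First I would suppose for contradiction that $R$ admits a graphing $\cal G$ whose components are multi-ended on an $R$-invariant set of full $\mu$-measure. Applying Proposition \ref{prop:multi} to $(R,\mu)$ with graphing $\cal G$, I obtain that $R$ is approximable, i.e.\ there is an increasing sequence of Borel subequivalence relations $R_n \leq R$ with $R = \bigcup_n R_n$ and such that $R_n|_A = R|_A$ on a Borel set $A$ forces $\mu(A)=0$.

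Next I would invoke the definition of measured property (T): no p.m.p.~extension of $(R,\mu)$ is approximable. Since $(R,\mu)$ is trivially a p.m.p.~extension of itself via the identity map on the vertex space, measured property (T) implies in particular that $(R,\mu)$ is itself not approximable. This contradicts the conclusion obtained from Proposition \ref{prop:multi}, and the contradiction finishes the proof.

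There is no real obstacle here: the content has already been absorbed into Proposition \ref{prop:multi} (multi-ended graphings yield approximating sequences via a nested Borel separation system and the Marker Lemma) and into the equivalence, discussed in Section 2.6, between the definition of measured property (T) used here and that of Moore--Zimmer. The only mild point worth flagging in the write-up is that one applies the extension-based definition of property (T) to the trivial self-extension, so that the a priori stronger assumption on extensions immediately rules out approximability of $R$ itself.
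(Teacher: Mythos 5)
Your proposal is correct and follows essentially the same route as the paper: apply Proposition \ref{prop:multi} to conclude approximability, then contradict measured property (T). The only cosmetic difference is that the paper cites Pichot's thesis for non-approximability of property (T) relations, whereas you derive it directly from the extension-based definition via the trivial self-extension; both are immediate given the definition in Section 2.
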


This proposition implies the desired conclusion.

\begin{theorem}\label{tma:noT}
Let $(R,\mu)$ be a p.m.p.~Borel equivalence relation with measured property (T). Then $R$ does not admit a locally finite graphing with planar components a.s.
\end{theorem}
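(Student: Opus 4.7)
My approach is to argue by contradiction. I would assume that $(R,\mu)$ has measured property (T) and admits a locally finite graphing $\cal G$ with planar components on a conull $R$-invariant Borel set. After restricting to this invariant set, and after discarding the smooth finite-class part (which plays no role for property (T)), I may assume that every $R$-class is infinite and that every $\cal G$-component is infinite and planar everywhere.

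The first key step is to apply the proposition immediately preceding this theorem, which states that a p.m.p.\ equivalence relation with measured property (T) admits no a.s.\ multi-ended graphing. Applied to $\cal G$, this forces every $\cal G$-component to have at most one end almost surely, so in fact exactly one end by infiniteness. I would then invoke Theorem \ref{tma:one-end-plan} to deduce that $R$ is $\mu$-treeable, obtaining a Borel acyclic subgraph $\cal T \subset R$ with $\Rel(\cal T) = R$ on a conull $R$-invariant subset.

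Now $\cal T$ is itself a locally finite graphing of $R$, so the preceding proposition can be applied once more, this time to $\cal T$. This gives that $\cal T$-components are also one-ended almost surely. Since $\cal T$ is acyclic and locally finite, all of its ends are automatically thin (any two rays in the same end of a tree eventually share a common tail, so no end can contain infinitely many disjoint rays); hence Corollary \ref{cor:hyper} applies and shows that $R$ is hyperfinite. But an aperiodic hyperfinite p.m.p.\ equivalence relation is approximable -- indeed the defining increasing chain of finite Borel subequivalence relations witnesses this directly from the definition given in the preliminaries -- and this contradicts the measured property (T) hypothesis, which explicitly precludes approximability of $R$ itself (as $R$ is trivially a p.m.p.\ extension of itself).

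The main obstacle in this plan is the double application of the preceding proposition: one must verify that passing from $\cal G$ to the treeing $\cal T$ guaranteed by Theorem \ref{tma:one-end-plan} preserves enough structure (namely, that $\cal T$ is actually a locally finite graphing of $R$ on a conull invariant set, not merely a graphing of some restriction) so that the proposition can be reapplied without losing the almost-sure hypotheses. Once this is checked, the remaining verifications -- that acyclicity rules out thick ends, and that hyperfiniteness implies approximability -- are routine from the preliminaries.
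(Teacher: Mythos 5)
Your first two steps --- using the preceding proposition to force one-endedness of $\cal G$ almost surely, then Theorem \ref{tma:one-end-plan} to get $\mu$-treeability of $R$ --- coincide exactly with the paper's argument. At that point the paper is already finished: it appeals to the fact recorded in the preliminaries (attributed to Pichot's thesis) that treeable p.m.p.\ countable Borel equivalence relations are approximable, and since $R$ is an extension of itself, $\mu$-treeability immediately contradicts measured property (T). Your proof instead continues with a detour: a second application of the preceding proposition, this time to the treeing $\cal T$, followed by thinness of tree ends and Corollary \ref{cor:hyper} to obtain hyperfiniteness and hence approximability.

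The detour contains the gap you flag as ``the main obstacle'' but do not close. Both the preceding proposition --- whose proof runs entirely within the Section 3 separation-system machinery and invokes Proposition \ref{prop:locfin}, all of which presuppose local finiteness --- and Corollary \ref{cor:hyper} are stated for \emph{locally finite} Borel graphs. Theorem \ref{tma:one-end-plan} only provides $\mu$-treeability, that is, an acyclic Borel graph $\cal T$ with $\Rel(\cal T)=R$ off a null set, with no control whatsoever on the vertex degrees of $\cal T$. A Borel treeing of a p.m.p.\ relation need not be locally finite; and the hypothesis that $\cal G$ is locally finite does not by itself produce a locally finite treeing of $\Rel(\cal G)$ (you would need finite expected degree, or equivalently finite cost, which is not automatic). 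Until a locally finite treeing is supplied, the second invocation of the proposition and of Corollary \ref{cor:hyper} is not licensed, so the argument does not go through as written. The remaining observations you list --- tree ends are thin, and aperiodic hyperfinite p.m.p.\ relations are approximable --- are indeed routine, but they rest on the missing step. The paper's shorter route avoids the issue entirely.
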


\begin{proof}
If $R$ admitted a locally finite graphing with planar components a.s., then such components would be one-ended by the previous proposition. However, then $R$ would be $\mu$-treeable by Theorem \ref{tma:one-end-plan}, so $(R,\mu)$ cannot have property (T). 
\end{proof}

\section*{Acknowledgements}

The author would like to thank his advisor \L{}ukasz Grabowski for his guidance and advice. The author would also like to thank Anush Tserunyan for her valuable comments on some constructions in the paper, and Robin Tucker-Drob for pointing out Dicks-Dunwoody's argument.

\begin{bibdiv}
\begin{biblist}*{labels={alphabetic}}

\bib{Adams90}{article}{
title={Trees and amenable equivalence relations}, volume={10}, 
number={1}, 
journal={Ergodic Theory and Dynamical Systems}, publisher={Cambridge University Press}, 
author={Adams, S.}, year={1990}, pages={1--14}}

\bib{AL}{article}{
author = {Aldous, D.},
author = {Lyons, R.},
title = {{Processes on Unimodular Random Networks}},
volume = {12},
journal = {Electronic Journal of Probability},
publisher = {Institute of Mathematical Statistics and Bernoulli Society},
pages = {1454 -- 1508},
year = {2007}
}

%
%

\bib{Car}{article}{
author = {Carmesin, J.},
year = {2019},
title={All Graphs Have Tree-Decompositions Displaying Their Topological Ends},
journal = {Combinatorica},
pages = {545--596},
volume =  {39},
issue = {3}
}

\bib{CDHS}{article}{
author = {Carmesin, J.},
author = {Diestel, R.},
author = {Hundertmark, F.},
author = {Stein, M.},
year = {2014},
title={Connectivity and tree structure in finite graphs},
journal = {Combinatorica},
pages = {11--46},
volume =  {34},
issue = {1}
}

\bib{CHM}{misc}{
      title={Canonical trees of tree-decompositions}, 
      author={Carmesin, J.},
      author={Hamann, M.},
      author={Miraftab, B.},
      year={2020},
      eprint={2002.12030
},
      archivePrefix={arXiv},
      primaryClass={math.CO},
      note={available at \url{https://arxiv.org/abs/2002.12030
}}
}

\bib{CGMT}{misc}{
      title={One-ended spanning subforests and treeability of groups}, 
      author={Conley, C. T.},
      author={Gaboriau, D.},
      author={Marks, A. S.},
      author={Tucker-Drob, R. D.},
      year={2021},
      eprint={2104.07431},
      archivePrefix={arXiv},
      primaryClass={math.GR},
      note={available at \url{https://arxiv.org/abs/2104.07431}}
}

\bib{CMTD}{article}{
 title={Brooks' Theorem for measurable colorings}, 
 volume={4}, 
 journal={Forum of Mathematics, Sigma},  
 author={Conley, C. T.},
 author={Marks, A. S.},
 author={Tucker-Drob, R. D.}, 
  year={2016}, 
  pages={e16}
  }

\bib{DD}{book}{
  author = {Dicks, W.},
  author = {Dunwoody, M. J.} 
  publisher = {Cambridge University Press},
  title = {Groups acting on graphs},
  series = {Cambridge studies in advanced mathematics},
  volume ={17},
  year = {1989}
}

\bib{Diestel05}{book}{
  author = {Diestel, R.}, 
  howpublished = {Hardcover},
  isbn = {3540261826},
  month = {August},
  priority = {2},
  publisher = {Springer},
  title = {Graph Theory},
  series = {Graduate Texts in Mathematics},
  volume ={173},
  year = {2005}
}

\bib{Dir}{article}{
title = {A Theorem of Kuratowski},
journal = {Indagationes Mathematicae (Proceedings)},
volume = {57},
pages = {343-348},
year = {1954},
issn = {1385-7258},
author = {Dirac, G. A.} 
author = {Schuster, S.}
}

\bib{DJK}{article}{
title = {The structure of hyperfinite Borel equivalence relations},
journal = {Transactions of the American Mathematical Society},
volume = {341},
issue = {1},
pages = {193--225},
year = {1994},
author = {Dougherty, R.},
author = {Jackson, S.},
author = {Kechris, A. S.}
}

\bib{Dro06}{article}{
title = {Infinite-ended groups with planar Cayley graphs},
author = {Droms, C.},
year = {2006},
volume = {9},
pages = {487--496},
journal = {Journal of Group Theory }
}

   \bib{DSS}{article}{
title = {The Structure of Locally Finite Two-Connected Graphs},
author = {Droms, C.},
author = {Servatius, E.},
author = {Servatius, H.},
year = {1995},
volume = {2},
pages = {R17},
journal = {The Electronic Journal of Combinatorics}
}

\bib{Dun85}{article}{
title = {The accessibility of finitely presented groups},
author = {Dunwoody, M. J.},
year = {1985},
volume = {81},
pages = {449--457},
journal = {Inventiones Mathematicae}
}
 

\bib{EL}{article}{
title = {Sofic equivalence relations},
author = {Elek, G.},
author = {Lippner, G.},
year = {2010},
volume = {258},
pages = {1692--1708},
journal = {Journal of Functional Analysis},
number = {5}
}

\bib{Gab}{article}{
author = { Gaboriau, D.},
year = {2000},
title = {Coût des relations d’équivalence et des groupes},
journal = {Inventiones Mathematicae},
pages = {41--98},
volume = {139},
issue = {1}
}

\bib{Gross}{book}{
    AUTHOR = {Gross, J. L.},
    author = {Tucker, T. W.},
     TITLE = {Topological graph theory},
    SERIES = {Wiley-Interscience Series in Discrete Mathematics and
              Optimization},
 PUBLISHER = {John Wiley \& Sons Inc.},
   ADDRESS = {New York},
      YEAR = {1987},
}

\bib{Hal}{article}{
author = {Halin, R.},
title = {Über die Maximalzahl fremder unendlicher Wege in Graphen},
journal = {Mathematische Nachrichten},
volume = {30},
year = {1965},
pages = {63--85}
}

\bib{Ham18}{article}{
author = { Hamann, M.},
year = {2018},
title = {Accessibility in transitive graphs},
journal = {Combinatorica},
pages = {847--859},
volume = {38},
}


\bib{Imr}{inproceedings}{
title = {On Whitney's theorem on the unique embeddability of 3-connected planar graphs},
author = {Imrich, W.},
year = {1975},
booktitle = {Recent advances in graph theory ({P}roc. {S}econd {C}zechoslovak {S}ympos., {P}rague, 1974)},
pages = {303--306. (loose errata)},
publisher = {Academia},
address = {Prague},
pubstate = {published},
}

\bib{JKL}{article}{
title = {Countable Borel equivalence relations},
journal = {Journal of Mathematical Logic},
volume = {2},
issue = {1},
pages = {1--80},
year = {2002},
author = {Jackson, S.},
author = {Kechris, A. S.},
author = {Louveau, A.}
}

\bib{kech}{book}{
  title={Classical Descriptive Set Theory},
  author={Kechris, A. S.},
  isbn={9781461241904},
  series={Graduate Texts in Mathematics},
  year={2012},
  publisher={Springer New York}
}

\bib{KM}{book}{
  title={Topics in Orbit Equivalence},
  author={Kechris, A. S.},
  author={Miller, B. D.},
  number={n.{\textordmasculine} 1852},
  isbn={9783540226031},
  lccn={2004109415},
  series={Lecture Notes in Mathematics},
  year={2004},
  publisher={Springer}
}


\bib{Kur}{article}{
author = {Kuratowski, C.},
journal = {Fundamenta Mathematicae},
number = {1},
pages = {271--283},
title = {Sur le problème des courbes gauches en Topologie},
volume = {15},
year = {1930},
}


\bib{Mil}{article}{
author = {Miller, B. D.},
journal = {Israel Journal of Mathematics},
pages = {375--392},
title = {Ends of graphed equivalence relations, I},
volume = {169},
year = {2009},
}

\bib{Moo}{article}{
author = {Moore, C. C.},
title = {Ergodic theory and von Neumann algebras},
year = {1982},
booktitle = {Operator algebras and applications, Part 2},
pages = {179--226},
publisher = {American Mathematical Society},
address = {Providence, RI},
}


\bib{pichot}{thesis}{
  title={Quasi-p{\'e}riodicit{\'e} et th{\'e}orie de la mesure},
  author={Pichot, M.},
  year={2005},
}

\bib{Rich04}{article}{
title = {Decomposing infinite 2-connected graphs into
3-connected components},
author = {Richter, R. B.},
year = {2004},
volume = {11},
pages = {R25},
journal = {The Electronic Journal of Combinatorics}
}

\bib{RST}{article}{
title={Excluding infinite minors},
author={Robertson, N.},
author={Seymour, P.},
author={Thomas, R.},
journal={Discrete Mathematics},
year={1991},
volume={95},
pages={303--319}
}

\bib{Sta71}{book}{
title = {Group theory and three--dimensional manifolds},
author = {Stallings, J. R.},
year = {1971},
publisher = {Yale Mathematical Monographs},
volume = {4},
address = {Yale University Press, New Haven, Conn.-London},
}

\bib{Thom80}{article}{
title = {Planarity and Duality of Finite and Infinite Graphs},
author = {Thomassen, C.},
year = {1980},
volume = {20},
pages = {244--271},
journal = {Journal of Combinatorial Theory, Series B}
}

\bib{Thom89}{article}{
title = {Configurations in Graphs of Large Minimum Degree, Connectivity, or Chromatic Number},
author = {Thomassen, C.},
year = {1989},
volume = {555},
pages = {402--412},
journal = {Annals of the New York Academy of Sciences}
}

\bib{Timar}{misc}{
      title={Unimodular random one-ended planar graphs are sofic}, 
      author={Timar, A.},
      year={2022},
      eprint={1910.01307},
      archivePrefix={arXiv},
      primaryClass={math.PR},
      note={available at \url{https://arxiv.org/abs/1910.01307}}
}

\bib{Tse}{inproceedings}{
title = {A descriptive construction of trees and Stallings' theorem},
author = {Tserunyan, A.},
year = {2020},
booktitle = {Trends in set theory (Contemporary Mathematics 752)},
pages = {191--207},
publisher = {American Mathematical Society},
address = {Providence, RI},
}

\bib{Tutte}{book}{
author = {Tutte, W. T.},
title = {Connectivity in Graphs},
year = {1966},
publisher = {University of Toronto Press},
series = {Mathematical Expositions, No. 15},
}

\bib{Thom93}{article}{
title = {Vertex-Transitive Graphs and Accessibility},
author = {Thomassen, C.},
author = {Woess, W.}
year = {1993},
volume = {58},
pages = {248--268},
journal = {Journal of Combinatorial Theory, Series B}
}

\bib{Whit}{article}{
 author = {Whitney, H.},
 journal = {American Journal of Mathematics},
 number = {1},
 pages = {245--254},
 title = {2-Isomorphic Graphs},
 volume = {55},
 year = {1933}
}

\bib{Zim}{book}{
title = {Ergodic theory and semi-simple groups},
author = {Zimmer, R.},
year = {1984},
publisher = {Birkhäuser Verlag},
address = {Basel},
}

\end{biblist}
\end{bibdiv}

\end{document}